\documentclass[11pt,oneside,english]{article}
\usepackage{amsmath, amsthm, amssymb, amsfonts, enumerate, enumitem, xspace, mathtools}
\usepackage[T1]{fontenc}
\usepackage[latin9]{inputenc}
\usepackage{geometry}
\geometry{verbose,tmargin=2.5cm,bmargin=2.5cm,lmargin=2.5cm,rmargin=2.5cm}
\pagestyle{plain}
\usepackage{amstext}
\usepackage{amsthm}
\usepackage{amsmath,amsfonts,amsthm,amssymb,mathrsfs,dsfont} % Math packages
\usepackage{mathtools}
\usepackage{graphicx}
\usepackage{xcolor}
\usepackage{bbm}
\usepackage[colorinlistoftodos,textsize=scriptsize,backgroundcolor=orange!20]{todonotes}
\usepackage[colorlinks=true, allcolors=blue]{hyperref}
\usepackage[capitalize,nameinlink]{cleveref}
\usepackage{verbatim}
\allowdisplaybreaks

\usepackage{setspace}
%\onehalfspace

\makeatletter
%%%%%%%%%%%%%%%%%%%%%%%%%%%%%% Textclass specific LaTeX commands.
% environments
\newtheorem{theorem}{Theorem}[section]

\newcommand{\theoremname}{testing}

\newtheorem{lemma}[theorem]{Lemma}

\newtheorem{claim}[theorem]{Claim}

\newtheorem{proposition}[theorem]{Proposition}

\newtheorem*{question*}{Question}

\theoremstyle{definition}
\newtheorem{definition}[theorem]{Definition}

\newtheorem{remark}[theorem]{Remark}

\theoremstyle{plain}

\DeclareFontFamily{U}{mathc}{}
\DeclareFontShape{U}{mathc}{m}{it}%
{<->s*[1.03] mathc10}{}
\DeclareMathAlphabet{\mathscr}{U}{mathc}{m}{it}

\def \EE {\mathcal{E}}

\def \cZ {\mathcal{Z}}

\newcommand{\Rka}{R_k^{\alpha}}

\DeclareMathOperator{\supp}{supp}

\makeatother

\usepackage{babel}

\providecommand{\theoremname}{Theorem}

\DeclareMathOperator{\Span}{span}
\DeclareMathOperator{\Bad}{\boldsymbol{B}}
\DeclareMathOperator{\M}{\mathcal{M}}

\title{On the counting problem in inverse Littlewood--Offord theory}
\author{Asaf Ferber 
\thanks{Massachusetts Institute of Technology. Department of Mathematics. Email: {\tt ferbera@mit.edu}.}
\and Vishesh Jain \thanks{Massachusetts Institute of Technology. Department of Mathematics. Email: {\tt visheshj@mit.edu}. %Research is partially supported by NSF CCF 1665252,  NSF DMS-1737944 and ONR N00014-17-1-2598.
} \and Kyle Luh \thanks{Harvard University, Center of Mathematical Sciences and Applications.  Email: {\tt kluh@cmsa.fas.harvard.edu}.  %Research is partially supported by NSF postdoctoral fellowship DMS-1702533.
} \and Wojciech Samotij\thanks{School of Mathematical Sciences, Tel Aviv University, Tel Aviv 6997801, Israel. Email: {\tt samotij@tauex.tau.ac.il}. %Research is partially supported by grants 1147/14 and 1145/18 from the Israel Science Foundation.
}}
\date{}

% Manuscript comments

\begin{document}
\maketitle
\global\long\def\R{\mathbb{R}}

\global\long\def\S{\mathcal{R}}

\global\long\def\Z{\mathbb{Z}}

\global\long\def\C{\mathbb{C}}

\global\long\def\Q{\mathbb{Q}}

\global\long\def\N{\mathbb{N}}

\global\long\def\P{\Pr}

\global\long\def\F{\mathbb{F}}

\global\long\def\U{\mathcal{U}}

\global\long\def\V{\mathcal{V}}

\global\long\def\E{\mathbb{E}}

\global\long\def\rk{\text{rank}}

\global\long\def\A{\mathcal{A}}

\global\long\def\L{\mathcal{L}}

\global\long\def\QQ{\mathcal{Q}}

\def \a {\alpha}
\def \b {\beta}
\def \g {\gamma}
\def \e {\varepsilon}
\def \d {\delta}
\def \l {\lambda}

\begin{abstract}
  Let $\epsilon_1, \dotsc, \epsilon_n$ be i.i.d.\ Rademacher random variables taking values $\pm 1$ with probability $1/2$ each. Given an integer vector $\boldsymbol{a} = (a_1, \dotsc, a_n)$, its concentration probability is the quantity $\rho(\boldsymbol{a}):=\sup_{x\in \Z}\Pr(\epsilon_1 a_1+\dotsb+\epsilon_n a_n = x)$. The Littlewood--Offord problem asks for bounds on $\rho(\boldsymbol{a})$ under various hypotheses on $\boldsymbol{a}$, whereas the \emph{inverse} Littlewood--Offord problem, posed by Tao and Vu, asks for a characterization of all vectors $\boldsymbol{a}$ for which $\rho(\boldsymbol{a})$ is large. In this paper, we study the associated counting problem: \emph{How many} integer vectors $\boldsymbol{a}$ belonging to a specified set have large $\rho(\boldsymbol{a})$? The motivation for our study is that in typical applications, the inverse Littlewood--Offord theorems are only used to obtain such counting estimates. Using a more direct approach, we obtain significantly better bounds for this problem than those obtained using the inverse Littlewood--Offord theorems of Tao and Vu and of Nguyen and Vu. Moreover, we develop a framework for deriving upper bounds on the probability of singularity of random discrete matrices that utilizes our counting result. To illustrate the methods, we present the first `exponential-type' (i.e., $\exp(-n^c)$ for some positive constant $c$) upper bounds on the singularity probability for the following two models: (i) adjacency matrices of dense signed random regular digraphs, for which the previous best known bound is $O(n^{-1/4})$ due to Cook; and (ii) dense row-regular $\{0,1\}$-matrices, for which the previous best known bound is $O_{C}(n^{-C})$ for any constant $C>0$ due to Nguyen.
\end{abstract}

\section{Introduction}
\subsection{Littlewood--Offord theory}
In connection with their study of random polynomials, Littlewood and Offord~\cite{littlewood1943number} introduced the following problem. Let $\boldsymbol{a}:= (a_1,\dotsc, a_n) \in (\Z \setminus \{0\})^{n}$ and let $\epsilon_1,\dotsc, \epsilon_n$ be independent and identically distributed (i.i.d.) Rademacher random variables, i.e., each $\epsilon_i$ independently takes values $\pm 1$ with probability $1/2$ each. Estimate the largest atom probability $\rho(\boldsymbol{a})$, which is defined by
\[
  \rho(\boldsymbol{a}) := {\textstyle \sup_{x\in \Z}}\Pr\left(\epsilon_1 a_1 + \dotsb + \epsilon_n a_n = x\right).
\]
They showed that $\rho(\boldsymbol{a}) = O(n^{-1/2}\log{n})$ for any such $\boldsymbol{a}$. Soon after, Erd\H{o}s~\cite{erdos1945lemma} used Sperner's theorem to give a simple combinatorial proof of the refinement $\rho(\boldsymbol{a}) \leq \binom{n}{\lfloor n/2 \rfloor} / 2^{n} = O(n^{-1/2})$, which is tight, as is readily seen by taking $\boldsymbol{a}$ to be the all ones vector. 

The results of Littlewood--Offord and Erd\H{o}s generated considerable interest and inspired further research on this problem. One such direction of research was concerned with improving the bound of Erd\H{o}s under additional assumptions on $\boldsymbol{a}$. The first such improvement was due to Erd\H{o}s and Moser~\cite{erdos1947e736}, who showed that if all coordinates of $\boldsymbol{a}$ are distinct, then $\rho(\boldsymbol{a}) = O(n^{-3/2}\log{n})$. Subsequently, S\'ark\H{o}zy and Szemer\'edi~\cite{sarkozy1965problem} improved this estimate to $O(n^{-3/2})$, which is asymptotically optimal. Soon afterwards, Hal\'asz~\cite{halasz1977estimates} proved the following very general theorem relating the `additive structure' of the coordinates of $\boldsymbol{a}$ to $\rho(\boldsymbol{a})$. 

\begin{theorem}[Hal\'asz~\cite{halasz1977estimates}] 
  \label{thm:halasz-orig}
  Let $\boldsymbol{a}:=(a_1,\dots,a_n) \in (\Z \setminus \{0\})^{n}$. For an integer $k \geq 1$, let $R_k(\boldsymbol{a})$ denote the number of solutions to $\pm a_{i_1} \pm a_{i_2} \dotsb \pm a_{i_{2k}} = 0$, where repetitions are allowed in the choice of $i_1,\dots,i_{2k} \in [n]$. There exists an absolute constant $C > 0$ such that
  \[
    \rho(\boldsymbol{a})\leq \frac{C\sqrt{k}R_{k}(\boldsymbol{a})}{2^{2k} n^{2k+1/2}} + e^{-n/\max\{k,C\}}.
  \]
\end{theorem}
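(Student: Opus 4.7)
The plan is to attack this via Fourier analysis. Since $S := \sum_j \epsilon_j a_j$ takes integer values, I would start by applying the Fourier inversion formula, which reduces the problem to an $L^1$ estimate on the characteristic function:
$$\rho(\boldsymbol{a}) \;\le\; \int_{-1/2}^{1/2} |\phi(\xi)|\,d\xi, \qquad \phi(\xi) := \mathbb{E}\bigl[e^{2\pi i\xi S}\bigr] = \prod_{j=1}^n \cos(2\pi\xi a_j).$$

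Next, for a pointwise bound on $|\phi|$, I would use $\cos^2\theta \le e^{-\sin^2\theta}$ (which follows from $1-x\le e^{-x}$) together with $\sin^2\theta = \tfrac{1}{2}(1 - \cos 2\theta)$ to obtain
$$|\phi(\xi)|^2 \;\le\; \exp\!\Bigl(-\sum_{j=1}^n \sin^2(2\pi\xi a_j)\Bigr) \;=\; \exp\!\Bigl(-\tfrac{1}{2}\bigl(n - C(\xi)\bigr)\Bigr),$$
where $C(\xi) := \sum_{j=1}^n \cos(4\pi\xi a_j)$. The quantity $R_k(\boldsymbol{a})$ will enter through the moment identity
$$\int_0^1 C(\xi)^{2k}\,d\xi \;=\; \frac{R_k(\boldsymbol{a})}{2^{2k}},$$
which I would verify by expanding each cosine as $\tfrac12(e^{2\pi i(\cdot)}+e^{-2\pi i(\cdot)})$, multiplying out, and using $\int_0^1 e^{2\pi i m\xi}\,d\xi = \mathbf{1}_{m=0}$ (after a substitution $\xi\mapsto\xi/2$ to absorb the factor $4\pi$).

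To combine these ingredients, I would decompose $[-1/2,1/2]$ according to the size of $n - C(\xi)$. On the region where $n - C(\xi) \ge M$, for a threshold $M \asymp n/k$, the pointwise bound yields $|\phi(\xi)| \le e^{-M/4}$, contributing the exponential error $e^{-n/\Theta(k)}$. On its complement $\{C(\xi) > n - M\}$ (where $C > n-M > 0$), Markov's inequality combined with the moment identity gives
$$\mathrm{meas}\bigl\{\xi : C(\xi) > n - M\bigr\} \;\le\; \frac{\int_0^1 C(\xi)^{2k}\,d\xi}{(n-M)^{2k}} \;=\; O\!\left(\frac{R_k(\boldsymbol{a})}{2^{2k} n^{2k}}\right),$$
which, multiplied by the maximum of $|\phi|$ on this set, supplies the main term.

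The main obstacle is that the crude combination above loses a factor of $\sqrt{k/n}$ relative to the bound in the theorem. The extra gain ought to come from integrating $|\phi(\xi)|$ against its actual decay on the small set, rather than simply bounding the integrand by $1$. Concretely, I would dyadically decompose the nested level sets $\{C(\xi) \ge n - 2^{-j}M\}$ and, on the $j$-th annular slice, combine the tighter pointwise bound $|\phi(\xi)| \le \exp(-2^{-j}M/4)$ with Markov's inequality applied on that slice. Summing the resulting geometric series (effectively a Laplace-type estimate across the level sets) should extract the missing $\sqrt{k/n}$. Balancing the dyadic decay against the moment bound and tracking the sharp constants will be the delicate step.
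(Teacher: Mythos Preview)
Your Fourier setup, the pointwise bound on $|\phi|$, and the moment identity for $R_k$ are correct and match Hal\'asz's argument (the paper does not reprove this theorem over $\Z$ but gives the $\F_p$ analogue in its appendix along the same lines). You also correctly diagnose that the naive combination loses a factor $\sqrt{k/n}$. The gap is in your proposed remedy. Dyadically slicing $\{C(\xi) \ge n - 2^{-j}M\}$ and reapplying Markov on each slice cannot recover that factor: for every $t \in [0,M]$ with $M \asymp n/k$, Markov gives
\[
  \bigl|\{\xi : C(\xi) \ge n - t\}\bigr| \;\le\; \frac{R_k(\boldsymbol{a})}{2^{2k}(n-t)^{2k}} \;=\; \Theta\!\left(\frac{R_k(\boldsymbol{a})}{2^{2k}\, n^{2k}}\right),
\]
a bound \emph{uniform in $t$} on this range, since $(n-t)^{2k}$ varies by at most a constant when $t\le n/(2k)$. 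Hence the layer-cake integral $\int_0^M |\{C \ge n-t\}|\, e^{-t/4}\,dt$ is still $O\bigl(R_k/(2^{2k}n^{2k})\bigr)$; the level-set factor in the integrand does not decay as $t\to 0$, so no geometric or Laplace-type saving can appear.

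What is missing is an \emph{additive} input on the level sets. Hal\'asz works with $T_t := \{\xi : \sum_j \|\xi a_j\|^2 \le t\}$ (equivalent to yours via $|\cos(\pi y)|\le e^{-\|y\|^2/2}$) and observes, by the triangle inequality for $\|\cdot\|$ and Cauchy--Schwarz, that the $m$-fold sumset satisfies $mT_t \subseteq T_{m^2 t}$. Combined with the sumset lower bound $|mT_t| \ge \min(1, m|T_t|)$ on the torus (Cauchy--Davenport in the paper's $\F_p$ proof), this yields $|T_t| \lesssim \sqrt{t/M}\, |T_M| + 1$ for $0<t\le M$, which is exactly the $\sqrt{t}$ behaviour needed so that $\int_0^M |T_t|\, e^{-t}\,dt \lesssim |T_M|/\sqrt{M}$. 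A single Markov estimate then bounds $|T_M|$, and the factor $1/\sqrt{M}\asymp\sqrt{k/n}$ drops out. Your dyadic scheme provides no substitute for this sumset step, so as written the argument stalls at the crude bound.
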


It is easy to see that Hal\'asz's inequality, applied with $k=1$, yields the estimate $\rho(\boldsymbol{a}) = O(n^{-1/2})$ for every $\boldsymbol{a} \in (\Z \setminus \{0\})^n$; if one further assumes that the coordinates of $\boldsymbol{a}$ are distinct, then $R_1(\boldsymbol{a}) \le 2n$ and one obtains the stronger bound $\rho(\boldsymbol{a}) = O(n^{-3/2})$, recovering the result of S\'ark\H{o}zy and Szemer\'edi. We emphasize that \cref{thm:halasz-orig} is valid even when $k$ grows with $n$ (the constant $C$ does not depend on either $k$, $n$, or $\boldsymbol{a}$). This fact will prove to be crucial for our work.  

\subsection{Inverse Littlewood--Offord theory}

Guided by inverse theorems from additive combinatorics, Tao and Vu \cite{tao2009inverse} brought a new perspective to the Littlewood--Offord problem. Instead of imposing further assumptions on $\boldsymbol{a}$ in order to obtain better bounds on $\rho(\boldsymbol{a})$, they tried to find the underlying reason why $\rho(\boldsymbol{a})$ could be large. In this subsection, we provide only a very brief overview of their findings and of subsequent work that followed. We refer the interested reader to the survey \cite{nguyen2013small} and the textbook \cite{tao2006additive} for further information on both forward and inverse Littlewood--Offord theory. We begin by recalling a central notion in additive combinatorics. 

\begin{definition}
  \label{defn:GAP}
  For an integer $r \geq 0$, we say that a set $Q \subseteq \Z$ is a \emph{generalized arithmetic progression (GAP)} of \emph{rank} $r$ if
  \[
    Q:= \{q_0 + x_1 q_1 + \dotsb + x_r q_r : x_i \in \Z, M_i \leq x_i \leq M_i'\text{ for all }i \in [r]\},
  \]
for some $q_0,\dotsc, q_r, M_1,\dotsc, M_r, M_1',\dotsc, M_r' \in \Z$. The numbers $q_i$ are called the \emph{generators} of $Q$. If $M_i = -M_i'$ for all $i \in [r]$ and $q_0 = 0$, then $Q = -Q$ and thus $Q$ is said to be \emph{symmetric}.  
\end{definition}

It is often useful to think of $Q$ as the image of the integer box $B:= \{(x_1,\dotsc,x_r) \in \Z^{r} : M_i \leq x_i \leq M_i'\}$ under the affine map 
\[
  \Phi \colon (x_1,\dots,x_r) \mapsto q_0 + x_1 q_1 + \dotsb + x_r q_r.
\]
If $\Phi$ is an injective map, we say that $Q$ is \emph{proper}. In this case, we also define the \emph{volume} of $Q$ to be the cardinality of $B$ (which is equal to the cardinality of $Q$). \\

Returning to the Littlewood--Offord problem, it is easy to see that if the coordinates  of $\boldsymbol{a}$ belong to a proper symmetric GAP of `small' rank and `small' volume, then $\rho(\boldsymbol{a})$ is necessarily `large'. More precisely, fix an $r$ and suppose that there are integers $q_1, \dotsc, q_r$ and $M_1, \dotsc, M_r$ such that $a_i = x_{i,1} q_1 + \dotsb + x_{i,r} q_r$, where $|x_{i,j}| \le M_j$, for all $i \in [n]$ and $j \in [r]$. In this case, the random sum $S:= \epsilon_1 a_1 + \dotsb + \epsilon_n a_n$ may be written as
\[
  S = q_1 \cdot \left\{\epsilon_1 x_{1,1} +\dotsb + \epsilon_n x_{n,1}\right\} + \dotsb + q_r \cdot \left\{\epsilon_1 x_{1,r} + \dotsb + \epsilon_n x_{n,r}\right\}.
\]
It follows from Chebyshev's inequality that with probability at least $1/2$, each of the $r$ sums $\epsilon_1 x_{1,j} + \dotsb + \epsilon_n x_{n,j}$ falls into an interval of length $O_r(\sqrt{n} M_j)$. Letting $B = \{-M_1, \dotsc, M_1\} \times \dotsb \times \{-M_r, \dotsc, M_r\}$, we may conclude that with probability at least $1/2$, the variable $S$ takes values in a fixed subset of size at most $O_r( n^{r/2}|B|)$. By the pigeonhole principle, there is some value which $S$ assumes with probability at least $\Omega_r(n^{-r/2}|B|^{-1})$. In other words, we see that
\[
  \rho(\boldsymbol{a}) = \Omega_{r}\left(\frac{1}{n^{r/2}|B|}\right).
\]

In particular, if the coordinates of an $n$-dimensional vector $\boldsymbol{a}$ are contained in a  GAP of rank $r$ and volume at most $n^{C-r/2}$, for some constant $C$, then $\rho(\boldsymbol{a}) = \Omega_{r}(n^{-C})$. The inverse Littlewood--Offord theorems of Tao and Vu~\cite{tao2009inverse,tao2010sharp} use deep Freiman-type results from additive combinatorics to show that a weak converse of this statement holds. Roughly speaking, the only reason for a vector $\boldsymbol{a}$ to have $\rho(\boldsymbol{a})$ only polynomially small is that most coordinates of $\boldsymbol{a}$ belong to a GAP of small rank and small volume. These results were subsequently sharpened by Nguyen and Vu~\cite{nguyen2011optimal}, who proved the following optimal inverse Littlewood--Offord theorem. 

\begin{theorem}[Nguyen--Vu~\cite{nguyen2011optimal}]
\label{thm:optimal-LO-nv}
Let $C$ and $\varepsilon < 1$ be positive constants. If $\boldsymbol{a} \in (\Z \setminus \{0\})^n$ satisfies
\[
  \rho(\boldsymbol{a}) \geq n^{-C},
\]
then there exists a proper symmetric GAP $Q$ of rank $r=O_{C,\varepsilon}(1)$ and volume 
\[
  |Q| = O_{C,\varepsilon}\left(\frac{1}{\rho(\boldsymbol{a})n^{r/2}}\right)
\]
that contains all but at most $\varepsilon n$ coordinates of $\boldsymbol{a}$ (counting multiplicities). 
\end{theorem}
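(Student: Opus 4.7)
The plan is to convert the concentration hypothesis $\rho(\boldsymbol{a}) \ge n^{-C}$ into additive-combinatorial information about $\boldsymbol{a}$ via Hal\'asz's inequality (\cref{thm:halasz-orig}), then extract a proper symmetric GAP of bounded rank from this information via a Freiman-type inverse theorem, and finally sharpen the volume of the GAP to the claimed optimal bound. The three stages are a Hal\'asz step, a Freiman step, and a volume-sharpening step; the third is the delicate part.

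For the first stage, I would apply \cref{thm:halasz-orig} with $k$ a large constant chosen in terms of $C$ so that the error term $e^{-n/\max\{k,C\}}$ is dominated by $n^{-C}/2$ for all sufficiently large $n$. The hypothesis $\rho(\boldsymbol{a}) \ge n^{-C}$ then rearranges to
\[
  R_k(\boldsymbol{a}) \;\geq\; \Omega_{C,k}\bigl(\rho(\boldsymbol{a})\cdot n^{2k+1/2}\bigr),
\]
which is a lower bound on the $2k$-fold additive energy of the symmetric multiset $\{\pm a_1,\dotsc,\pm a_n\}$. For the second stage, I would apply the higher-energy Balog--Szemer\'edi--Gowers theorem followed by iterated Plünnecke--Ruzsa estimates to extract a subset $A' \subseteq \{a_1,\dotsc,a_n\}$ of size $|A'| \ge (1-\varepsilon)n$ with bounded doubling $|A'+A'| \le K|A'|$ for some $K=O_{C,\varepsilon}(1)$. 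Freiman's theorem over $\Z$ then produces a proper GAP $Q_0$ of rank $r=O_{C,\varepsilon}(1)$ containing $A'$, which after symmetrisation (replacing $Q_0$ by $Q_0 - Q_0$ and absorbing a bounded factor into the volume) is a symmetric proper GAP of the same rank.

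The main obstacle is that Freiman only delivers the crude volume bound $|Q_0| = O_{C,\varepsilon}(n)$, whereas the theorem demands $|Q| = O_{C,\varepsilon}\bigl(\rho(\boldsymbol{a})^{-1} n^{-r/2}\bigr)$, which is substantially smaller whenever $\rho(\boldsymbol{a}) \gg n^{-1-r/2}$. To bridge this gap I would use the Fourier-analytic bound
\[
  \rho(\boldsymbol{a})^{2} \;\le\; \int_{0}^{1} \prod_{i=1}^{n}\cos^{2}(\pi \xi a_i)\, d\xi,
\]
which forces the large-level set $T := \{\xi \in [0,1] : \prod_i \cos^{2}(\pi\xi a_i) \ge \rho(\boldsymbol{a})^{2}/2\}$ to have Lebesgue measure $|T| \gtrsim \rho(\boldsymbol{a})^{2}$. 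This set $T$ is essentially a Bohr-type neighbourhood determined by $(a_1,\dotsc,a_n)$, and its size is in duality with the volume of any GAP containing the coordinates. Combining this lower bound on $|T|$ with a Chang-type spectral/Bohr-set argument inside $Q_0$ -- passing iteratively to sub-GAPs while tracking both the shrinking fraction of captured coordinates and the measure of $T$ -- should yield a sub-GAP $Q \subseteq Q_0$ of the same bounded rank, still covering all but $\varepsilon n$ of the coordinates of $\boldsymbol{a}$, whose volume matches the forward-direction prediction $|Q| = \Theta_{r}\bigl(\rho(\boldsymbol{a})^{-1} n^{-r/2}\bigr)$ up to a constant. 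Matching this upper bound to the forward lower bound (which, as noted in the introduction, gives $\rho(\boldsymbol{a}) \ge \Omega_{r}(n^{-r/2}|Q|^{-1})$ whenever the $a_i$'s lie in a rank-$r$ GAP $Q$) requires careful bookkeeping of all the constants depending on $C$, $\varepsilon$, $k$, and $r$, and is the quantitatively most demanding part of the argument.
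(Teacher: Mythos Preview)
This theorem is not proved in the paper; it is quoted as a result of Nguyen and Vu \cite{nguyen2011optimal} and used only as background and motivation. There is therefore no proof in the paper to compare your proposal against.

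That said, your sketch follows the broad architecture of the Nguyen--Vu argument (Hal\'asz to get high additive energy, then Freiman-type structure, then a refinement to the optimal volume), but the third stage as you describe it is not a proof. The ``Chang-type spectral/Bohr-set argument inside $Q_0$'' that you invoke to pass from the crude Freiman bound $|Q_0| = O_{C,\varepsilon}(n)$ to the optimal $|Q| = O_{C,\varepsilon}(\rho(\boldsymbol{a})^{-1} n^{-r/2})$ is precisely the content of the Nguyen--Vu theorem, and your description does not supply a mechanism for it. In particular, the Fourier lower bound $|T| \gtrsim \rho(\boldsymbol{a})^2$ you derive is off by a full power of $\rho(\boldsymbol{a})$ from what the duality argument would need, and there is no indication of how iterating sub-GAP refinements simultaneously preserves the $(1-\varepsilon)n$ coverage while shrinking the volume by the required factor of roughly $n^{1+r/2}\rho(\boldsymbol{a})$. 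The actual argument in \cite{nguyen2011optimal} uses a more delicate ``long-range'' inverse theorem and a covering/rank-reduction step that are not captured by your outline.
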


We remark that Nguyen and Vu also proved a version of the above theorem (this is \cite[Theorem~2.5]{nguyen2011optimal}) whose statement allows for a trade-off between the size of the `exceptional set' of coordinates of $\boldsymbol{a}$ which are not in the GAP $Q$, and the bound on the size of $Q$.

\subsection{The counting problem in inverse Littlewood--Offord theory}

For typical applications, especially those in random matrix theory, one needs to resolve only the following \emph{counting variant} of the inverse Littlewood--Offord problem: for \emph{how many} vectors $\boldsymbol{a}$ in a given collection $\mathcal{A}\subseteq \Z^{n}$ is their largest atom probability $\rho(\boldsymbol{a})$ greater than some prescribed value? The utility of such results is that they enable various union bound arguments, as one can control the number of terms in the relevant union/sum. Such counting results may be easily deduced from the inverse Littlewood--Offord theorems, as we shall now show.

As a motivating example (see \cite{nguyen2011optimal}), suppose that we would like to count the number of integer vectors $\boldsymbol{a} \in \Z^{n}$ such that $\|\boldsymbol{a}\|_{\infty} \leq N = n^{O(1)}$ and $\rho(\boldsymbol{a}) \geq \rho:=n^{-C}$. \cref{thm:optimal-LO-nv} states that for any $\varepsilon \in (0,1)$, all but $\varepsilon n$ of the coordinates (counting multiplicities) of any such vector $\boldsymbol{a}$ are contained in a proper symmetric GAP $Q$ of rank $r = O_{C,\varepsilon}(1) \geq 1$ and volume $|Q|= O_{C,\varepsilon}(n^{C - \frac{r}{2}})$. Fix any such $Q$. The number of $n$-dimensional vectors all of whose coordinates belong to $Q$ is at most
\[
  |Q|^{n} \leq (O_{C,\varepsilon}(1))^{n} n^{Cn}n^{-\frac{n}{2}}.
\]
Moreover, there are at most $\binom{n}{\varepsilon n} \cdot N^{\varepsilon n} = n^{O(\varepsilon)n}$ ways to introduce the `exceptional' $\varepsilon n$ coordinates from outside of $Q$. Finally, a more detailed version of \cref{thm:optimal-LO-nv} states that the number of ways in which we can choose the proper symmetric GAP $Q$ is negligible compared to our bound on $|Q|^{n}$. To summarize, we see that the number of vectors $\boldsymbol{a}$ satisfying the properties at the start of this paragraph is at most
\[
  n^{n\left(C-\frac{1}{2}+O(\varepsilon) + o_{C,\varepsilon}(1)\right)}.
\]
It is not difficult to see that this is tight up to the $O(\varepsilon)+ o_{C,\varepsilon}(1)$ term in the exponent. 

The primary drawback of the structural approach to the counting problem, which we described above, is that it is only effective for counting vectors $\boldsymbol{a}$ with $\rho(\boldsymbol{a}) \geq n^{-C}$, where $C > 0$ is allowed to grow only very mildly (in particular, much slower than logarithmically) with $n$. This is due to the dependencies between $C$ and $\varepsilon$ and the constants implicit in the $O$-notation. To make matters worse, improving these dependencies would most likely require (among other things) improving the bounds in Freiman's theorem, which is one of the central unsolved problems in additive combinatorics. In contrast, for many applications, one would ideally like to count vectors $\boldsymbol{a}$ with even exponentially small values of $\rho(\boldsymbol{a})$. Our first main theorem is a counting result for the inverse Littlewood--Offord problem, which is effective for values of $\rho(\boldsymbol{a})$ as small as $\exp\left(-c\sqrt{n\log{n}}\right)$, where $c>0$ is some sufficiently small constant. In order to motivate and state it, we need some preparation.

The starting point for our approach is the anti-concentration inequality of Hal\'asz mentioned earlier (\cref{thm:halasz-orig}). For reasons which will become clear later, we shall work with a variant of this inequality for finite fields of prime order. For a vector $\boldsymbol{a} \in \F_p^n$, we define $\rho_{\F_p}(\boldsymbol{a})$ and $R_k(\boldsymbol{a})$ as in~\cref{thm:halasz-orig}, except that all arithmetic is done over the $p$-element field $\F_p$, and we let $\supp(\boldsymbol{a}) = \{i \in [n] : a_i \neq 0 \mod p\}$.

\begin{theorem}[Hal\'asz's inequality over $\F_p$] 
  \label{thm:halasz-fp}
  There exists an absolute constant $C$ such that the following holds for every odd prime $p$, integer $n$, and vector $\boldsymbol{a}:=(a_1,\dotsc, a_n) \in \F_p^{n}\setminus \{\boldsymbol{0}\}$. Suppose that an integer $k \ge 0$ and positive real $M$ satisfy $30M \leq |\supp(\boldsymbol{a})|$ and $80kM \leq n$. Then,
  \[
    \rho_{\F_p}(\boldsymbol{a})\leq \frac{1}{p}+\frac{CR_k(\boldsymbol{a})}{2^{2k} n^{2k} \cdot M^{1/2}} + e^{-M}.
  \]
\end{theorem}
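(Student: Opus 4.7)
The plan is to adapt the Fourier-analytic proof of Hal\'asz's inequality (\cref{thm:halasz-orig}) to the finite-field setting. Starting from Fourier inversion over $\F_p$, let $S := \epsilon_1 a_1 + \dotsb + \epsilon_n a_n$ and
$\hat\mu(t) := \E\bigl[e^{2\pi i t S/p}\bigr] = \prod_{j=1}^n \cos(2\pi t a_j/p)$. Then
\[
  \rho_{\F_p}(\boldsymbol a) \;\le\; \frac{1}{p} + \frac{1}{p}\sum_{t \in \F_p\setminus\{0\}}|\hat\mu(t)|.
\]
Using the inequality $|\cos x|\le\exp(-\sin^2(x)/2)$ gives the pointwise bound $|\hat\mu(t)|\le e^{-f(t)}$, where $f(t):=\frac{1}{2}\sum_{j}\sin^2(2\pi ta_j/p)$. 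Partition the sum at the sublevel set $T_M:=\{t\in\F_p:f(t)\le M\}$: on the complement, each term contributes at most $e^{-M}$, yielding the $e^{-M}$ summand of the theorem.

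The main task is to bound $\frac{1}{p}\sum_{t \in T_M\setminus\{0\}}|\hat\mu(t)|$ by $CR_k(\boldsymbol a)/(2^{2k}n^{2k}\sqrt M)$. The key ingredient is the moment identity
\[
  \sum_{t \in \F_p}\sigma(t)^{2k} \;=\; \frac{p\,R_k(\boldsymbol a)}{2^{2k}}, \qquad \sigma(t):=\sum_{j=1}^n\cos(4\pi ta_j/p) \;=\; n-4f(t),
\]
obtained by expanding $\sigma(t)^{2k}$ into a sum of $2^{2k}n^{2k}$ complex exponentials, summing the resulting character sums $\sum_t e^{4\pi itm/p}$ (which vanish unless $m\equiv 0\pmod p$, using that $p$ is odd so $2$ is invertible in $\F_p$), and recognizing $R_k(\boldsymbol a)$. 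For $t\in T_M$ we have $\sigma(t)\ge n-4M$, which under the assumption $80kM\le n$ exceeds $n/2$; Markov's inequality then yields the first-pass bound $|T_M|\le C\,p\,R_k(\boldsymbol a)/n^{2k}$.

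Extracting the additional factor of $2^{2k}\sqrt M$ in the denominator is the delicate step and, I expect, the main obstacle of the proof. A naive geometric decomposition over the level sets $T_m$ for $m\le M$ would lose only a universal constant, so one must exploit the exponential decay $|\hat\mu(t)|^2\le e^{-2f(t)}$ more carefully. The planned approach is a weighted Cauchy--Schwarz of the form
\[
  \sum_{t\in T_M}|\hat\mu(t)| \;\le\; \Bigl(\sum_{t\in\F_p}|\hat\mu(t)|^2\sigma(t)^{2k}\Bigr)^{1/2}\Bigl(\sum_{t\in T_M}\sigma(t)^{-2k}\Bigr)^{1/2},
\]
coupled with an Esseen-type smoothing that replaces the indicator of $T_M$ by a kernel of effective width $\sim\sqrt M$. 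The hypothesis $30M\le|\supp(\boldsymbol a)|$ enters precisely at this stage: it guarantees that $f$ has enough `variance' on the nonzero coordinates of $\boldsymbol a$ for a Gaussian-type tail to emerge and provide the $1/\sqrt M$ saving. Combining the $1/p$ term, the refined bulk estimate, and the $e^{-M}$ tail yields the stated bound.
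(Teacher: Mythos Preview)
Your setup is correct and matches the paper: Fourier inversion, the pointwise bound $|\hat\mu(t)|\le e^{-f(t)}$, the level sets $T_M$, and the moment identity $\sum_{t\in\F_p}\sigma(t)^{2k}=pR_k(\boldsymbol a)/2^{2k}$ are exactly the ingredients used. Two points, however.

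First, the factor $2^{2k}$ is not part of the ``delicate step''; you are simply throwing it away in your Markov bound. On $T_M$ you have $\sigma(t)\ge n-4M$, and the hypothesis $80kM\le n$ gives $(n-4M)^{2k}\ge\bigl(1-\tfrac{1}{2k}\bigr)^{2k}n^{2k}\ge n^{2k}/\sqrt2$. Hence Markov already yields $|T_M|\le\sqrt2\,pR_k(\boldsymbol a)/(2^{2k}n^{2k})$, with the full $2^{2k}$ in the denominator. The only missing factor is $\sqrt M$.

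Second, and this is the genuine gap: your plan for extracting the $\sqrt M$ is not how Hal\'asz's argument works, and the weighted Cauchy--Schwarz you wrote down does not obviously lead anywhere (the first factor $\sum_t|\hat\mu(t)|^2\sigma(t)^{2k}$ is not the moment sum you computed, and no ``Esseen smoothing'' produces a $1/\sqrt M$ out of this). The actual mechanism is additive-combinatorial. Writing $T_t=\{r:\sum_j\|ra_j/p\|^2\le t\}$, the triangle inequality for $\|\cdot\|$ together with Cauchy--Schwarz gives the sumset inclusion $mT_t\subseteq T_{m^2t}$ for every integer $m\ge1$. On the other hand, the Cauchy--Davenport theorem yields $|mT_t|\ge\min\{p,\,m|T_t|-m\}$. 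Combining the two with $m=\lfloor\sqrt{2M/t}\rfloor$ gives
\[
  |T_t|\;\le\;\frac{\sqrt{2t}}{\sqrt M}\,|T_{2M}|+1
  \qquad\text{for }0<t\le 2M,
\]
provided $|T_{2M}|<p$. This last proviso is precisely where $30M\le|\supp(\boldsymbol a)|$ enters: an averaging computation shows $\sum_{r\in\F_p}\sum_j\|ra_j/p\|^2>|\supp(\boldsymbol a)|\,p/15$, which forces $|T_t|<p$ whenever $t\le|\supp(\boldsymbol a)|/15$; your ``variance of $f$'' interpretation is not the right one. With the square-root growth of $|T_t|$ in hand, one writes $\sum_r e^{-f(r)}=\int_0^\infty|T_{2t}|e^{-t}\,dt$, splits at $t=M$, and integrates $\sqrt t\,e^{-t}$ to pick up the constant. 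The $\sqrt M$ saving is thus a consequence of the sumset/Cauchy--Davenport step, not of any smoothing or second-moment argument.
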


The proof of this theorem is a straightforward adaptation of Hal\'asz's original argument from~\cite{halasz1977estimates}. For the reader's convenience, we provide complete details in \cref{app:halasz}.

Note that Hal\'asz's inequality may be viewed as a \emph{partial inverse Littlewood--Offord theorem}. Indeed, if $\rho_{\F_p}(\boldsymbol{a})$ is `large', then it must be the case that $R_k(\boldsymbol{a})$ is also `large'. Hence, an upper bound on the number of vectors $\boldsymbol{a}$ for which $R_k(\boldsymbol{a})$ is `large' is also an upper bound on the number of vectors with `large' $\rho_{\F_p}(\boldsymbol{a})$. Moreover, since $\rho_{\F_p}(\boldsymbol{a}) \le \rho_{\F_p}(\boldsymbol{b})$ for every subvector $\boldsymbol{b} \subseteq \boldsymbol{a}$, when $\rho_{\F_p}(\boldsymbol{a})$ is `large', so is $R_k(\boldsymbol{b})$ for \emph{every} $\boldsymbol{b} \subseteq \boldsymbol{a}$. As we shall show, the number of vectors $\boldsymbol{a}$ with such `hereditary' property can be bounded from above quite efficiently using direct combinatorial arguments. Consequently, our approach yields strong bounds on the number of vectors $\boldsymbol{a}$ with $\rho_{\F_p}(\boldsymbol{a}) \ge \rho$ for a significantly wider range of $\rho$ than the range amenable to the `structural' approach described above.

Instead of working directly with $R_k(\boldsymbol{a})$, however, we will find it more convenient to work with the following closely related quantity.

\begin{definition}
  Suppose that $\boldsymbol{a}\in \F_{p}^{n}$ for an integer $n$ and a prime $p$ and let $k \in \N$. For every $\alpha \in [0,1]$, we define $\Rka(\boldsymbol{a})$ to be the number of solutions to
  \[
    \pm a_{i_1}\pm a_{i_2}\dotsb \pm a_{i_{2k}}= 0 \mod p
  \]
  that satisfy $|\{i_1, \dotsc, i_{2k}\}| \ge (1+\alpha)k$.
\end{definition}

It is easily seen that $R_k(\boldsymbol{a})$ cannot be much larger than $\Rka(\boldsymbol{a})$. This is formalized in the following simple lemma.

\begin{lemma}
\label{lemma:R_k vs Rka}
For all integers $k$, $n$ with $k \le n/2$, any prime $p$, vector $\boldsymbol{a} \in \F^n_p$, and $\alpha \in [0,1]$,
\[
  R_k(\boldsymbol{a})\leq  \Rka(\boldsymbol{a}) + \left(40 k^{1-\alpha}n^{1+\alpha}\right)^k.
\]
\end{lemma}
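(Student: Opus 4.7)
The plan is to identify $R_k(\boldsymbol{a}) - \Rka(\boldsymbol{a})$ as a count of signed tuples with few distinct indices and to bound this count by a direct combinatorial argument, discarding the sum-zero constraint entirely. By definition, the difference equals the number of choices of signs $(\epsilon_1, \dotsc, \epsilon_{2k}) \in \{\pm 1\}^{2k}$ and indices $(i_1, \dotsc, i_{2k}) \in [n]^{2k}$ with $\epsilon_1 a_{i_1} + \dotsb + \epsilon_{2k} a_{i_{2k}} \equiv 0 \pmod{p}$ and $|\{i_1, \dotsc, i_{2k}\}| < (1+\alpha)k$. Dropping the linear equation yields
\[
R_k(\boldsymbol{a}) - \Rka(\boldsymbol{a}) \leq 2^{2k} \cdot \bigl| \{ (i_1, \dotsc, i_{2k}) \in [n]^{2k} : |\{i_1, \dotsc, i_{2k}\}| \leq M \} \bigr|,
\]
where $M := \lceil (1+\alpha)k \rceil - 1 \leq (1+\alpha)k$.

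Next, I would bound the number of such index tuples by noting that $\{i_1, \dotsc, i_{2k}\}$ is contained in some $M$-subset of $[n]$, yielding the crude estimate $\binom{n}{M} M^{2k}$. The key point is to use this single-scale bound rather than summing $\sum_{j \leq M} \binom{n}{j} j^{2k}$, which would cost an extra factor of roughly $M$ and push the final constant past the target of $40$. Using $\binom{n}{M} \leq (en/M)^M$ and noting that $m \mapsto (en)^m m^{2k-m}$ is increasing on $[1, 2k]$ whenever $k \leq n/2$ (its logarithmic derivative is $\log(en/m) + 2k/m - 1$, which is nonnegative throughout this range), I would upper-bound by the value at $m = (1+\alpha)k$, obtaining
\[
R_k(\boldsymbol{a}) - \Rka(\boldsymbol{a}) \leq \bigl[4 e^{1+\alpha} (1+\alpha)^{1-\alpha}\bigr]^k \cdot \bigl(n^{1+\alpha} k^{1-\alpha}\bigr)^k.
\]

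The only remaining step, and the place where the stated constant $40$ is pinned down, is to verify that $4 e^{1+\alpha}(1+\alpha)^{1-\alpha} \leq 40$ for all $\alpha \in [0,1]$. A short calculus check (the logarithm of this quantity has derivative $1 - \log(1+\alpha) + (1-\alpha)/(1+\alpha)$, which one shows remains positive on $[0,1]$ by noting that the second derivative is negative and the value at $\alpha=1$ is $1 - \log 2 > 0$) shows that the maximum occurs at $\alpha = 1$ with value $4e^2 \approx 29.56$, comfortably below $40$. The main obstacle is thus a minor arithmetic check; the substantive observation is that avoiding the naive summation over image sizes preserves the constant. Rearranging gives the lemma.
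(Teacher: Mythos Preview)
Your proof is correct and follows essentially the same approach as the paper's: both drop the linear constraint, bound the number of index tuples with fewer than $(1+\alpha)k$ distinct values by $\binom{n}{(1+\alpha)k}\bigl((1+\alpha)k\bigr)^{2k}\cdot 2^{2k}$, apply $\binom{a}{b}\le(ea/b)^b$, and then check that the resulting constant is at most $40$. You are somewhat more careful in two places---you justify the monotonicity step needed to pass from the integer $M$ to the real value $(1+\alpha)k$, and you retain the factor $(1+\alpha)^{1-\alpha}$ that the paper silently drops in its intermediate inequality---but the argument is the same in substance.
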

\begin{proof}
  By definition, $R_k(\boldsymbol{a})$ is equal to $\Rka(\boldsymbol{a})$ plus the number of solutions to $\pm a_{i_1}\pm a_{i_2}\dotsb\pm a_{i_{2k}} = 0$ that satisfy $|\{i_1, \dotsc, i_{2k}\}| < (1+\alpha)k$. The latter quantity is bounded from above by the number of sequences $(i_1, \dotsc, i_{2k}) \in [n]^{2k}$ with at most $(1+\alpha)k$ distinct entries times $2^{2k}$, the number of choices for the $\pm$ signs. Thus
  \[
    R_k(\boldsymbol{a}) \leq \Rka(\boldsymbol{a}) + \binom{n}{(1+\alpha)k} \big((1+\alpha)k\big)^{2k}2^{2k} \leq \Rka(\boldsymbol{a}) +  \left(4e^{1+\alpha}k^{1-\alpha}n^{1+\alpha}\right)^k,
  \]
  where the final inequality follows from the well-known bound $\binom{a}{b} \le (ea/b)^b$. Finally, noting that $4e^{1+\alpha} \leq 4e^{2} \leq 40$ completes the proof.
\end{proof}

Our counting theorem provides an upper bound on the number of sequences $\boldsymbol{a}$ for which every `relatively large' subsequence $\boldsymbol{b}$ has `large' $\Rka(\boldsymbol{b})$. In particular, the sequences $\boldsymbol{a}$ that are not counted have a `relatively large' subsequence $\boldsymbol{b}$ with `small' $\Rka(\boldsymbol{b})$ and thus also `small' $R_k(\boldsymbol{b})$ (by \cref{lemma:R_k vs Rka}), and hence small $\rho_{\F_p}(\boldsymbol{b})$ (by \cref{thm:halasz-fp}). Since $\rho_{\F_p}(\boldsymbol{a}) \le \rho_{\F_p}(\boldsymbol{b})$ whenever $\boldsymbol{b} \subseteq \boldsymbol{a}$, each sequence $\boldsymbol{a}$ that is not counted has `small' $\rho_{\F_p}(\boldsymbol{a})$.

\begin{theorem}
  \label{thm:counting-lemma}
  Let $p$ be a prime, let $k, n \in \N$, $s\in [n]$, $t\in [p]$, and let $\alpha \in (0,1)$. Denoting
  \[
    \Bad_{k,s,\geq t}^{\alpha}(n):= \left\{\boldsymbol{a} \in \F_{p}^{n} : R^{\alpha}_k(\boldsymbol{b})\geq t\cdot \frac{2^{2k} \cdot |\boldsymbol{b}|^{2k}}{p} \text{ for every } \boldsymbol{b}\subseteq \boldsymbol{a} \text{ with } |\boldsymbol{b}|\geq s\right\},
  \]
  we have
  \[
    |\Bad_{k,s,\geq t}^{\alpha}(n)| \leq \left(\frac{s}{n}\right)^{2k-1} (\alpha t)^{s-n} p^n.
  \]
\end{theorem}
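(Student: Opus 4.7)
The plan is to prove the bound by induction on $n - s$, establishing for each $n > s$ the one-step recursion
\[
  |\Bad_{k,s,\geq t}^{\alpha}(n)| \;\leq\; \frac{p}{\alpha t}\left(\frac{n-1}{n}\right)^{2k-1} |\Bad_{k,s,\geq t}^{\alpha}(n-1)|.
\]
The base case $n = s$ is the trivial bound $|\Bad_{k,s,\geq t}^{\alpha}(s)| \leq p^s$, and iterating the recursion from $n$ down to $s$ together with the telescoping identity $\prod_{j=s+1}^n (j-1)/j = s/n$ produces the claimed factor $(s/n)^{2k-1}(\alpha t)^{s-n}p^n$.

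To prove the recursion, I would double count triples $(\boldsymbol{a}, S, i^*)$ in which $\boldsymbol{a} \in \Bad_{k,s,\geq t}^{\alpha}(n)$, the configuration $S = \bigl((i_1, \dotsc, i_{2k}), (\epsilon_1, \dotsc, \epsilon_{2k})\bigr)$ is a solution contributing to $R_k^\alpha(\boldsymbol{a})$, and $i^* \in [n]$ is a \emph{single-occurrence index} of $S$, meaning one that appears exactly once among $i_1, \dotsc, i_{2k}$. The key preliminary observation is that every solution contributing to $R_k^\alpha$ has at least $2\alpha k$ single-occurrence indices: if $d$ is the number of distinct indices appearing in $S$ and $m$ the number of single-occurrence ones, then the $2k$ positions split as $2k \geq m + 2(d - m)$, so $m \geq 2d - 2k \geq 2\alpha k$, using $d \geq (1+\alpha)k$.

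Counting the triples from the $\boldsymbol{a}$-side, this observation combined with the definition of $\Bad$ yields the lower bound
\[
  \#\{\text{triples}\} \;\geq\; \sum_{\boldsymbol{a}\in\Bad_{k,s,\geq t}^{\alpha}(n)} 2\alpha k \cdot R_k^\alpha(\boldsymbol{a}) \;\geq\; |\Bad_{k,s,\geq t}^{\alpha}(n)| \cdot 2\alpha k \cdot \frac{t \cdot 2^{2k} n^{2k}}{p}.
\]
For the matching upper bound, I fix a pair $(i^*, S)$ with $i^*$ appearing exactly once in $S$ (with some sign $\epsilon^*$) and observe that the equation $\sum_j \epsilon_j a_{i_j} = 0 \pmod p$ then determines $a_{i^*}$ as an explicit linear function of $\boldsymbol{a}|_{[n]\setminus\{i^*\}}$. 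Since $\Bad_{k,s,\geq t}^{\alpha}$ is defined hereditarily and $n - 1 \geq s$, the restriction $\boldsymbol{a}|_{[n]\setminus\{i^*\}}$ must lie in $\Bad_{k,s,\geq t}^{\alpha}(n-1)$, so at most $|\Bad_{k,s,\geq t}^{\alpha}(n-1)|$ vectors $\boldsymbol{a}$ are compatible with a given $(i^*, S)$. The number of such pairs $(i^*, S)$ is crudely at most $n \cdot 2k \cdot 2^{2k} (n-1)^{2k-1}$ (choose $i^*$, its position among the $2k$ slots, its sign, and then fill the remaining $2k - 1$ positions with indices from $[n]\setminus\{i^*\}$ and signs in $\{\pm 1\}$). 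Comparing the two bounds and rearranging yields the recursion.

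I expect the main obstacle to be recognizing the correct double count; after that, everything reduces to elementary arithmetic. The crux of the argument is exploiting $R_k^\alpha$ (as opposed to $R_k$): the constraint $|\{i_j\}| \geq (1+\alpha)k$ is precisely what guarantees enough single-occurrence indices per solution, and these are exactly the indices that can be reconstructed from the remaining coordinates, driving the inductive reduction from $n$ to $n-1$.
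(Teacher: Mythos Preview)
Your proof is correct and is essentially the paper's argument repackaged as an induction on $n-s$: the paper performs a single global double count over ``traces'' $\big(I,(i_{s+1},\dots,i_n),(F_j,\boldsymbol{\epsilon}^j)_{j=s+1}^n\big)$ that record, in one object, the entire sequence of your one-step removals, using the same key observation (each solution counted by $R_k^\alpha$ has at least $2\alpha k$ non-repeated indices, so an $\alpha$-fraction of positions can serve as the removable coordinate) and the same telescoping $\prod_{j=s+1}^n\big((j-1)/j\big)^{2k-1}=(s/n)^{2k-1}$. Your inductive phrasing is a clean equivalent; the only cosmetic difference is that the paper expresses ``$\geq 2\alpha k$ single-occurrence indices'' as ``$\ell_{2k}$ is non-repeated in at least an $\alpha$-proportion of solutions, by symmetry under permuting the $2k$ slots.''
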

\begin{remark}
We emphasize that both the statement as well as the proof of our counting theorem are facilitated by working over the finite field $\F_p$. The counting corollaries of the inverse Littlewood--Offord theorems (over the integers) require additional hypotheses (as in the sample application mentioned above) in order to limit the number of GAPs that one needs to consider.  
\end{remark}

\begin{remark}
  It is well known (see, e.g., \cite{nguyen2011optimal}) that the inverse Littlewood--Offord theorems are powerful enough to recover Hal\'asz's inequality (\cref{thm:halasz-orig}) only for \emph{fixed} (or very mildly growing) values of $k$. In contrast, our approach utilizes Hal\'asz's inequality to provide non-trivial counting results even for $k$ growing as fast as $\sqrt{n\log{n}}$.   
\end{remark}

\subsection{Applications to random matrix theory}

The singularity problem for random Rademacher matrices asks the following deceptively simple question. %was one of the most outstanding problems in discrete random matrix theory and was only recently resolved \cite{tikhomirov2018}. 
Let $A_n$ denote a random $n\times n$ matrix whose entries are independent and identically distributed (i.i.d.) Rademacher random variables, which take values $\pm 1$ with probability $1/2$ each. What is the probability $c_n$ that $A_n$ is singular?\footnote{The singularity question for random Rademacher matrices is essentially equivalent to the singularity question for random Bernoulli (uniform on \{0,1\}) matrices. More precisely, let $M_n$ denote the $n\times n$ random Rademacher matrix and let $M_n'$ denote the $n \times n$ random Bernoulli matrix. The following coupling shows that $|\det(M_n)|$ has the same distribution as $2^{n-1} |\det(M_{n-1}')|$. Starting with $M_n$, we can multiply a subset of columns and a subset of rows by $-1$ so as to turn the first row and the first column of the matrix into the all ones vector; this does not affect the absolute value of the determinant. Next, by subtracting the first row from each of the other rows, we can further ensure that the first column equals $(1,0,\dots,0)^{T}$; this does not change the absolute value of the determinant either. The determinant of the resulting matrix is precisely equal to the determinant of the bottom-right $n-1 \times n-1$ submatrix. Since the choice of signs with which to multiply the rows and columns of $M_n$ depends only on the entries in the first row and the first column, it is readily checked that each entry of the bottom $n-1 \times n-1$ submatrix is $0$ or $-2$ with equal probability, independent of all other entries.} Considering the event that two rows or two columns of $A_n$ are equal (up to a sign) gives
\[
  c_n \geq (1+o(1))n^{2}2^{1-n}.
\]
 It is widely conjectured that this bound is tight.  On the other hand, perhaps surprisingly, it is non-trivial even to show that $c_n$ tends to $0$ as $n$ goes to infinity. This was accomplished in the classical work of Koml\'os~\cite{komlos1967determinant} in 1967; he showed that $c_n = O\big(n^{-1/2}\big)$ using the Erd\H{o}s--Littlewood--Offord anti-concentration inequality. Subsequently, a breakthrough result due to  Kahn, Koml\'os, and Szemer\'edi in 1995 \cite{kahn1995probability} showed that $$c_n = O(0.999^{n}).$$ 
In a very recent and impressive work, Tikhomirov \cite{tikhomirov2018}, improving on intermediate results by Tao and Vu \cite{tao2007singularity} and  Bourgain, Vu, and Wood
\cite{bourgain2010singularity}, showed that
$$c_n \leq (2+o(1))^{-n},$$
thereby settling the above conjecture up to lower order terms. 

The singularity problem becomes significantly more difficult when one considers models of random matrices with dependencies between entries. In this work, we develop a framework utilizing \cref{thm:counting-lemma} to study the singularity probability of two models of discrete random $n\times n$ matrices which come from the theory of random graphs: the adjacency matrix of a random regular digraph (r.r.d.) with independent $\pm$ signs and the adjacency matrix of random left-regular bipartite graph, that is, a uniformly random balanced bipartite graph whose all `left' vertices have the same degree. The best known upper bound on the singularity probability in the first model is not even $n^{-1}$; it is achieved by combining Koml\'os's argument with additional combinatorial ideas. The best known upper bound on the singularity probability in the second model is $n^{-C}$, for any constant $C>0$; it is obtained using a nonstandard application of the optimal inverse Littlewood--Offord theorem. In each of these two cases, it is conjectured (\cite{cook2017singularity,nguyen2013singularity}) that the singularity probability is, in fact, exponentially small. While not entirely settling these conjectures, we will provide the first `exponential-type' (i.e.\ $\exp(-cn^{c})$ for some positive constant $c$) upper bounds on the singularity probability for these models. Moreover, the arguments we use for studying both these models are very similar, whereas previously, they were handled using quite different techniques. We discuss this in more detail below.

\subsubsection{Singularity of signed r.r.d.\ matrices}

Let $\M^{\pm}_{n,d}$ denote the set of all $n\times n$ matrices $M_n^{\pm}$ with entries in $\{-1,0,1\}$ which satisfy the constraints
\[
  d = \sum_{i=1}^{n}|M^{\pm}_{n,d}(i,k)| = \sum_{j=1}^{n}|M^{\pm}_{n,d}(k,j)|
\]
for all $k\in [n]$. The probability of singularity of a uniformly random element of $\M^{\pm}_{n,d}$ was studied by Cook~\cite{cook2017singularity} as a first step towards the investigation of the singularity probability of the adjacency matrix of a random regular digraph. In particular, he showed the following.

\begin{theorem}[Cook~\cite{cook2017singularity}]
  Assume that $C\log^{2}n \leq d \leq n$ for a sufficiently large constant $C>0$ and let $M^{\pm}_{n,d}$ be a uniformly random element of $\M^{\pm}_{n,d}$. Then,
  \[
    \Pr\left(M^{\pm}_{n,d} \text{ is singular}\right) = O\left(d^{-1/4}\right).
  \]
\end{theorem}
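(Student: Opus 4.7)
The plan is to exploit the two-stage structure of the uniform measure on $\M^{\pm}_{n,d}$: a uniformly random element may be sampled by first choosing the underlying $d$-regular support $S$ (i.e., the adjacency matrix of a uniformly random $d$-regular digraph on $n$ vertices) and then assigning i.i.d.\ Rademacher signs to the $nd$ nonzero entries. This factorization is legitimate because every valid support admits exactly $2^{nd}$ sign patterns. Conditioning on $S$, the task reduces to bounding the singularity probability of the random signed matrix with prescribed support under the product Rademacher measure.

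Conditionally on $S$, I would apply a Koml\'os-style row-exposure argument. Expose rows $r_1, \dots, r_{n-1}$ sequentially; if they are linearly independent (which one needs to verify inductively, incurring only a negligible additive error via a separate argument controlling the event that some earlier row lies in the span of its predecessors), their orthogonal complement is spanned by a unique unit vector $v = v(r_1, \dots, r_{n-1})$. Singularity of $M^{\pm}_{n,d}$ is then equivalent to $\langle r_n, v \rangle = 0$, and since the $d$ nonzero entries of $r_n$ are i.i.d.\ Rademacher random variables supported on the positions indexed by the last-row support $S_n$, the Erd\H{o}s--Littlewood--Offord inequality gives
\[
  \Pr(\langle r_n, v \rangle = 0 \mid r_1, \dots, r_{n-1}, S) \le O(m^{-1/2}),
\]
where $m := |\{j \in S_n : v_j \neq 0\}|$ is the effective number of coordinates of $v$ visible to $r_n$.

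The principal obstacle is controlling $m$: the null direction $v$ could, a priori, concentrate on coordinates outside $S_n$, or have very few nonzero entries altogether. To address this, I would partition the analysis according to the structure of $v$. For \emph{compressible} null vectors (those close to sparse), a separate argument is needed to rule them out with high probability; this would proceed via an $\varepsilon$-net over sparse directions combined with tail estimates for $\|M^{\pm}_{n,d} u\|$ that exploit the column regularity through switching arguments or Chernoff-type bounds on matrix--vector products under the regular measure. For \emph{incompressible} $v$, one argues that $v$ has at least $\Omega(n)$ coordinates of comparable magnitude, so that over the remaining randomness in $S_n$ (the support of the last row, which is a near-uniform $d$-subset of $[n]$ given column regularity), $m$ is typically of order $d$.

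The hardest step will be the incompressible-vector net argument: because the rows of $M^{\pm}_{n,d}$ are \emph{not} independent — the column regularity correlates them — the standard Rudelson--Vershynin route is not directly available, and one must use swap or sandwiching techniques to compare with a product Bernoulli model before applying anti-concentration. The gap between the $d^{-1/4}$ rate obtained and the naive $d^{-1/2}$ Littlewood--Offord rate plausibly emerges from a Cauchy--Schwarz tradeoff in this comparison, balancing the probability that $m$ is atypically small against the conditional Littlewood--Offord bound given a favourable value of $m$; optimising the split yields the exponent $1/4$ rather than $1/2$.
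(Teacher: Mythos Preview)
This theorem is not proved in the paper; it is quoted as prior work of Cook. The paper's own contribution for this model is the stronger \cref{thm:singularity-rrd}, established via the counting theorem (\cref{thm:counting-lemma}) rather than the compressible/incompressible dichotomy you outline. So there is no proof here to compare your proposal against directly.

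That said, your proposal contains a genuine gap in the incompressible step. You condition on the full support $S$ at the outset, which fixes $S_n$; there is no ``remaining randomness in $S_n$'' to average over once $S$ is given. Nor can you postpone conditioning on $S_n$: if you reveal only $S_1,\dotsc,S_{n-1}$ together with the signs of the first $n-1$ rows, the column-regularity constraint forces $S_n$ to be the deterministic set of columns currently holding exactly $d-1$ ones. Either way, $m = |S_n \cap \supp(v)|$ is not random at the moment you need it. The correct mechanism --- which the paper describes in \cref{sec:rrd-overview} and attributes, in a stronger form, to Cook --- is to first show that the base $S$ lies in a set $\EE_{n,d}$ of matrices satisfying a \emph{deterministic} expansion property (every large coordinate set meets almost every row's support in many places) with high probability, and then argue conditionally on $S \in \EE_{n,d}$. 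It is this expansion property, not any residual randomness in $S_n$, that forces $m$ to be large for incompressible $v$. Your compressible-vector sketch is also too thin to evaluate; the paper remarks that Cook's version of this step requires substantially stronger expansion hypotheses and draws on separate structural results for random regular digraphs.
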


To the best of our knowledge, Cook's result is the first to show that such matrices are invertible asymptotically almost surely, that is, with probability tending to one as $n$, the size of the matrix, tends to infinity. However, the upper bound on the probability of singularity is very weak. Indeed, Cook conjectured that when $d = \lceil r n \rceil $ for some fixed $ 0 < r \leq 1$, then the probability that $M^{\pm}_{n,d}$ is singular should be exponentially small. We make progress towards this conjecture by providing the first `exponential-type' upper bound on the singularity probability.

\begin{theorem}
  \label{thm:singularity-rrd}
  Fix an $r \in (0,1]$. For every integer $n$, let $d = \lceil rn \rceil$ and let $M^{\pm}_{n,d}$ be a uniformly random element of $\M^{\pm}_{n,d}$. There exists a constant $c > 0$ such that for all sufficiently large $n$,
  \[
    \Pr\left(M^{\pm}_{n,d} \text{ is singular}\right) \leq \exp\left(-n^c\right).
  \]
\end{theorem}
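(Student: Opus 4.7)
The plan is to combine the standard compressible/incompressible dichotomy for potential null vectors with the Halász--counting machinery of Theorems~\ref{thm:halasz-fp} and \ref{thm:counting-lemma}. The crucial feature of the r.r.d.\ model that makes this feasible is a two-stage sampling: a uniformly random $M^{\pm}_{n,d}$ can be generated by first drawing a uniformly random $d$-regular bipartite support pattern $(S_1,\dots,S_n)$, then assigning independent Rademacher signs to the $nd$ nonzero entries. Consequently, for any fixed $v \in \F_p^n$,
\[
  \Pr\bigl(M^{\pm}_{n,d} v \equiv 0 \bmod p \,\big|\, S_1,\dots,S_n\bigr) = \prod_{i=1}^n \rho_{\F_p}(v|_{S_i}),
\]
which lets Halász's inequality bite on each row individually.

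I would fix a prime $p$ with $\log p$ of order $n^{c_0}$ for a small constant $c_0 > 0$ to be tuned. Singularity of $M^{\pm}_{n,d}$ over $\Z$ implies that its kernel modulo $p$ contains a nonzero $v \in \F_p^n$, so it suffices to bound $\sum_{v \in \F_p^n \setminus \{0\}} \Pr(Mv \equiv 0 \bmod p)$. I would split this sum into \emph{compressible} vectors $v$ (close to a sparse vector, or nearly constant on a large block) and \emph{incompressible} vectors, dyadically subdividing the latter by the value of $\rho_{\F_p}(v)$. For the compressible class, a switching/swap argument in the spirit of Cook~\cite{cook2017singularity} --- leveraging the richness of $\M^{\pm}_{n,d}$ when $d = \Theta(n)$ --- should give an $\exp(-\Omega(n))$ bound on the probability that the kernel contains any such vector.

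For each dyadic scale $\rho_{\F_p}(v) \in [\rho, 2\rho]$ in the incompressible regime, applying Theorem~\ref{thm:halasz-fp} with $k$ of order $n^{c_0/2}$ and $M$ of order $n/k$, together with Lemma~\ref{lemma:R_k vs Rka}, yields that every sufficiently large subvector $\boldsymbol{b} \subseteq v$ satisfies $\Rka(\boldsymbol{b}) \gtrsim \rho \sqrt{M} \cdot 2^{2k} |\boldsymbol{b}|^{2k}$. Theorem~\ref{thm:counting-lemma} then bounds the number of such $v$ by roughly $(s/n)^{2k-1} p^s (\alpha \rho p \sqrt{M})^{-(n-s)}$, for $s$ close to $n$ to be optimized. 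On the probability side, a concentration-of-measure argument (using standard Lipschitz/Chernoff-type bounds for functionals of random $d$-regular bipartite graphs) shows that for most supports $(S_i)$ the restriction $v|_{S_i}$ inherits anti-concentration from $v$, namely $\rho_{\F_p}(v|_{S_i}) \le C\rho$, so the per-vector probability is at most $(C\rho)^{(1-o(1))n}$. Multiplying these estimates, optimizing $s$, and summing over the $O(n^{c_0})$ dyadic scales yields, for $c_0$ chosen sufficiently small, a total contribution of $\exp(-n^c)$ for some positive $c$.

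The main obstacle I foresee is the compressible case: unlike for i.i.d.\ Rademacher matrices, one cannot simply exploit row independence, and the column-regularity constraint introduces genuine correlations that force a model-specific combinatorial argument. A secondary, more technical, obstacle is establishing the concentration of $\rho_{\F_p}(v|_{S_i})$ around $\rho_{\F_p}(v)$ under the random $d$-regular bipartite graph distribution; this should follow from standard concentration for functions of random permutations or contiguity with i.i.d.\ $d$-subsets, but requires verifying the relevant Lipschitz-type conditions carefully in the regime $d = \Theta(n)$.
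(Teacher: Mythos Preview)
Your plan assembles the right ingredients --- the two-stage sampling, working over $\F_p$, and the Hal\'asz/counting machinery --- but it has a genuine gap in the incompressible regime that the paper's argument is specifically designed to avoid. You propose to control $\Pr(M^{\pm}_{n,d}\text{ singular over }\F_p)$ via the first moment $\sum_{\boldsymbol{v}\neq\boldsymbol{0}}\Pr(M\boldsymbol{v}\equiv\boldsymbol{0})$, stratified dyadically by $\rho_{\F_p}(\boldsymbol{v})$. The problem is the bottom scale $\rho_{\F_p}(\boldsymbol{v})\approx 1/p$. For such $\boldsymbol{v}$, Hal\'asz's inequality $\rho\le 1/p+CR_k/(2^{2k}n^{2k}\sqrt{M})+e^{-M}$ gives \emph{no} usable lower bound on $R_k^\alpha$, so Theorem~\ref{thm:counting-lemma} yields no nontrivial count --- there are essentially $p^n$ such vectors. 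On the probability side, your per-row bound is $\rho_{\F_p}(\boldsymbol{v}|_{S_i})$, and the sharpest Hal\'asz-type estimate available here (Lemma~\ref{lem:smallballdreg}) only gives $\rho_{\F_p}(\boldsymbol{v}|_{S_i})\lesssim n^{1-\delta}/p$ for some small $\delta>0$, never $O(1)/p$. So this single scale already contributes at least $p^n\cdot(n^{1-\delta}/p)^n=(n^{1-\delta})^n$, which swamps everything; dividing by $p-1$ does not help. Your concentration claim ``$\rho_{\F_p}(\boldsymbol{v}|_{S_i})\le C\rho_{\F_p}(\boldsymbol{v})$'' would only rescue this if it held with $C=1+o(1)$, for which there is no mechanism.

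The paper circumvents this by \emph{not} union-bounding over all $\boldsymbol{v}$. It uses the standard leave-one-out reduction: singularity forces some row $M_{i_0}'$ into the span $S_{i_0}'$ of the remaining rows, and conditionally on those rows, $\Pr(M_{i_0}'\in S_{i_0}')\le\rho_{\F_p}(\boldsymbol{v}\circ M_{i_0})$ for any $\boldsymbol{v}\in(S_{i_0}')^\perp$. One fixes a threshold $\rho=p^{-1/2}$ and splits: either every normal vector has $\rho_{\F_p}(\boldsymbol{v}\circ M_{i_0})<\rho$, in which case one gets the bound $\rho$ \emph{directly} without Hal\'asz; or some normal vector has $\rho_{\F_p}(\boldsymbol{v}\circ M_{i_0})\ge\rho$, and only then does one take a union bound --- restricted to the much smaller set $\{\boldsymbol{v}:\rho_{\F_p}(\boldsymbol{v}\circ M_{i_0})\ge p^{-1/2}\}$, which the counting theorem controls (Lemma~\ref{lemma:rrd-number-bad-wrt-1}). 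Generic vectors with $\rho\approx 1/p$ never enter. This leave-one-out step, rather than concentration of $\rho$ under random restriction, is the missing idea; the paper then handles the inheritance of anti-concentration to $\boldsymbol{v}\circ M_i$ not via a Lipschitz argument but by stratifying on the goodness parameter $h(\boldsymbol{v}\circ M_i)$ (Definitions~\ref{def:Ht-rrm} and~\ref{def:Btell-rrd}), combined with a deterministic expansion property of the base (\cref{lemma:rrd-expanding-base-whp}) that lets one cover $\supp(\boldsymbol{v})$ by a few row supports and apply Lemma~\ref{lemma:counting-bad} on each piece.
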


\begin{remark}
  Our proof method could provide a similar conclusion for much smaller values of $d$ (in particular, for $d = \Omega(n^{1-\ell})$ for some absolute constant $0 < \ell < 1$). However, in order to minimize technicalities and emphasize the main ideas, we will only present details for the case $d = \Theta(n)$.    
\end{remark}

\begin{remark}
  If we were to replace the application of \cref{thm:counting-lemma} in our proof of \cref{thm:singularity-rrd} with the counting corollary of the recent optimal inverse Littlewood--Offord theorem over finite fields due to Nguyen and Wood~\cite[Theorem 7.3]{nguyen2018random}, we would be able to deduce only the much weaker bound
  \[
    \Pr\left(M^{\pm}_{n,d} \text{ is singular}\right) = O_{C}(n^{-C})
  \]
  for every positive constant $C$. It is interesting to note that, proving an upper bound of the form $O_{C}(n^{-C})$ on the singularity probability for this model as well as the next one essentially requires the \emph{optimal} inverse Littlewood--Offord theorem.  
\end{remark}

\subsubsection{Singularity of random row-regular matrices}

For an even integer $n$, let $\QQ_n$ denote the set of $n \times n$ matrices $Q_n$ with entries in $\{0,1\}$ that satisfy the constraint
\[
  \sum_{j=1}^{n}Q_n(i,j) = \frac{n}{2}
\]
for each $i\in [n]$. Notice that $Q_n$ may be viewed as the bipartite adjacency matrix of a bipartite graph with parts of size $n$ such that each vertex on the left has exactly $n/2$ neighbors on the right. The probability of singularity of a uniformly random element of $\QQ_n$ was studied by Nguyen~\cite{nguyen2013singularity} as a relaxation of the singularity problem for the adjacency matrix of a random regular (di)graph; we refer the reader to the discussion there for further details about the motivation for studying this model and the associated technical challenges. Nguyen showed that the probability that $Q_n$ is singular decays faster than any polynomial.

\begin{theorem}[Nguyen~\cite{nguyen2013singularity}]
  For every even integer $n$, let $Q_n$ be a uniformly random element of $\QQ_n$. For every constant $C$,
  \[
    \Pr(Q_n \text{ is singular}) = O_{C}\big(n^{-C}\big).
  \]
\end{theorem}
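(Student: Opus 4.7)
The plan is to follow the Komlós-style structural dichotomy, adapted to the row-regular setting: reduce to $\F_p$ for a prime $p \in (n, 2n]$ with $p \nmid n/2$, and exploit the fact that, in this model, the $n$ rows of $Q_n$ are mutually independent (the row-sum constraint binds only within each row). Writing $r$ for a uniformly random balanced $\{0,1\}$-row of length $n$ and $\rho_*(w) := \sup_x \P_r(r \cdot w \equiv x \bmod p)$, we have $\P(w \in \ker Q_n) \le \rho_*(w)^n$ for every fixed $w \in \F_p^n$. Now condition on the first $n-1$ rows $Q_{n-1}$ and call a vector \emph{unstructured} if $\rho_*(w) \le n^{-(C+1)}$. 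On the event $\{\rk(Q_{n-1}) = n-1,\ w^* \text{ unstructured}\}$, where $w^*$ denotes the (almost surely unique, up to scalar) right null vector of $Q_{n-1}$, the conditional probability that $r_n \cdot w^* \equiv 0$ is at most $n^{-(C+1)}$; this contributes $O_C(n^{-C})$ to the total. The rank-drop event $\rk(Q_{n-1}) < n-1$ is handled recursively by applying the same framework to submatrices of smaller size.

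It remains to bound the contribution from structured null vectors:
\[
  \sum_{[w] \text{ structured}} \rho_*(w)^{n-1} \;=\; \sum_{k\,:\,2^{-k} > n^{-(C+1)}} \;\sum_{[w]\,:\,\rho_*(w) \in (2^{-k-1}, 2^{-k}]} \rho_*(w)^{n-1}.
\]
To count structured projective classes at each dyadic level, I convert $\rho_*$ into the Rademacher concentration $\rho_{\F_p}$ of a derived difference vector via a two-row coupling. If $r, r'$ are i.i.d.\ balanced rows, then $\rho_*(w)^2 \le \P((r-r')\cdot w \equiv 0)$. The vector $r - r'$ is supported on $S \cup T$, where $S = \{i : r_i > r'_i\}$ and $T = \{i : r_i < r'_i\}$ have equal size $m$. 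Conditional on $(S, T)$ and on a uniform bijection $\sigma \colon S \to T$ (sampled by a pairing coupling), one has $(r-r')\cdot w = \sum_{i \in S} \eta_i (w_i - w_{\sigma(i)})$ with i.i.d.\ Rademacher $\eta_i$. Since $m = \Theta(n)$ with probability $1 - e^{-\Omega(n)}$,
\[
  \rho_*(w)^2 \;\lesssim\; \E_{S, \sigma}\bigl[\rho_{\F_p}(d_{S,\sigma}(w))\bigr] + e^{-\Omega(n)}, \qquad d_{S,\sigma}(w) := (w_i - w_{\sigma(i)})_{i \in S}.
\]

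Hence for structured $w$ with $\rho_*(w) \asymp 2^{-k}$, one has $\rho_{\F_p}(d_{S,\sigma}(w)) \gtrsim 2^{-2k}$ for a $\ge 2^{-2k - O(1)}$-fraction of realizations $(S, \sigma)$. Halász's inequality over $\F_p$ (\cref{thm:halasz-fp}), applied with a large constant $k_0$ and $M = \Theta(m)$, converts this lower bound on $\rho_{\F_p}$ into a lower bound $R_{k_0}(d_{S,\sigma}(w)) \gtrsim 2^{-2k} \cdot 2^{2k_0} m^{2k_0} M^{1/2}$; \cref{lemma:R_k vs Rka} then promotes it to a comparable lower bound on $\Rka(d_{S,\sigma}(w))$ for a fixed $\alpha \in (0, 1)$. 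Because $\rho_{\F_p}$ is monotone under passage to subvectors, the same lower bound on $\Rka$ holds for every sufficiently long subvector of $d_{S,\sigma}(w)$, placing $d_{S,\sigma}(w) \in \Bad_{k_0, s, \ge t}^{\alpha}(m)$ for $s = \Theta(m)$ and $t \asymp 2^{-2k} n^{3/2}$. The counting theorem (\cref{thm:counting-lemma}) provides a strong upper bound on $|\Bad_{k_0, s, \ge t}^{\alpha}(m)|$; averaging over $(S, \sigma)$ and accounting for the fiber size $p^{n-m}$ of the lift $d \mapsto w$ translates this into an upper bound on the number of structured classes at level $2^{-k}$. Substituting back into the dyadic sum and summing over $k$ then yields the required $O_C(n^{-C})$ bound.

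The main obstacle is quantitative balancing at the final step: the counting bound from \cref{thm:counting-lemma} must simultaneously beat the fiber size $p^{n-m}$ of the lift $d \mapsto w$ and the reciprocal $\rho_*(w)^{-(n-1)} = 2^{k(n-1)}$ appearing in the union bound, uniformly across dyadic levels $k \le (C+1)\log_2 n$. As noted in the remark following \cref{thm:singularity-rrd}, achieving this for arbitrary $C$ essentially requires the full quantitative strength of an optimal inverse Littlewood--Offord theorem (or equivalently of the counting theorem of the present paper); a weaker inverse theorem would fall short of the polynomial bound.
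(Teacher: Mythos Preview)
First, a clarification: the theorem you are attempting is not proved in this paper---it is quoted from Nguyen~\cite{nguyen2013singularity}. The paper instead proves the strictly stronger \cref{thm:singularity-row-regular}, whose proof in \cref{sec:singularity-row-reg} of course implies Nguyen's bound. So the relevant comparison is with that argument.

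Your overall architecture (reduce modulo a prime, exploit the independence of the rows, split potential null vectors into structured and unstructured, and control structured vectors via Hal\'asz plus the counting theorem) is in the same spirit as the paper's. But your decoupling step contains a genuine error. You write that ``conditional on $(S,T)$ and on a uniform bijection $\sigma\colon S\to T$, one has $(r-r')\cdot w=\sum_{i\in S}\eta_i(w_i-w_{\sigma(i)})$ with i.i.d.\ Rademacher $\eta_i$.'' Once $S$ and $T$ are fixed, the vector $r-r'$ is completely determined (it equals $+1$ on $S$, $-1$ on $T$, $0$ elsewhere), so $(r-r')\cdot w=\sum_{i\in S}w_i-\sum_{j\in T}w_j$ is a constant; there is no residual Rademacher randomness, and sampling an auxiliary bijection cannot create any. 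A valid coupling does exist---condition on $U=S\cup T$, sample a uniformly random perfect matching on $U$, and then flip a fair coin on each edge to decide which endpoint lands in $S$---but this is not what you wrote, and in any case it is an unnecessary detour. The paper obtains the Rademacher structure directly from a \emph{single} row: representing a uniform balanced row $W_i$ via a uniform permutation $\sigma_i$ (the ``base'') and independent bits $\xi_i$, one has (see the display leading to~\eqref{eq:pairanticoncentration})
\[
  \sup_{x\in\F_p}\Pr(W_i\cdot\boldsymbol{v}=x\mid\sigma_i)\le\rho_{\F_p}(\boldsymbol{v}_{\sigma_i}),\qquad \boldsymbol{v}_{\sigma_i}=\big((v_{\sigma_i(2k-1)}-v_{\sigma_i(2k)})/2\big)_{k\le n/2}.
\]

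There is a second, independent gap. With your choice $p\in(n,2n]$, the additive $1/p$ term in \cref{thm:halasz-fp} is of order $1/n$, whereas the structured regime you must analyse goes down to $\rho_{\F_p}\asymp n^{-2(C+1)}$. For any $C>0$ this is far below $1/p$, so Hal\'asz yields no lower bound on $R_{k_0}$ and the passage to $\Bad_{k_0,s,\ge t}^{\alpha}$ is vacuous. The paper works with $p\asymp 2^{n^{0.1}}$ precisely so that the $1/p$ term is negligible throughout the relevant range; you would need at least $p\gg n^{2(C+1)}$. Beyond these two points, the paper also replaces your recursive rank-drop step by conditioning on an ``expanding'' base $\boldsymbol{\sigma}\in\EE_n$ (properties~\ref{item:Q1} and~\ref{item:Q2}), which is what allows the stratified counting via \cref{lemma:counting-bad} to close against the union bound.
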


Nguyen further conjectured~\cite[Conjecture 1.4]{nguyen2013singularity} that the probability that $Q_n$ is singular is $(2 + o(1))^{-n}$; note that this is the probability that two rows of $Q_n$ are the same. We make progress towards this conjecture by providing an `exponential-type' upper bound on the probability of singularity. 

\begin{theorem}
  \label{thm:singularity-row-regular}
  For every even integer $n$, let $Q_n$ be a uniformly random element of $\QQ_n$. There exists a contant $c > 0$ such that for all sufficiently large $n$,
  \[
    \Pr(Q_n \text{ is singular}) \leq \exp(-n^c).
  \]
\end{theorem}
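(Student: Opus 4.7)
The plan is to mirror the approach for \cref{thm:singularity-rrd}. Fix a small constant $c>0$, set $\rho^*:=\exp(-n^c)$, and choose by Bertrand's postulate a prime $p$ with $1/\rho^*\le p\le 2/\rho^*$. Since singularity of $Q_n$ over $\Z$ implies singularity over $\F_p$, it suffices to bound the latter. The rows $X_1,\dotsc,X_n$ of $Q_n$ are i.i.d.\ uniform on $\mathcal{X}_n:=\{x\in\{0,1\}^n:\sum_i x_i=n/2\}$, so by exchangeability and the union bound,
\[
  \Pr[Q_n\text{ singular over }\F_p]\;\le\;n\cdot\Pr\bigl[X_n\in\Span_{\F_p}(X_1,\dotsc,X_{n-1})\bigr].
\]
Conditioning on $X_1,\dotsc,X_{n-1}$, I would split according to whether their $\F_p$-span has dimension exactly $n-1$ (yielding a unique, up to scalar, normal vector $v\in\F_p^n\setminus\{0\}$) or at most $n-2$; the latter ``rank-deficient'' event I would handle by running the same argument on the $(n-1)\times n$ row-regular submatrix.

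The key device for passing from a row-regular marginal to a Rademacher sum is a pairing trick: a uniform $X\in\mathcal{X}_n$ can be sampled by first choosing a uniformly random perfect matching $\pi$ of $[n]$ and then, independently for each edge $e=\{i,j\}\in\pi$, drawing a Rademacher $\epsilon_e$ and setting $(X_i,X_j)=(\tfrac{1+\epsilon_e}{2},\tfrac{1-\epsilon_e}{2})$. Under this representation,
\[
  v\cdot X\;=\;\tfrac{1}{2}\sum_{i=1}^n v_i\;+\;\sum_{\{i,j\}\in\pi}\epsilon_{\{i,j\}}\cdot\tfrac{v_i-v_j}{2},
\]
so conditionally on $\pi$, $\Pr[v\cdot X\equiv 0\pmod p]\le\rho_{\F_p}(v_\pi)$, where $v_\pi\in\F_p^{n/2}$ is the vector of paired differences $(v_i-v_j)/2$ indexed by the edges of $\pi$. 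Splitting at the threshold $\rho^*$ gives
\[
  \Pr[X_n\in\Span(X_1,\dotsc,X_{n-1})\mid v]\;\le\;\rho^*+\Pr_\pi\bigl[\rho_{\F_p}(v_\pi)>\rho^*\bigr].
\]

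The counting step then combines Hal\'asz's inequality with our main counting theorem. By \cref{thm:halasz-fp} (applied with $M\asymp\log(1/\rho^*)=\Theta(n^c)$ and $k$ growing mildly with $n$), \cref{lemma:R_k vs Rka}, and the hereditary inequality $\rho_{\F_p}(b)\ge\rho_{\F_p}(v_\pi)$ for every subvector $b\subseteq v_\pi$, the event $\{\rho_{\F_p}(v_\pi)>\rho^*\}$ forces $v_\pi\in\Bad_{k,s,\ge t}^{\alpha}(n/2)$ for suitable parameters $(k,s,t,\alpha)$. For any such ``structured'' $v$, the probability that $v$ arises as the normal vector of $(X_1,\dotsc,X_{n-1})$ is at most $\Pr[v\cdot X_i\equiv 0 \text{ for every } i\le n-1]$; applying the pairing identity to each row independently and bounding each factor via Hal\'asz produces an exponentially small estimate. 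Summing over $v$ using the bound
\[
  |\Bad_{k,s,\ge t}^{\alpha}(n/2)|\;\le\;\bigl(s/(n/2)\bigr)^{2k-1}(\alpha t)^{s-n/2}\,p^{n/2}
\]
from \cref{thm:counting-lemma}, lifted back to $v\in\F_p^n$ at the cost of a $p^{n/2}$ factor per matching fibre, and optimizing $(k,s,t,\alpha)$ against $\rho^*$ yields the desired bound $\exp(-n^c)$.

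I expect the main obstacle to be dovetailing the pairing trick with the counting theorem. \cref{thm:counting-lemma} controls vectors in $\F_p^{n/2}$ for which \emph{every} large subvector has many additive relations, yet $v_\pi$ depends on $\pi$, and a single $v\in\F_p^n$ produces many different $v_\pi$. The delicate step is to show that if $\E_\pi[\rho_{\F_p}(v_\pi)]$ is large then a positive fraction of matchings put $v_\pi$ in the bad set, and then to translate this averaged structural property back into a workable count on $v$ itself through a swap/replacement argument on $(X_1,\dotsc,X_{n-1})$ in the spirit of the one carrying the proof of \cref{thm:singularity-rrd}. Parameters must be calibrated so that the constraints $30M\le|\supp(v_\pi)|$ and $80kM\le n/2$ in \cref{thm:halasz-fp} are satisfied while the decay $(\alpha t)^{s-n/2}$ in the counting bound dominates the trivial $p^n$ union bound.
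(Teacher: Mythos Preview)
Your framework is close to the paper's, and the pairing identity you write down is exactly the right device. The gap is the one you flag yourself, and the paper's resolution of it is a genuine idea that your outline does not contain. You propose to average over the matching $\pi$ row by row and then somehow translate an averaged structural statement about $v_\pi$ back to a count on $v$; but the counting theorem only controls vectors in $\F_p^{n/2}$, and the fibre of the map $v\mapsto v_\pi$ has size $p^{n/2}$, so a single matching cannot pin $v$ down tightly enough for the union bound to close. The paper does \emph{not} average over $\pi$. Instead it decomposes $Q_n$ as a pair $(\boldsymbol{\sigma},\boldsymbol{\xi})$ where $\boldsymbol{\sigma}=(\sigma_1,\dotsc,\sigma_n)$ is a fixed sequence of matchings (the ``base'') and $\boldsymbol{\xi}$ carries the Rademacher randomness; it then conditions on $\boldsymbol{\sigma}$ lying in an explicit ``expanding'' set $\EE_n$, one of whose defining properties is that the union of any two matchings $\sigma_i\cup\sigma_j$ has at most $n^{0.6}$ connected components.

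This is the missing mechanism: once the base is fixed and expanding, the $n$ maps $v\mapsto v_{\sigma_i}$ are deterministic, and for a ``bad'' $v$ one finds \emph{two} indices $i_1,i_2$ with $v_{\sigma_{i_1}},v_{\sigma_{i_2}}\notin\boldsymbol{H}_t$. \cref{lemma:counting-bad} bounds each of $v_{\sigma_{i_1}},v_{\sigma_{i_2}}$ to a set of size roughly $(p/t)^{n/2}$, and because $\sigma_{i_1}\cup\sigma_{i_2}$ has few components, the pair $(v_{\sigma_{i_1}},v_{\sigma_{i_2}})$ determines $v$ up to a factor $p^{n^{0.6}}$. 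Thus the number of bad $v$ is roughly $(p/t)^n$ rather than $(p/t)^{n/2}\cdot p^{n/2}$, and this is exactly what is needed to beat the product $\prod_i\rho_{\F_p}(v_{\sigma_i})\approx(t/p)^n$. Your ``swap/replacement'' suggestion does not supply this; the analogue in the r.r.d.\ proof you cite also works by first conditioning on a fixed expanding base, not by averaging over it. Without the fixed-base conditioning and the two-matchings-recover-$v$ step, the counting part of your argument does not close.
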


\begin{remark}
  Nguyen's theorem, as well as ours, continues to hold in the more general case when the sum of each row is $d$ (instead of $n/2$) for a much wider range of $d$. Here, as in \cite{nguyen2011optimal}, we have chosen to restrict ourselves to the case when $n$ is even and $d=n/2$ for ease of exposition.  
\end{remark}

\subsubsection{Further directions and related work}
\label{sec:further-directions}

The methods we use in this paper can be further developed in various directions. %to study other models of discrete random matrices. 
In a recent work~\cite{ferber2018singularity}, the first two named authors utilized and extended some of the ideas introduced here in order to provide the best known upper bound for the well studied problem of estimating the singularity probability of random \emph{symmetric} $\{\pm 1\}$-valued matrices, and in upcoming work \cite{jain2019combinatorial}, the second named author uses some of the results in this paper to study the non-asymptotic behavior of the least singular value of different models of discrete random matrices. In another upcoming work of the second named author \cite{jain2019b}, it is shown how to extend the techniques introduced here and in \cite{jain2019combinatorial} to study not-necessarily-discrete models of random matrices. We also anticipate that the techniques presented here (along with some additional combinatorial ideas) should suffice to provide an `exponential-type' upper bound on the probability of singularity of the adjacency matrix of a dense random regular digraph, thereby making substantial progress towards a conjecture of Cook~\cite[Conjecture~1.7]{cook2017singularity}. 

\bigskip
\noindent
\textbf{Organization:} The rest of this paper is organized as follows. \cref{sec:pf-counting-thm} is devoted to the proof of \cref{thm:counting-lemma}. In~\cref{sec:good-bad-vectors}, we formulate and prove abbreviated, easy-to-use versions of \cref{thm:halasz-fp,thm:counting-lemma}. \cref{sec:rrd,sec:singularity-row-reg} are devoted to the proofs of \cref{thm:singularity-rrd,thm:singularity-row-regular}, respectively. We provide detailed proof outlines at the start of both \cref{sec:rrd,sec:singularity-row-reg}. Finally, \cref{app:halasz} contains the proof of Hal\'asz's inequality over $\F_p$ (\cref{thm:halasz-fp}).

\bigskip
\noindent
\textbf{Notation: }Throughout this paper, we will routinely omit floor and ceiling signs when they make no essential difference. As is standard, we will use $[n]$ to denote the discrete interval $\{1,\dots,n\}$. We will also use the asymptotic notation $\lesssim, \gtrsim, \ll, \gg$ to denote $O(\cdot), \Omega(\cdot), o(\cdot), \omega(\cdot)$ respectively. All logarithms are natural unless noted otherwise.

\bigskip
\noindent \textbf{Acknowledgements: } A.F. is partially supported by NSF 6935855, V.J. is partially supported by NSF CCF 1665252, NSF DMS-1737944, and ONR N00014-17-1-2598, K.L. is partially supported by NSF DMS-1702533, and W.S. is partially supported by grants 1147/14 and 1145/18 from the Israel Science Foundation. W.S. would like to thank Elchanan Mossel and the MIT Mathematics Department for their hospitality during a period when part of this work was completed.

\section{Proof of the counting theorem}
\label{sec:pf-counting-thm}

In this section, we prove~\cref{thm:counting-lemma} using an elementary double counting argument.

\begin{proof}[Proof of~\cref{thm:counting-lemma}]
  Let $\cZ$ be the set of all triples
  \[
    \left(I, \left(i_{s+1},\dots,i_{n}\right), \left(F_{j},{\boldsymbol{\epsilon}}^{j}\right)_{j=s+1}^{n} \right),
  \]
  where
  \begin{enumerate}[{label=(\roman*)}]
  \item $I \subseteq [n]$ and $|I|=s$, 
  \item $(i_{s+1},\dotsc,i_n) \in [n]^{n-s}$ is a permutation of $[n]\setminus I$,
  \item each $F_{j}:=(\ell_{j,1},\dotsc,\ell_{j,2k})$ is a sequence of $2k$ elements of $[n]$, and
  \item $\boldsymbol{\epsilon}^j\in \{\pm 1\}^{2k}$ for each $j$,
  \end{enumerate}
  that satisfy the following conditions for each $j$:
  \begin{enumerate}[{label=(\alph*)}]
  \item
    \label{item:Z-condition-1}
    $\ell_{j,2k}= i_{j}$ and
  \item
    \label{item:Z-condition-2}
    $(\ell_{j,1},\dotsc,\ell_{j,2k-1}) \in \big(I\cup \{i_{s+1},\dotsc,i_{j-1}\}\big)^{2k-1}$.
  \end{enumerate}

  \begin{claim}
    The number of triples in $\cZ$ is at most $(s/n)^{2k-1} \cdot \big(2^{n-s} n! / s!\big)^{2k}$.
  \end{claim}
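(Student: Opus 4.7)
The plan is to bound $|\cZ|$ by the straightforward sequential count: choose $I$, then the permutation $(i_{s+1},\dots,i_n)$ of $[n]\setminus I$, and then the pairs $(F_j,\boldsymbol{\epsilon}^j)$ one $j$ at a time in increasing order. The number of choices for $I$ is $\binom{n}{s}$ and the number of permutations is $(n-s)!$. For each $j\in\{s+1,\dots,n\}$, condition \ref{item:Z-condition-1} fixes the last entry $\ell_{j,2k}=i_j$, while condition \ref{item:Z-condition-2} requires $(\ell_{j,1},\dots,\ell_{j,2k-1})$ to lie in $I\cup\{i_{s+1},\dots,i_{j-1}\}$. The one point to check here is that this set has exactly $j-1$ elements, which follows because $(i_{s+1},\dots,i_n)$ is a permutation of $[n]\setminus I$, so the $j-1-s$ indices $i_{s+1},\dots,i_{j-1}$ are pairwise distinct and disjoint from $I$. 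Hence $F_j$ contributes $(j-1)^{2k-1}$ choices, and $\boldsymbol{\epsilon}^j$ contributes $2^{2k}$.

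Multiplying, this gives
\[
|\cZ| \;=\; \binom{n}{s}\,(n-s)!\cdot\prod_{j=s+1}^{n}(j-1)^{2k-1}\cdot 2^{2k(n-s)}.
\]
The next step is to collapse this product into the form in the claim. Using $\binom{n}{s}(n-s)!=n!/s!$ and $\prod_{j=s+1}^{n}(j-1)=(n-1)!/(s-1)!=(s/n)\cdot(n!/s!)$, the product over $j$ becomes $(s/n)^{2k-1}(n!/s!)^{2k-1}$. Combining with the leading factor $n!/s!$ and the factor $2^{2k(n-s)}$ yields exactly $(s/n)^{2k-1}\bigl(2^{n-s}n!/s!\bigr)^{2k}$, as required (in fact the inequality is an equality).

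The whole argument is a direct count, so there is no real obstacle; the only thing to be careful about is identifying the size $j-1$ of the pool from which $(\ell_{j,1},\dots,\ell_{j,2k-1})$ is drawn (this is what produces the decisive $(s/n)^{2k-1}$ saving), and correctly telescoping $\prod_{j=s+1}^n(j-1)$ in terms of factorials. The real content of \cref{thm:counting-lemma} is of course the double-counting setup that motivates the definition of $\cZ$ in the first place; this claim itself is just the bookkeeping half of that argument.
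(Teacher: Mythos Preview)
Your proof is correct and follows essentially the same approach as the paper's: a direct sequential count of $I$, the ordering of $[n]\setminus I$, and then the pairs $(F_j,\boldsymbol{\epsilon}^j)$, followed by the same factorial simplification. The only cosmetic difference is that the paper interleaves the choice of each $i_j$ with that of $(F_j,\boldsymbol{\epsilon}^j)$, writing $\prod_{j=s+1}^n (n-j+1)$ rather than your upfront $(n-s)!$, but these are identical.
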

  \begin{proof}
    One can construct any such triple as follows. First, choose an $s$-element subset of $[n]$ to serve as $I$. Second, considering all $j \in \{s+1, \dotsc, n\}$ one by one in increasing order, choose: one of  the $n-j+1$ remaining elements of $[n] \setminus I$ to serve as $i_j$; one of the $2^{2k}$ possible sign patterns to serve as $\boldsymbol{\epsilon}^{j}$; and one of the $(j-1)^{2k-1}$ sequences of $2k-1$ elements of $I \cup \{i_{s+1},\dots,i_{j-1}\}$ to serve as $(\ell_{j,1},\dotsc,\ell_{j,2k-1})$. Therefore,
    \begin{align*}
      |\cZ| & \le \binom{n}{s} \cdot \prod_{j=s+1}^n \left((n-j+1) \cdot 2^{2k} \cdot (j-1)^{2k-1}\right) \\
            & = \frac{n!}{s!(n-s)!} \cdot (n-s)! \cdot 2^{2k(n-s)} \cdot \left(\frac{(n-1)!}{(s-1)!}\right)^{2k-1} = \left(\frac{s}{n}\right)^{2k-1} \cdot \left(2^{n-s} \cdot \frac{n!}{s!}\right)^{2k}.\qedhere
    \end{align*}
  \end{proof}
  
  We call $\boldsymbol{a} = (a_1, \dotsc, a_n) \in \F_p^n$ \emph{compatible} with a triple from $\cZ$ if for every $j \in \{s+1, \dotsc, n\}$,
  \begin{equation}
    \label{eq:compatibility}
    \sum_{i=1}^{2k}\boldsymbol\epsilon^{j}_ia_{\ell_{j,i}}=0.
  \end{equation}
  
  \begin{claim}
    Each triple from $\cZ$ is compatible with at most $p^s$ sequences $\boldsymbol{a} \in \F_p^n$.
  \end{claim}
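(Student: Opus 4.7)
The plan is to observe that the triples in $\cZ$ have been set up precisely so that they describe a triangular system of linear equations: once one fixes the coordinates $(a_i)_{i \in I} \in \F_p^I$ freely, the remaining coordinates of any compatible $\boldsymbol{a}$ are forced one at a time in the order $j = s+1, s+2, \dotsc, n$. Since $|I| = s$, this will immediately yield the bound $p^s$.

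In detail, I would process the equations \eqref{eq:compatibility} in increasing order of $j$. For a fixed $j \in \{s+1,\dotsc,n\}$, condition \ref{item:Z-condition-1} gives $\ell_{j,2k} = i_j$, so the term $\boldsymbol{\epsilon}^j_{2k} a_{i_j}$ appears in the $j$-th compatibility equation, and its coefficient $\boldsymbol{\epsilon}^j_{2k} \in \{\pm 1\}$ is invertible in $\F_p$ (here it is convenient that $p$ is odd, though even $p=2$ would cause no issue because $\pm 1 = 1$). By condition \ref{item:Z-condition-2}, every other index $\ell_{j,1},\dotsc,\ell_{j,2k-1}$ lies in $I \cup \{i_{s+1},\dotsc,i_{j-1}\}$. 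Hence the equation \eqref{eq:compatibility} can be rewritten as
\[
  a_{i_j} \;=\; -\boldsymbol{\epsilon}^j_{2k}\sum_{i=1}^{2k-1}\boldsymbol{\epsilon}^j_i\, a_{\ell_{j,i}},
\]
where the right-hand side depends only on the coordinates $a_i$ with $i \in I$ and on the coordinates $a_{i_{s+1}}, \dotsc, a_{i_{j-1}}$ that have already been determined by the equations for $s+1,\dotsc,j-1$.

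Thus, by a straightforward induction on $j$, the compatible vectors $\boldsymbol{a} \in \F_p^n$ are in (at most) one-to-one correspondence with the free choices of $(a_i)_{i \in I} \in \F_p^s$, of which there are $p^s$. This completes the proof of the claim. The argument has no genuine obstacle; the only subtlety is verifying that conditions \ref{item:Z-condition-1} and \ref{item:Z-condition-2} really do force the promised triangular structure, which is what I have just done.
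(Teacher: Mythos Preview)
Your proof is correct and follows essentially the same approach as the paper: both exploit that conditions~\ref{item:Z-condition-1} and~\ref{item:Z-condition-2} make the compatibility equations triangular, so that after freely choosing $(a_i)_{i\in I}\in\F_p^s$, the remaining coordinates $a_{i_{s+1}},\dotsc,a_{i_n}$ are determined successively. Your write-up is slightly more explicit about the induction and the invertibility of $\boldsymbol{\epsilon}^j_{2k}$, but the argument is the same.
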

  \begin{proof}
    Using~\ref{item:Z-condition-1}, we may rewrite~\cref{eq:compatibility} as
    \[
      \boldsymbol{\epsilon}^{j}_{2k}a_{i_{j}} = -\sum_{i=1}^{2k-1}\boldsymbol\epsilon^{j}_ia_{\ell_{j,i}}.
    \]
    It follows from~\ref{item:Z-condition-2} that once a triple from $\cZ$ is fixed, the right-hand side above depends only on those coordinates of the vector $\boldsymbol{a}$ that are indexed by $i \in I \cup \{i_{s+1}, \dotsc, i_{j-1}\}$. In particular, for each of the $p^s$ possible values of $(a_i)_{i \in I}$, there is exactly one way to extend it to a sequence $\boldsymbol{a} \in \F_p^n$ that satisfies~\cref{eq:compatibility} for every $j$.
  \end{proof}

  \begin{claim}
    Each sequence $\boldsymbol{a} \in\Bad_{k,s, \ge t}^\alpha$ is compatible with at least
    \[
     \left(\frac{2^{n-s}n!}{s!}\right)^{2k} \cdot \left(\frac{\alpha t}{p}\right)^{n-s}
    \]
    triples from $\cZ$.
  \end{claim}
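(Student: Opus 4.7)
The plan is to build many compatible triples for a fixed $\boldsymbol{a}\in \Bad^\alpha_{k,s,\geq t}$ by choosing the data $(i_j, F_j, \boldsymbol{\epsilon}^j)$ inductively for $j$ running from $n$ down to $s+1$. Having already selected $i_{j+1}, \dotsc, i_n$, set $B_j := [n]\setminus\{i_{j+1}, \dotsc, i_n\}$, a set of cardinality $j$, and let $\mathcal{C}_j(B_j)$ denote the collection of triples $(i_j, F_j, \boldsymbol{\epsilon}^j)$ with $i_j \in B_j$, $\ell_{j,2k}=i_j$, $(\ell_{j,1},\dotsc,\ell_{j,2k-1}) \in (B_j\setminus\{i_j\})^{2k-1}$, and $\sum_{i=1}^{2k}\boldsymbol{\epsilon}^j_i a_{\ell_{j,i}}\equiv 0 \pmod p$. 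Every triple from $\cZ$ compatible with $\boldsymbol{a}$ arises this way, so if one can show that $|\mathcal{C}_j(B_j)|$ admits a lower bound depending only on $|B_j|=j$, then the total count of compatible triples is at least the product of those bounds over $j=s+1, \dotsc, n$.

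The heart of the argument is the inequality $|\mathcal{C}_j(B_j)|\geq \alpha\cdot \Rka(\boldsymbol{a}|_{B_j})$ for every $B_j$ with $|B_j|\geq s+1$. To establish it, I introduce the swap map $\phi$ sending a pair $(T,m)$, where $T=(\boldsymbol{\ell},\boldsymbol{\eta})$ is a solution counted by $\Rka(\boldsymbol{a}|_{B_j})$ and $m\in[2k]$ is a position whose value $\ell_m$ appears only once in the tuple $\boldsymbol{\ell}$, to the triple produced by exchanging positions $m$ and $2k$ in both $\boldsymbol{\ell}$ and $\boldsymbol{\eta}$ and setting $i_j:=\ell_m$. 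The image lies in $\mathcal{C}_j(B_j)$ because the exchange places $\ell_m$ at position $2k$ and the remaining entries avoid $\ell_m$ (by uniqueness of $m$), while the relation $\sum\eta_i a_{\ell_i}=0$ is unaffected by the swap. A pigeonhole count shows that every $T\in \Rka(\boldsymbol{a}|_{B_j})$ has at least $2\alpha k$ unique positions: letting $D_1$ be the number of positions with multiplicity-one value, all other positions have multiplicity $\geq 2$, so the number of distinct entries is at most $D_1+(2k-D_1)/2=k+D_1/2$, which must be $\geq (1+\alpha)k$, forcing $D_1\geq 2\alpha k$.

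Conversely, $\phi$ is exactly $2k$-to-$1$ onto its image. Indeed, for any $C=(i_j,F_j,\boldsymbol{\epsilon}^j)\in\mathcal{C}_j(B_j)$ whose sequence has at least $(1+\alpha)k$ distinct entries, and for \emph{every} $m\in[2k]$, swapping positions $m$ and $2k$ of $C$ yields a preimage $T_m\in \Rka(\boldsymbol{a}|_{B_j})$ in which position $m$ is unique, since $i_j$ (now sitting at position $m$) is forbidden from the first $2k-1$ slots by the defining condition of $\mathcal{C}_j(B_j)$; and these $2k$ preimages are pairwise distinct. Comparing the sizes of the domain and image of $\phi$,
\[
  2\alpha k\cdot \Rka(\boldsymbol{a}|_{B_j})\ \leq\ |\operatorname{dom}(\phi)|\ =\ 2k\cdot|\operatorname{im}(\phi)|\ \leq\ 2k\cdot|\mathcal{C}_j(B_j)|,
\]
so $|\mathcal{C}_j(B_j)|\geq \alpha\cdot \Rka(\boldsymbol{a}|_{B_j})$. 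Since $|B_j|=j\geq s+1>s$, the $\Bad$-hypothesis applied to $\boldsymbol{b}:=\boldsymbol{a}|_{B_j}$ gives $|\mathcal{C}_j(B_j)|\geq \alpha t\cdot 2^{2k}j^{2k}/p$, a quantity depending only on $j$, and multiplying over $j=s+1,\dotsc,n$ yields
\[
  \prod_{j=s+1}^n \frac{\alpha t\cdot 2^{2k}j^{2k}}{p}\ =\ \left(\frac{2^{n-s}n!}{s!}\right)^{2k}\cdot\left(\frac{\alpha t}{p}\right)^{n-s},
\]
matching the claim.

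The main obstacle, as I see it, is getting the $2k$-to-$1$ bookkeeping right: one must reconcile the asymmetry of $\mathcal{C}_j(B_j)$ (the slot $2k$ is distinguished and excluded from the other $2k-1$ slots) with the symmetric tuples counted by $\Rka$, check that swapping preserves uniqueness of the distinguished position, and verify that all $2k$ preimages of each $C$ are genuinely different. Once this accounting is done correctly, the desired factor of $\alpha$ (rather than, say, something like $\alpha/k$) drops out cleanly, and the rest is elementary multiplication.
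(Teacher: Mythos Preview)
Your proof is correct and follows essentially the same approach as the paper's: build compatible triples by iterating $j$ from $n$ down to $s+1$, at each step invoking the $\Bad$-hypothesis on $\boldsymbol{a}|_{B_j}$ to get at least $t\cdot 2^{2k}j^{2k}/p$ solutions in $\Rka$, and then arguing that an $\alpha$-fraction of these give a valid choice with $\ell_{2k}$ non-repeated. The only difference is stylistic: where the paper justifies the $\alpha$-fraction in one line (``the set of solutions is closed under permutations of the $\ell_i$'s, so $\ell_{2k}$ is non-repeated in at least an $\alpha$-proportion''), you unpack this symmetry into the explicit swap map $\phi$ and verify it is exactly $2k$-to-$1$ onto its image---which is precisely the double-counting underlying the paper's symmetry remark.
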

  \begin{proof}
    Given any such $\boldsymbol{a}$, we may construct a compatible triple from $\cZ$ as follows. Considering all $j \in \{n, \dotsc, s+1\}$ one by one in decreasing order, we do the following. First, we find an arbitrary solution to
    \begin{equation}
      \label{eq:a-ell-solution}
      \pm a_{\ell_1} \pm a_{\ell_2} \pm \dotsb \pm a_{\ell_{2k}} = 0
    \end{equation}
    such that $\ell_1, \dotsc, \ell_{2k} \in [n]\setminus \{i_{n},\dots,i_{j+1}\}$ and such that $\ell_{2k}$ is a non-repeated index (i.e., such that $\ell_{2k} \neq \ell_i$ for all $i \in [2k-1]$). Given any such solution, we let $\ell_{2k}$ serve as $i_j$, we let the sequence $(\ell_1, \dotsc, \ell_{2k})$ serve as $F_j$, and we let $\boldsymbol{\epsilon}^j$ be the corresponding sequence of signs (so that~\cref{eq:compatibility} holds). The assumption that $\boldsymbol{a} \in \Bad_{k,s,\geq t}^\alpha(n)$ guarantees that there are at least  $t \cdot \frac{2^{2k}\cdot (n-j+1)^{2k}}{p}$ many solutions to~\cref{eq:a-ell-solution}, each of which has at least $2\alpha k$ nonrepeated indices. Since the set of all such solutions is closed under every permutation of the $\ell_i$s (and the respective signs), $\ell_{2k}$ is a non-repeated index in at least an $\alpha$-proportion of them. Finally, we let $I = [n] \setminus \{i_n, \dotsc, i_{s+1}\}$. Since different sequences of solutions lead to different triples, it follows that the number $Z$ of compatible triples satisfies
    \[
      Z \ge \prod_{j = s+1}^{n} \left(\alpha t \cdot \frac{2^{2k} \cdot (n-j+1)^{2k}}{p}\right) = \left(\frac{2^{n-s}n!}{s!}\right)^{2k}\cdot \left(\frac{\alpha t}{p}\right)^{n-s}.\qedhere
    \]
  \end{proof}

  Counting the number $P$ of pairs of $\boldsymbol{a} \in \Bad_{k, s, \ge t}^\alpha(n)$ and a compatible triple from $\cZ$, we have
  \[
    |\Bad_{k, s, \ge t}^\alpha(n)| \cdot \left(\frac{2^{n-s}n!}{s!}\right)^{2k} \cdot \left(\frac{\alpha t}{p}\right)^{n-s}\le P \le |\cZ| \cdot p^s \le \left(\frac{s}{n}\right)^{2k-1} \cdot \left(\frac{2^{n-s}n!}{s!}\right)^{2k} \cdot p^s,
  \]
  which yields the desired upper bound on $|\Bad_{k, s, \ge t}^\alpha(n)|$.
\end{proof}

\section{`Good' and `bad' vectors}
\label{sec:good-bad-vectors}

The purpose of this section is to formulate easy-to-use versions of Hal\'asz's inequality (\cref{thm:halasz-fp}) and our counting theorem (\cref{thm:counting-lemma}). We shall partition $\F_p^*$ -- the set of all finite-dimensional vectors with $\F_p$-coefficients -- into `good' and `bad' vectors. We shall then show that, on the one hand, every `good' vector has small largest atom probability and that, on the other hand, there are relatively few `bad' vectors.\footnote{In fact, we shall only show that there are relatively few `bad' vectors that have some number of nonzero coordinates. The number of remaining vectors (ones with very small support) is so small that even a crude, trivial estimate will suffice for our needs.} The formal statements now follow. In order to simplify the notation, we suppress the implicit dependence of the defined notions on $k$, $p$, and $\alpha$.

\begin{definition}
  \label{def:Ht-rrm}
  Suppose that an integer $k$, a prime number $p$, and an $\alpha \in (0,1)$ are given. For any $t>0$, define the set $H_t$ of \emph{$t$-good} vectors by
  \[
    \boldsymbol{H}_t:= \left\{\boldsymbol{a}\in \F_p^*: \exists \boldsymbol{b}\subseteq \boldsymbol{a} \text{ with }|\supp(\boldsymbol{b})|\geq |\boldsymbol{a}|^{1/4} \text{ and }\Rka(\boldsymbol{b})\leq t\cdot \frac{2^{2k} \cdot |\boldsymbol{b}|^{2k}}{p}\right\}.
  \]
  The \emph{goodness} of a vector $\boldsymbol{a} \in \F_p^*$, denoted by $h(\boldsymbol{a})$, will be the smallest $t$ such that $\boldsymbol{a} \in \boldsymbol{H}_t$. In other words
  \[
    h(\boldsymbol{a}) = \min\left\{\frac{p  \cdot \Rka(\boldsymbol{b})}{2^{2k} \cdot |\boldsymbol{b}|^{2k}} : \boldsymbol{b} \subseteq \boldsymbol{a} \text{ and } |\supp(\boldsymbol{b})| \ge |\boldsymbol{a}|^{1/4}\right\}.
  \]
\end{definition}

Note that if a vector $\boldsymbol{a} \in \F_p^*$ has fewer than $|\boldsymbol{a}|^{1/4}$ nonzero coordinates, then it cannot be $t$-good for any $t$ and thus $h(\boldsymbol{a}) = \infty$. On the other hand, since trivially $\Rka(\boldsymbol{b}) \le 2^{2k} \cdot |\boldsymbol{b}|^{2k}$ for every vector $\boldsymbol{b}$, every $\boldsymbol{a} \in \F_p^*$ with at least $|\boldsymbol{a}|^{1/4}$ nonzero coordinates must be $p$-good, that is, $h(\boldsymbol{a}) \le p$ for each such $\boldsymbol{a}$.

Having formalized the notion of a good vector, we are now ready to state and prove two corollaries of \cref{thm:halasz-fp,thm:counting-lemma} that lie at the heart of our approach to the singularity problem.

\begin{lemma}
  \label{lem:smallballdreg}
  Suppose that $\boldsymbol{a} \in \boldsymbol{H}_t$. If $t \geq |\boldsymbol{a}|^{1/4}$, $k \le |\boldsymbol{a}|^{1/8}$, and $p \le 2^{k/100}$, then
  \[
    \rho_{\F_p}(\boldsymbol{a}) \le \frac{Ct}{p|\boldsymbol{a}|^{1/16}},
  \]
  where $C = C(\alpha)$ is a constant that depends only on $\alpha$.
\end{lemma}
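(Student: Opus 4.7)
The plan is to reduce to the witnessing subvector and then apply Hal\'asz's inequality over $\F_p$. Concretely, by the definition of $\boldsymbol{H}_t$, I fix $\boldsymbol{b} \subseteq \boldsymbol{a}$ with $|\supp(\boldsymbol{b})| \ge |\boldsymbol{a}|^{1/4}$ and $\Rka(\boldsymbol{b}) \le t \cdot 2^{2k}|\boldsymbol{b}|^{2k}/p$. Since the sum defining $\rho_{\F_p}(\boldsymbol{a})$ is obtained from that of $\rho_{\F_p}(\boldsymbol{b})$ by convolution with an independent summand, the usual monotonicity gives $\rho_{\F_p}(\boldsymbol{a}) \le \rho_{\F_p}(\boldsymbol{b})$, and it suffices to bound the latter.

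Write $n := |\boldsymbol{a}|$ and $m := |\boldsymbol{b}|$, and set $M := n^{1/8}/80$. The hypotheses $k \le n^{1/8}$ and $m \ge |\supp(\boldsymbol{b})| \ge n^{1/4}$ ensure that both Hal\'asz constraints $30M \le |\supp(\boldsymbol{b})|$ and $80kM \le m$ hold for $n$ large. To convert $\Rka$ (which I control) into the $R_k$ appearing in \cref{thm:halasz-fp}, I invoke \cref{lemma:R_k vs Rka} and divide through by $2^{2k}m^{2k}$ to get
\[
  \frac{R_k(\boldsymbol{b})}{2^{2k} m^{2k}} \le \frac{\Rka(\boldsymbol{b})}{2^{2k}m^{2k}} + \left(\frac{10 k^{1-\alpha}}{m^{1-\alpha}}\right)^k \le \frac{t}{p} + \bigl(10 n^{-(1-\alpha)/8}\bigr)^k,
\]
where the last step uses $k/m \le n^{1/8}/n^{1/4} = n^{-1/8}$. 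Substituting into \cref{thm:halasz-fp} then yields
\[
  \rho_{\F_p}(\boldsymbol{b}) \le \frac{1}{p} + \frac{C}{M^{1/2}}\left(\frac{t}{p} + \bigl(10 n^{-(1-\alpha)/8}\bigr)^k\right) + e^{-M}.
\]

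It remains to verify that each term on the right is at most $O(t/(p n^{1/16}))$. Since $M^{1/2} = \Theta(n^{1/16})$, the main term $Ct/(pM^{1/2})$ is already of the required order. The $1/p$ term is dominated because $t \ge n^{1/4} \ge n^{1/16}$, and $e^{-M} = e^{-n^{1/8}/80}$ is super-polynomially small. The only genuinely subtle term is the contribution $C\bigl(10 n^{-(1-\alpha)/8}\bigr)^k/M^{1/2}$ coming from \cref{lemma:R_k vs Rka}: for $n$ sufficiently large (in terms of $\alpha$) the base is at most $1/2$, and then the hypothesis $p \le 2^{k/100}$ gives $2^{-k} \le p^{-100}$, which is easily absorbed. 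The main obstacle is essentially bookkeeping---ensuring the constraints on $k$, $M$, $p$, and $n$ are simultaneously compatible and that the slack in the bound $p \le 2^{k/100}$ is what makes the correction term from \cref{lemma:R_k vs Rka} harmless; there is no conceptual difficulty beyond this.
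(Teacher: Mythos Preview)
Your proof is correct and follows essentially the same approach as the paper: pass to the witnessing subvector $\boldsymbol{b}$, apply \cref{thm:halasz-fp} together with \cref{lemma:R_k vs Rka}, and check that each resulting term is $O(t/(p\,n^{1/16}))$. The only cosmetic difference is the choice of $M$---the paper takes $M = \lfloor n^{1/4}/(80k) \rfloor$ whereas you take $M = n^{1/8}/80$---but both satisfy the Hal\'asz constraints and give $M^{1/2} = \Theta(n^{1/16})$, so the arguments are interchangeable.
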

\begin{proof}
  Let $\boldsymbol{a}$ be a finite-dimensional vector with $\F_p$-coefficients and suppose that $\boldsymbol{a} \in \boldsymbol{H}_t$ for some $t \ge |\boldsymbol{a}|$. Denote $|\boldsymbol{a}|$, the dimension of $\boldsymbol{a}$, by $n$. Without loss of generality, we may assume that $n$ is larger than any function of $\alpha$, since otherwise our assumptions imply that the claimed upper bound on $\rho_{\F_p}(\boldsymbol{a})$ is greater than one whenever $C = C(\alpha)$ is sufficiently large. Let $\boldsymbol{b}$ be an arbitrary subvector of $\boldsymbol{a}$ such that $|\supp(\boldsymbol{b})| \ge n^{1/4}$ and $\Rka(\boldsymbol{b}) \le t \cdot 2^{2k} \cdot |\boldsymbol{b}|^{2k}/p$. Set $M = \lfloor n^{1/4} / (80k) \rfloor$ so that
  \[
    \max\{30M, 80Mk\} = 80Mk \le n^{1/4} \le |\supp(\boldsymbol{b})| \le |\boldsymbol{b}|
  \]
  and note that our assumptions imply that $M \ge n^{1/4} / (100k) \ge n^{1/8}/100$. \cref{thm:halasz-fp} and \cref{lemma:R_k vs Rka} give
  \begin{align*}
    \rho_{\F_{p}}(\boldsymbol{b}) & \le \frac{1}{p}+\frac{R_{k}(\boldsymbol{b})}{2^{2k} \cdot |\boldsymbol{b}|^{2k} \cdot M^{1/2}}+e^{-M} \\
                                  & \leq \frac{1}{p}+\frac{R_{k}^{\alpha}(\boldsymbol{b})+\left(40k^{1-\alpha} |\boldsymbol{b}|^{1+\alpha}\right)^k}{2^{2k} \cdot |\boldsymbol{b}|^{2k} \cdot M^{1/2}}+e^{-M}\\
                                  & \leq \frac{1}{p}+\frac{t \cdot 2^{2k} \cdot |\boldsymbol{b}|^{2k}/p+\left(40k^{1-\alpha} |\boldsymbol{b}|^{1+\alpha}\right)^k}{2^{2k} \cdot |\boldsymbol{b}|^{2k} \cdot M^{1/2}}+e^{-M}\\
                                  & = \frac{1}{p} \left(1 + \frac{t}{M^{1/2}} +\left( 10 (k / |\boldsymbol{b}|)^{1-\alpha}\right)^k \cdot \frac{p}{M^{1/2}}\right) + e^{-M}.
  \end{align*}
  Since $p \le 2^{k/100} \le e^{n^{1/4}/(100k)} \le e^M$ and
  \[
    \left( 10 (k / |\boldsymbol{b}|)^{1-\alpha}\right)^k \cdot \frac{p}{M^{1/2}} \le \left( 10 \cdot n^{(\alpha-1)/8} \right)^k \cdot p \le  2^{-k} \cdot p \le 1,
  \]
  as $\alpha < 1$ and $n$ is large, we may conclude that
  \[
    \rho_{\F_p}(\boldsymbol{a}) \le \rho_{\F_p}(\boldsymbol{b}) \le \frac{1}{p} \left(3 + \frac{t}{M^{1/2}}\right) \le \frac{4t}{pM^{1/2}} \le \frac{40t}{pn^{1/16}},
  \]
  where the last two inequalities hold as $t \ge n^{1/4} \ge M^{1/2} \ge n^{1/16}/10$.
\end{proof}

\begin{lemma}
  \label{lemma:counting-bad}
  For every integer $n$ and real $t \ge n$,
  \[
    \left|\left\{\boldsymbol{a} \in \F_p^n : |\supp(\boldsymbol{a})| \ge n^{1/4} \text{ and } \boldsymbol{a} \not\in \boldsymbol{H}_t\right\}\right| \le \left(\frac{2p}{\alpha t}\right)^n \cdot t^{n^{1/4}}.
  \]
\end{lemma}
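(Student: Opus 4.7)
The plan is to reduce to the counting theorem (\cref{thm:counting-lemma}) by passing to the support-restriction of each vector and then summing over the possible supports.

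The conceptual obstacle is that $\boldsymbol{H}_t$ quantifies over subvectors $\boldsymbol{b}$ with large \emph{support}, whereas $\Bad_{k,s,\ge t}^\alpha$ in \cref{thm:counting-lemma} quantifies over subvectors with large \emph{length}. To reconcile the two, I would fix any $\boldsymbol{a}$ in the counted set, set $S := \supp(\boldsymbol{a})$ and $m := |S| \ge n^{1/4}$, and let $\boldsymbol{a}_S \in (\F_p \setminus \{0\})^m$ denote the restriction of $\boldsymbol{a}$ to $S$. Because $\boldsymbol{a}_S$ has no zero coordinates, every subvector $\boldsymbol{b}' \subseteq \boldsymbol{a}_S$ satisfies $|\supp(\boldsymbol{b}')| = |\boldsymbol{b}'|$. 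Viewing such a $\boldsymbol{b}'$ as a subvector of $\boldsymbol{a}$ and invoking $\boldsymbol{a} \notin \boldsymbol{H}_t$, one gets $\Rka(\boldsymbol{b}') > t \cdot 2^{2k} |\boldsymbol{b}'|^{2k} / p$ whenever $|\boldsymbol{b}'| \ge n^{1/4}$, and hence $\boldsymbol{a}_S \in \Bad_{k, n^{1/4}, \ge t}^{\alpha}(m)$.

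Before applying the counting theorem, I would dispose of the degenerate regime $t > p$: in that case, for every $\boldsymbol{b}$, one trivially has $\Rka(\boldsymbol{b}) \le 2^{2k}|\boldsymbol{b}|^{2k} < t \cdot 2^{2k}|\boldsymbol{b}|^{2k}/p$, so no $\boldsymbol{a}$ can lie outside $\boldsymbol{H}_t$ and the count is $0$. From now on I may assume $t \le p$, which yields $q := p/(\alpha t) \ge 1/\alpha \ge 1$. Then \cref{thm:counting-lemma}, applied with length parameter $m$ and threshold $n^{1/4}$, bounds for each fixed $S$ of size $m$ the number of $\boldsymbol{a}$ in the counted set with $\supp(\boldsymbol{a}) = S$ by $(n^{1/4}/m)^{2k-1} (\alpha t)^{n^{1/4} - m} p^m$.

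Summing over the $\binom{n}{m}$ choices of $S$ of size $m$, then over $m \in [\lceil n^{1/4} \rceil, n]$, factoring out $(\alpha t)^{n^{1/4}}$, and bounding $(n^{1/4}/m)^{2k-1} \le 1$ and $\alpha \le 1$ yields
\[
  |\{\boldsymbol{a}\}| \le t^{n^{1/4}} \sum_{m \ge n^{1/4}} \binom{n}{m} q^m \le t^{n^{1/4}} (1+q)^n \le t^{n^{1/4}} (2q)^n = \left(\frac{2p}{\alpha t}\right)^n \cdot t^{n^{1/4}},
\]
where the last inequality uses $q \ge 1$. The main (and only nontrivial) step is the support-restriction trick that bridges the two different notions of ``subvector''; once that is in place, the rest is a direct computation.
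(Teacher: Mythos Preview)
Your proof is correct and follows essentially the same approach as the paper: both restrict to the support, observe that the restriction lies in $\Bad_{k,n^{1/4},\ge t}^{\alpha}(|S|)$, apply \cref{thm:counting-lemma}, and sum over supports. Your use of the binomial identity $\sum_m \binom{n}{m} q^m \le (1+q)^n \le (2q)^n$ is a minor variation on the paper's cruder bound (which simply replaces $(p/(\alpha t))^{|S|}$ by $(p/(\alpha t))^n$ and then sums $1$ over all $2^n$ subsets), and your explicit handling of the support-versus-length issue is something the paper leaves implicit.
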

\begin{proof}
  We may assume that $t \le p$, as otherwise the left-hand side above is zero, see the comment below \cref{def:Ht-rrm}. Let us first fix an $S \subseteq [n]$ with $|S| \ge n^{1/4}$ and count only vectors $\boldsymbol{a}$ with $\supp(\boldsymbol{a}) = S$. Since $\boldsymbol{a} \not\in \boldsymbol{H}_t$, the restriction $\boldsymbol{a}|_S$ of $\boldsymbol{a}$ to the set $S$ must be contained in the set $\boldsymbol{B}_{k,n^{1/4},\geq t}(|S|)$. Hence, \cref{thm:counting-lemma} implies that the number of choices for $\boldsymbol{a}|_S$ is at most
  \[
    \left(\frac{n^{1/4}}{|S|}\right)^{2k-1}\left(\frac{p}{\alpha t}\right)^{|S|}(\alpha t)^{n^{1/4}} \leq \left(\frac{p}{\alpha t}\right)^n t^{n^{1/4}},
  \]
  where the second inequality follows as $\alpha t \le t \le p$. Since $\boldsymbol{a}|_S$ completely determines $\boldsymbol{a}$, we obtain the desired conclusion by summing the above bound over all sets $S$.
\end{proof}

\section{Singularity of signed r.r.d.\ matrices}
\label{sec:rrd}
\subsection{Overview of the proof and preliminary reductions}
\label{sec:rrd-overview}

In order to facilitate the use of \cref{thm:counting-lemma}, we aim to bound from above the probability that $M_{n,d}^{\pm}$ is singular over $\F_p$, for a suitably chosen prime $p$. This is clearly sufficient as an integer matrix that is singular (over $\Q$ or any of its extensions) is also singular over $\F_p$, for every prime $p$.

As a first step, let $\mathcal{S}^{c}$ denote the event that some vector $\boldsymbol{v}\in \F_p^{n}\setminus \{\boldsymbol{0}\}$ with small support (i.e., with at most $n^{0.8}$ nonzero coordinates) satisfies $M^{\pm}_{n,d}\boldsymbol{v}=0$. Using an elementary union bound argument, we will show in \cref{lemma:rrd-no-small-supp} that $\Pr(\mathcal{S}^{c})$ is extremely small. Therefore, it will suffice to bound from above the probability that $M^{\pm}_{n,d}$ is singular and $\mathcal{S}$ occurs.

As in \cite{cook2017singularity}, we will find it more convenient to work with the following representation of signed r.r.d.\ matrices. Let $\M_{n,d}$ denote the set of all $\{0,1\}$-valued $n \times n$ matrices whose each row and each column sums to $d$ and let $\S_n$ denote the set of all $\{\pm 1\}$-valued $n \times n$ matrices. Let $M_{n,d}$ denote a uniformly random element of $\M_{n,d}$ and let $\Xi_{n}$ denote a uniformly random element of $\S_n$, chosen independent of $M_{n,d}$. It is readily observed that (under the obvious coupling)
\[
  M^{\pm}_{n,d} = \Xi_n \circ M_{n,d},
\]
where $\circ$ denotes the Hadamard product of two matrices (so that $M^{\pm}_{n,d}(i,j) = \Xi_n(i,j) \cdot M_{n,d}(i,j)$ for all $i,j\in [n]$). An equivalent way of saying this is that the pushforward measure of the uniform measure on $\M_{n,d} \times \S_n$ under the map $\circ : \M_{n,d} \times \S_n \to \M_{n,d}^{\pm}$ coincides with the uniform measure on~$\M_{n,d}^{\pm}$. We will refer to $M_{n,d}$ as the \emph{base} of the signed r.r.d.\ matrix $\M_{n,d}^{\pm}$. Observe that $M_{n,d}$ can be viewed as the (bi)adjacency matrix of a uniformly random $d$-regular bipartite graph with $n+n$ vertices.

Similarly as in~\cite{cook2017singularity}, we will first condition on a `good' realization of the base matrix $M_{n,d}$ and later use only the randomness of $\Xi_n$. Of course, we will need to show that such `good' realizations of the base matrix occur with high probability. More precisely, we will identify a subset $\EE_{n,d} \subseteq \M_{n,d}$ of base matrices with suitable `expansion' properties and use the following elementary chain of inequalities:
\begin{align*}
  \Pr\left(M^{\pm}_{n,d}\text{ is singular}\cap \mathcal{S}\right)
  &= \Pr\left(\Xi_n \circ M_{n,d} \text{ is singular}\cap \mathcal{S}\right)\\
  &\leq \Pr\big((\Xi_n \circ M_{n,d}\text { is singular}\cap \mathcal{S})\cap (M_{n,d} \in \EE_{n,d}) \big) + \Pr\left(M_{n,d}\not\in \EE_{n,d}\right)\\
  &\leq \sup_{M \in \EE_{n,d}}\Pr\left(\Xi_{n}\circ M\text{ is singular}\cap \mathcal{S}\right) + \Pr\left(M_{n,d}\not\in \EE_{n,d}\right).
\end{align*}
Roughly speaking, the expansion property that makes the adjacency matrix of a bipartite $d$-regular graph $B$ belong to $\EE_{n,d}$ is the following. Denoting the bipartition of $B$ by $V_1 \cup V_2$, we require that for every moderately large (of size at least $n^{0.6}$) subset $S \subseteq V_1$, all but very few (at most $n^{0.6}$) vertices of $V_2$ have at least $d|S|/(2n)$ many neighbors in $S$. As it turns out, this is a fairly weak property in the sense that (with relatively little work) we will be able to give a very strong upper bound on the probability that $M_{n,d} \not\in \EE_{n,d}$; this is done in \cref{lemma:rrd-expanding-base-whp}. We note that the proof in \cite{cook2017singularity} also proceeds in a simliar fashion. However, the expansion properties required there are much stronger than what we require. Therefore, bounding the respective probability (of not having such expansion) requires considerably more work. In fact, for this reason the proof in \cite{cook2017singularity} is not self-contained; it relies on a previous work of the author on random regular graphs~\cite{cook2017discrepancy}.

The main part of our argument is bounding the supremum above. Fix an arbitrary $M \in \EE_{n,d}$, let $M_1,\dotsc, M_n$ denote its rows, let $M_1', \dotsc, M_n'$ denote the (random) rows of $\Xi_n \circ M$, and let 
$$S_i':= \Span\{M_1',\dotsc,M_{i-1}',M_{i+1}',\dotsc,M_n'\}.$$ 
Observe that $\Xi_n \circ M$ is singular if and only if $M_i' \in S_i'$ for some $i \in [n]$. Furthermore, $M_i' \in S_i'$ if and only if $M_i'$ is orthogonal to every vector in the orthogonal complement of $S_i'$ in $\F_p^n$.  Denote by $\boldsymbol{V}$ the set of all vectors in $\F_p^{n}$ whose support is not small (i.e., vectors with more than $n^{0.8}$ nonzero coordinates). The above observations and the definition of $\mathcal{S}$ yield
\begin{align*}
  \Pr\left(\Xi_{n}\circ M \text{ is singular}\cap \mathcal{S}\right)
&\leq \sum_{i=1}^{n}\Pr\left(M_i' \perp \boldsymbol{v} \text{ for all } \boldsymbol{v} \in (S_i')^{\perp}\cap \boldsymbol{V}\right)\\
&= \sum_{i=1}^{n}\E\left[\Pr\left(M_i' \perp \boldsymbol{v} \text{ for all } \boldsymbol{v} \in (S_i')^{\perp}\cap \boldsymbol{V} \mid S_i'\right)\right]\\
&\leq \sum_{i=1}^{n}\E\left[\inf_{\boldsymbol{v}\in (S_i')^{\perp}\cap\boldsymbol{V}}\rho_{\F_p}\left(\boldsymbol{v}\circ {M_i}\right)\right]\\
&\leq n \cdot \max_{i \in [n]} \left(\E\left[\inf_{\boldsymbol{v}\in (S_i')^{\perp}\cap\boldsymbol{V}}\rho_{\F_p}\left(\boldsymbol{v}\circ{M_i}\right)\right]\right).
\end{align*}
Let $i_0 \in [n]$ be an index that attains the maximum in the above expression. For every $\rho > 0$, let $\mathcal{B}_{\rho}$ denote the event that there exists a vector $\boldsymbol{v}\in (S_{i_0}')^{\perp}\cap \boldsymbol{V}$ such that $\rho_{\F_p}(\boldsymbol{v}\circ M_{i_0}) \geq \rho$.\footnote{Even though the event $\mathcal{B}_\rho$ is that \emph{supremum}, rather than the \emph{infimum}, of $\rho_{\F_p}(\boldsymbol{v} \circ M_{i_0})$ is at least $\rho$, in the case of interest, $S_{i_0}'$ has dimension $n-1$ and thus there is only one (up to a scalar multiple) vector $\boldsymbol{v}$ in $(S_{i_0}')^\perp$.} We may conclude that
\[
  \Pr\left(\Xi_{n}\circ M \text{ is singular}\cap\mathcal{S}\right) \leq n \cdot \inf_{\rho > 0} \big(\Pr(\mathcal{B}_{\rho}) + \rho \big).
\]
It remains to bound from above the probability of $\mathcal{B}_\rho$. By the union bound,
\begin{equation}
  \label{eq:B-rho-union-bound}
  \begin{split}
    \Pr(\mathcal{B}_{\rho}) 
    & \leq \sum_{\substack{\boldsymbol{v}\in\boldsymbol{V} \\ \rho_{\F_p}(\boldsymbol{v}\circ M_{i_0})\geq \rho}}\Pr\big(M_i' \cdot \boldsymbol{v} = 0\text{ for all } i \in [n]\setminus \{i_0\}\big)\\
    & \leq  \sum_{\substack{\boldsymbol{v}\in \boldsymbol{V} \\ \rho_{\F_p}(\boldsymbol{v}\circ M_{i_0})\geq \rho}}\prod_{i\in [n]\setminus \{i_0\}}\rho_{\F_p}(\boldsymbol{v}\circ M_i).
  \end{split}
\end{equation}
We will bound the sum on the right-hand side above in two stages. First, we will give an upper bound on
\[
  \sum_{\boldsymbol{v}\in \boldsymbol{V_1}}\prod_{i\in [n]\setminus \{i_0\}}\rho_{\F_p}(\boldsymbol{v}\circ M_i),
\]
where $\boldsymbol{V_1}\subseteq \boldsymbol{V}$ denotes the set of vectors $\boldsymbol{v}\in \boldsymbol{V}$ for which `many' (at least $n^{0.7}$) of the atom probabilities $\rho_{\F_p}(\boldsymbol{v}\circ M_i)$ are `large'. For this, we stratify the set $\boldsymbol{V_1}$, essentially according to the size of $\prod_{i\in [n]\setminus \{i_0\}}\rho_{\F_p}(\boldsymbol{v}\circ M_i)$, and use the corollary of our counting theorem (\cref{lemma:counting-bad}) along with the expansion property of the base matrix to control the number of vectors in each stratum (\cref{lemma:rrd-not-bad-wrt-many}). Therefore, it only remains to bound from above
\[
  \sum_{\substack{\boldsymbol{v}\in \boldsymbol{V} \setminus \boldsymbol{V_1} \\ \rho_{\F_p}(\boldsymbol{v}\circ M_{i_0})\geq \rho}}\prod_{i\in [n]\setminus \{i_0\}}\rho_{\F_p}(\boldsymbol{v}\circ M_i).
\]
We will do this by first using the counting theorem to show that the size of the set $\{\boldsymbol{v}\in \boldsymbol{V}: \rho_{\F_p}(\boldsymbol{v}\circ M_{i_0})\geq \rho\}$ is `small' (\cref{lemma:rrd-number-bad-wrt-1}) and then bounding the product $\prod_{i\in [n]\setminus \{i_0\}}\rho_{\F_p}(\boldsymbol{v}\circ M_i)$ using the fact that $\boldsymbol{v}\notin \boldsymbol{V_1}$ (\cref{prop:rrd-bounding-Brho}).

We present complete details below. As stated earlier, we make no attempt to optimize our bound on the singularity probability. Consequently, we choose various parameters conveniently (but otherwise somewhat arbitrarily) in order to simplify the exposition. Throughout, $r \in (0,1]$ is fixed, $n$ is a sufficiently large integer, $d = \lceil rn \rceil$, and  various implicit constants are allowed to depend on $r$. Moreover, we set $\alpha = 1/2$, $k = n^{1/8}$, and let $p$ be an arbitrary prime satisfying $2^{n^{0.1}}/2 \le p \le 2^{n^{0.1}}$. Note that this choice of parameters makes \cref{lem:smallballdreg} applicable to vectors $\boldsymbol{a} \in \F_p^n \cap \boldsymbol{H}_t$ as long as $t \ge n^{1/4}$.

\subsection{Eliminating potential null vectors with small support}

We will now show that it is very unlikely that a nonzero vector in $\F_p^n$ with small support is a null vector of $M^{\pm}_{n,d}$. This will be useful in order to apply Hal\'asz's inequality (\cref{thm:halasz-fp}) effectively later. In fact, we show an even stronger statement -- the matrix $M_{n,d}^{\pm}$ is very unlikely to have a null vector with small support even after we condition on the base matrix $M_{n,d}$.

\begin{proposition} 
  \label{lemma:rrd-no-small-supp}
  For every $M \in \mathcal{M}_{n,d}$,
  \[
    \Pr\left(\exists \boldsymbol{v}\in \F_p^{n}\setminus\{\boldsymbol{0}\}\text{ such that }(\Xi_{n}\circ M)\boldsymbol{v}=\boldsymbol{0}\text{ and }|\supp(\boldsymbol{v})|\leq n^{0.8} \right) \lesssim 2^{-d/2}.
  \]
\end{proposition}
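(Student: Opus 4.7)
The plan is to prove the bound by a direct union bound over all nonzero vectors $\boldsymbol{v} \in \F_p^n$ with $|\supp(\boldsymbol{v})| \le n^{0.8}$. The key point is that, for any such $\boldsymbol{v}$, the $d$-regularity of the base matrix $M$ forces the event $(\Xi_n \circ M)\boldsymbol{v} = \boldsymbol{0}$ to impose at least $d$ independent $1/2$-constraints coming from the rows of $\Xi_n$, yielding a per-vector probability of at most $2^{-d}$. This decays fast enough to beat the crude count of the relevant $\boldsymbol{v}$'s.

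For the per-vector bound, I would fix $\boldsymbol{v}$ with support $S := \supp(\boldsymbol{v})$ and write
\[
  \Pr\big((\Xi_n\circ M)\boldsymbol{v}=\boldsymbol{0}\big) = \prod_{i=1}^{n} \Pr\Big(\textstyle\sum_{j\in N_i(S)}\Xi_n(i,j)v_j = 0\Big),
\]
where $N_i(S):=\{j\in S : M(i,j)=1\}$ and independence comes from the fact that the entries of $\Xi_n$ are i.i.d. For each $i$ with $N_i(S)\ne\emptyset$, fix any $j_0 \in N_i(S)$ and condition on $\Xi_n(i,j)$ for all $j\in N_i(S)\setminus\{j_0\}$; since $v_{j_0}\ne 0$ (as $j_0\in\supp\boldsymbol{v}$), the remaining condition reads $\Xi_n(i,j_0) = c$ for a fixed $c \in \F_p$, which has probability at most $1/2$. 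Now, since every column of $M \in \M_{n,d}$ has exactly $d$ ones, for any $j \in S$ there are exactly $d$ rows $i$ with $M(i,j) = 1$; in particular, $|\{i : N_i(S) \ne \emptyset\}| \ge d$, so $\Pr\big((\Xi_n\circ M)\boldsymbol{v}=\boldsymbol{0}\big) \le 2^{-d}$.

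To finish, I would apply the union bound. The number of nonzero $\boldsymbol{v} \in \F_p^n$ with $|\supp(\boldsymbol{v})|\le n^{0.8}$ is at most
\[
  \sum_{s=1}^{\lfloor n^{0.8}\rfloor}\binom{n}{s}(p-1)^s \le (np)^{n^{0.8}+1},
\]
which, using the standing choice $p \le 2^{n^{0.1}}$, is bounded by $2^{O(n^{0.9})}$. Combining with the per-vector bound gives total probability at most $2^{O(n^{0.9})-d}$, and since $d = \lceil rn\rceil = \Theta(n)$ dominates $n^{0.9}$, this is at most $2^{-d/2}$ for all sufficiently large $n$, as desired.

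I do not anticipate any serious obstacle: the only point requiring a moment's care is the case $|N_i(S)|=1$, but there the probability is in fact $0$ (again because $v_{j_0}\ne 0$), so the $\le 1/2$ bound holds trivially. Everything else is a standard counting calculation.
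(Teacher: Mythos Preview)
Your proof is correct and follows essentially the same approach as the paper: obtain a per-vector bound of $2^{-d}$ by exploiting the $d$-regularity of the columns of $M$ (the paper fixes a single $j\in\supp(\boldsymbol{v})$ and uses the $d$ rows hitting that column, which is exactly your observation that $|\{i:N_i(S)\neq\emptyset\}|\ge d$), then beat this against the crude count $(np)^{n^{0.8}}=2^{O(n^{0.9})}$ of small-support vectors via a union bound.
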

\begin{proof}
  Fix any nonzero vector $\boldsymbol{v} = (v_1, \dotsc, v_n) \in \F_p^{n}$ and let $j$ be an arbitrary index such that $v_j \neq 0$. Let $i_1,\dotsc, i_d \in [n]$ be distinct indices such that $M(i_k,j) =1$ for all $k\in [d]$; in other words, $i_1,\dotsc,i_d$ are the indices of the $d$ rows of $M$ which have nonzero entries in the $j$th column. We claim that for every $k \in [d]$,
  \[
    \Pr(M_{i_k}' \cdot \boldsymbol{v} = 0) \le 1/2.
  \]
  To see this, simply condition on all the coordinates of $M_{i_k}'$ except for the $j$th, which is equally likely to be $1$ or $-1$. Since $v_j \neq 0$, then at most one of these two outcomes makes $M_{i_k}' \cdot \boldsymbol{v}$ zero. The rows of $\Xi_n$ are independent and thus the probability that $\boldsymbol{v}$ is orthogonal to all of $M_{i_1}',\dotsc,M_{i_d}'$ is at most~$2^{-d}$. Finally, note that the number $N$ of vectors with support of size at most $n^{0.8}$ satisfies
  \[
    N \le \binom{n}{n^{0.8}} \cdot p^{n^{0.8}} \leq n^{n^{0.8}} \cdot p^{n^{0.8}} \leq \left(n \cdot 2^{n^{0.1}}\right)^{n^{0.8}} \lesssim 2^{d/2}
  \]
  and hence the union bound over all such vectors yields the desired conclusion.
\end{proof}

\subsection{Expanding base matrices}

We now formally define the expansion property mentioned in the overview. Let $\EE_{n,d}$ be the set of all matrices $M \in \M_{n,d}$ satisfying the following property for every subset $S\subseteq [n]$ with $|S|\geq n^{0.6}$ (recall that $M_1, \dotsc, M_n$ are the rows of $M$):
\[
  \left|\left\{i \in [n]: \left|\supp(M_i) \cap S \right|\leq \frac{r|S|}{2}\right\}\right| \leq n^{0.6}.
\]
We shall show that it is very unlikely that a uniformly random element of $\M_{n,d}$ is not in $\EE_{n,d}$.  

\begin{proposition} 
  \label{lemma:rrd-expanding-base-whp}
  Let $M_{n,d}$ denote a uniformly random element of $\M_{n,d}$. Then,
  \[
    \Pr\left(M_{n,d} \not\in \EE_{n,d}\right) \lesssim \exp\left(-\frac{rn^{1.2}}{10}\right).
  \]
\end{proposition}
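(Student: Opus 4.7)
The plan is to reduce to a double union bound over pairs $(S,T)$ with $S,T\subseteq[n]$, $|S|=s\geq n^{0.6}$, and $|T|=t:=\lceil n^{0.6}\rceil+1$: whenever $M_{n,d}\notin \EE_{n,d}$ some such pair must exhibit the event
\[
  \mathcal{B}_{S,T} := \{|\supp(M_i)\cap S| \leq rs/2 \text{ for every } i \in T\}.
\]
Once we establish a tail bound of the form $\Pr(\mathcal{B}_{S,T})\leq \exp(-crst)$ for some constant $c=c(r)>0$, the conclusion
\[
  \Pr(M_{n,d}\notin\EE_{n,d}) \leq \sum_{s\geq n^{0.6}}\binom{n}{s}\binom{n}{t}\exp(-crst) \lesssim \exp(-rn^{1.2}/10)
\]
will drop out, because for every admissible $s$ the exponent $crst\geq crn^{0.6}\cdot s$ outweighs $\log\bigl(\binom{n}{s}\binom{n}{t}\bigr)=O(s\log n + n^{0.6}\log n)$ by a polynomial factor.

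The starting observation for the probability estimate is that on $\mathcal{B}_{S,T}$ the total count $f(M):=\sum_{i\in T,\,j\in S}M(i,j)$ is at most $rst/2$. Since the uniform measure on $\M_{n,d}$ is invariant under permutations of columns, every row $M_i$ is marginally uniform over the $d$-subsets of $[n]$; consequently $\E[M(i,j)]=d/n$ for every entry, so $\E[f(M)] = std/n \geq rst$. Thus $\mathcal{B}_{S,T}$ is contained in the event that $f(M)$ lies below half its mean, and it suffices to prove a Chernoff-type bound of the shape
\[
  \Pr(f(M)\leq \E[f(M)]/2) \leq \exp(-\E[f(M)]/8) \leq \exp(-rst/8).
\]

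To secure such a bound I would follow one of two essentially equivalent routes. The first invokes the known fact that the entries of a uniformly random $\{0,1\}$-matrix with prescribed row and column sums are negatively associated; the NA property transfers immediately to the linear statistic $f(M)$ and yields the one-sided Chernoff inequality above. The second, more self-contained, route is a comparison with the product measure $\mu_{\mathrm{indep}}$ in which each row is an independent uniform $d$-subset of $[n]$: conditioning $\mu_{\mathrm{indep}}$ on the column sums being $(d,\dots,d)$ recovers the uniform measure on $\M_{n,d}$, so
\[
  \Pr_{\mathrm{unif}}(\mathcal{B}_{S,T}) \leq \frac{\Pr_{\mu_{\mathrm{indep}}}(\mathcal{B}_{S,T})}{\Pr_{\mu_{\mathrm{indep}}}(\text{all column sums equal }d)}.
\]
The numerator factorises over the rows in $T$ and each factor is at most $\exp(-\Omega(rs))$ by the standard hypergeometric tail bound, yielding $\exp(-\Omega(rst))$; the denominator is at least $\exp(-O(n\log n))$ by a local CLT estimate for sums of independent indicator vectors, and this multiplicative cost is absorbed because $rst\geq rn^{1.2}$ dwarfs $n\log n$.

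The main obstacle is obtaining the sharp Chernoff-type tail on $f(M)$ with rate proportional to its mean. Generic martingale methods for uniformly random $d$-regular bipartite graphs (for instance, Azuma--Hoeffding applied to a swap-chain exposure) only deliver sub-Gaussian tails of order $\exp(-c(rst)^{2}/(nd))$, which become vacuous as soon as $rst$ is comparable to $\sqrt{nd}$. It is precisely the combinatorial structure of the contingency-table distribution -- accessed either through negative association or through the product-measure comparison together with the local CLT -- that upgrades these sub-Gaussian tails to the Chernoff-type tails needed to close the union bound and produce the stretched-exponential estimate in the statement.
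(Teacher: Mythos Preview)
Your route (b) is essentially the paper's proof. The paper compares the uniform measure on $\M_{n,d}$ with a product measure, divides by the probability of landing in $\M_{n,d}$, bounds the numerator by a Chernoff-type tail per row together with a union bound over the choice of bad rows, and absorbs the $\exp(O(n\log n))$ cost of the denominator using the $n^{1.2}$ in the exponent. The only cosmetic differences are: (i) the paper's reference measure makes each \emph{entry} i.i.d.\ $\mathrm{Ber}(d/n)$ rather than each \emph{row} an i.i.d.\ uniform $d$-subset, so the per-row tail is binomial rather than hypergeometric; (ii) for the denominator the paper simply cites the Canfield--McKay enumeration of $d$-regular bipartite graphs instead of invoking a local CLT; and (iii) the paper works directly with the per-row event (union over the set of $n^{0.6}$ bad rows) rather than first passing to the aggregate count $f(M)$---you in fact do the same when you say the numerator ``factorises over the rows in $T$''.

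A word of caution on route (a): negative association of the entries of a uniformly random element of $\M_{n,d}$ is not, to my knowledge, a standard off-the-shelf fact. The $d=1$ case (random permutation matrix) is classical, and NA is known for a single row (hypergeometric sampling), but for the full doubly-constrained $\{0,1\}$ contingency table with $d\ge 2$ this would need a reference or a proof, and I do not believe it follows from Joag-Dev--Proschan or the strongly Rayleigh machinery without further work. Since route (b) already closes the argument, this is not a gap in your proposal, but if you want to actually pursue route (a) you should either locate a precise citation or fall back on (b).
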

\begin{proof}  
  Let $\widetilde{M}_{n,d}$ denote a random $n \times n$ matrix whose entries are i.i.d.\ $\mathrm{Ber}(d/n)$ random variables and denote its rows by $\widetilde{M}_1, \dotsc, \widetilde{M}_n$. Since each matrix in $\M_{n,d}$ has the same number $nd$ of nonzero entries, then
  \[
    \Pr\left(M_{n,d} \not\in \EE_{n,d}\right) = \Pr\left(\widetilde{M}_{n,d} \not\in \EE_{n,d} \mid \widetilde{M}_{n,d} \in \M_{n,d}\right)
    \leq \frac{\Pr\big(\widetilde{M}_{n,d} \not\in \EE_{n,d}\big)}{\Pr\big(\widetilde{M}_{n,d} \in \M_{n,d}\big)}
  \]
  %\WS{I think that we should include a derivation of this. If I am not mistaken, then it essentially boils down to showing that $\binom{n}{d}^n \ge e^{-\Omega(n \log \min\{d, n-d\})} \binom{n^2}{nd}$?}
  It was proved\footnote{For the range of parameters we are interested in, such a bound may also be obtained directly using the known lower bound on the number of perfect matchings in a regular bipartite graph, following from the resolution of Van der Waerden's conjecture (see, e.g. \cite{egorychev1981solution}).} by Canfield and McKay \cite[Theorem~1]{canfield2005asymptotic} that %\WS{The $\Omega$ should be an $O$ for this to work. Is it still true?}
  \[
    \Pr\left(\widetilde{M}_{n,d} \in \M_{n,d}\right) = \exp\left(-O\big(n \log(\min\{d, n-d\})\big)\right),
  \]
  provided that $\min\{d,n-d\} = \omega\left(n/\log{n}\right)$, so it suffices to bound $\Pr(\widetilde{M}_{n,d} \not\in \EE_{n,d})$ from above. For this, fix any $S \subseteq [n]$ with $|S| = s \geq n^{0.6}$. Since for any $i \in [n]$, the cardinality of $\supp(\widetilde{M}_i) \cap S$ has binomial distribution with mean $ds/n \ge rs$, it follows from standard tail estimates for binomial distributions that 
  \[
    \Pr\left(|\supp(\widetilde{M}_i) \cap S|\leq \frac{rs}{2}\right) \leq \exp\left(-\frac{rs}{8}\right).
  \]
  Since the rows of $\widetilde{M}_{n,d}$ are independent, the probability that there are at least $n^{0.6}$ such indices $i\in [n]$ is at most 
  \[
    \binom{n}{n^{0.6}} \cdot \exp\left(-\frac{rs}{8}\right)^{n^{0.6}} \lesssim \exp\left(n^{0.6}\log n-\frac{rn^{1.2}}{8}\right) \lesssim \exp\left(-\frac{rn^{1.2}}{9}\right).
  \]
  Taking the union bound over all sets $S\subseteq[n]$ with $|S|\geq n^{0.6}$ gives the desired conclusion. 
\end{proof}

\subsection{Bounding $\Pr(\mathcal{B}_\rho)$ for small $\rho$}

Throughout this subsection, we will consider a fixed $M \in \EE_{n,d}$ and denote its rows by $M_1, \dotsc, M_n$. Recall from the proof outline that
\[
  \boldsymbol{V} = \big\{\boldsymbol{v} \in \F_p^n : |\supp(\boldsymbol{v})| > n^{0.8}\big\}
\]
and that $i_0$ is an index that attains the maximum in
\[
  \max_{i \in [n]} \left(\E\left[\inf_{\boldsymbol{v}\in (S_i')^{\perp}\cap\boldsymbol{V}}\rho_{\F_p}\left(\boldsymbol{v}\circ{M_i}\right)\right]\right).
\]
Note that for any $\boldsymbol{v} \in \boldsymbol{V}$, the definition of expanding base matrices yields a subset $T_{\boldsymbol{v}}\subseteq[n]\setminus \{i_0\}$ with $|T_{\boldsymbol{v}}| \geq n-n^{0.6}-1$ such that for each $i \in T_{\boldsymbol{v}}$,
\[
  |\supp(\boldsymbol{v} \circ M_i)| = |\supp(\boldsymbol{v}) \cap \supp(M_i)| \geq \frac{r|\supp(\boldsymbol{v})|}{2} \ge \frac{rn^{0.8}}{2}.
\]
Recall the definitions of the set $\boldsymbol{H}_t$ of $t$-good vectors and of the goodness function $h$ given in \cref{sec:good-bad-vectors}.

\begin{definition}
  \label{def:Btell-rrd}
  For any $t>0$ and $\ell \in \N$, define the set $\boldsymbol{B}_{t,\ell}$ of \emph{$(t,\ell)$-bad} vectors by
  \[
    \boldsymbol{B}_{t,\ell}:= \big\{\boldsymbol{v}\in \boldsymbol{V} : |\{i \in T_{\boldsymbol{v}} : \boldsymbol{v} \circ M_i \notin \boldsymbol{H}_{t}\}| \ge \ell \big\}.
  \]
  We say that a sequence $(i_1, \dotsc, i_\ell)$ of distinct elements of of $T_{\boldsymbol{v}}$ \emph{witnesses} $\boldsymbol{v} \in \boldsymbol{B}_{t, \ell}$ if
  \[
    h(\boldsymbol{v} \circ M_{i_1}) \geq \dotsb \geq h(\boldsymbol{v} \circ M_{i_\ell}) \geq \max_{i \in T_{\boldsymbol{v}} \setminus \{i_0, i_1, \dotsc, i_\ell\}} h(\boldsymbol{v} \circ M_i).
  \]
\end{definition}

Recall from the proof outline that our goal is to bound from above the probability of the event $\mathcal{B}_\rho$ for some very small $\rho$ and that we are planning to do it by splitting the sum in the right-hand side of~\cref{eq:B-rho-union-bound} into two parts, depending on whether or not the vector $\boldsymbol{v} \in \boldsymbol{V}$ admits `many' indices $i \in [n]$ for which the largest atom probability of $\boldsymbol{v} \circ M_i$ is `large'. More precisely, we shall let
\[
  \rho = p^{-1/2}, \qquad \ell = n^{0.7}, \qquad \text{and} \qquad \boldsymbol{V_1} = \boldsymbol{B}_{n, \ell}.
\]
In other words, we first consider those vectors $\boldsymbol{v} \in \boldsymbol{V}$ for which there are at least $n^{0.7}$ indices $i\in [n]$ such that $\boldsymbol{v} \circ M_i$ has large support but nevertheless \cref{lem:smallballdreg} does not give a strong upper bound on $\rho_{\F_p}(\boldsymbol{v} \circ M_i)$.

\begin{lemma} 
\label{lemma:rrd-not-bad-wrt-many}
If $t \ge n$, then
\[
  \sum_{\boldsymbol{v}\in \boldsymbol{B}_{t, \ell}}\prod_{i\in[n]\setminus \{i_0\}}\rho_{\F_p}(\boldsymbol{v} \circ M_i) \lesssim n^{-n/20}.
\]
\end{lemma}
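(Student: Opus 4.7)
The plan is to stratify $\boldsymbol{B}_{t,\ell}$ based on the size of the product $\prod_i \rho_{\F_p}(\boldsymbol{v} \circ M_i)$ (which is essentially controlled by the $\ell$-th largest value of $h(\boldsymbol{v} \circ M_i)$ over $i \in T_{\boldsymbol{v}}$), and to bound each stratum by combining \cref{lemma:counting-bad} with the expansion property of $M \in \EE_{n,d}$. Concretely, for each dyadic $T$ with $t \le T \le p$, let $\boldsymbol{B}^T_{t,\ell}$ denote the subset of $\boldsymbol{v} \in \boldsymbol{B}_{t,\ell}$ whose $\ell$-th largest $h(\boldsymbol{v} \circ M_i)$ lies in $[T,2T)$. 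Since $h \le p$, there are only $O(\log p) = O(n^{0.1})$ strata, so it suffices to prove a bound $\lesssim n^{-n/19}$ per stratum.

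For the product bound on $\boldsymbol{B}^T_{t,\ell}$, at most $\ell - 1$ indices $i \in T_{\boldsymbol{v}}$ can have $h(\boldsymbol{v} \circ M_i) \ge 2T$; hence at least $|T_{\boldsymbol{v}}| - \ell + 1 \ge n - n^{0.6} - \ell$ indices satisfy $h(\boldsymbol{v} \circ M_i) \le 2T$. Since $T \ge t \ge n \ge n^{1/4}$ and the remaining hypotheses of \cref{lem:smallballdreg} are satisfied, each such index yields $\rho_{\F_p}(\boldsymbol{v} \circ M_i) \le 4CT/(pn^{1/16})$; bounding the remaining $\le n^{0.6} + \ell$ atom probabilities trivially by $1$ gives
\[
\prod_{i \in [n] \setminus \{i_0\}} \rho_{\F_p}(\boldsymbol{v} \circ M_i) \le \left(\frac{4CT}{pn^{1/16}}\right)^{n - n^{0.6} - \ell}.
\]

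For the count bound on $\boldsymbol{B}^T_{t,\ell}$, for any such $\boldsymbol{v}$ there exist at least $\ell \ge n^{0.6}$ indices $i \in T_{\boldsymbol{v}}$ with $\boldsymbol{v} \circ M_i \notin \boldsymbol{H}_T$; fix such a set $I$. The expansion property of $M$ (into which we also include, without loss of generality, the row-to-column version, at the cost of only a constant factor in \cref{lemma:rrd-expanding-base-whp}) guarantees that $\bigcup_{i \in I} \supp(M_i)$ covers all but at most $n^{0.6}$ coordinates of $[n]$, so $\boldsymbol{v}$ has at most $p^{n^{0.6}}$ freedom outside this union. On the union, I would apply \cref{thm:counting-lemma} iteratively to the restrictions $\boldsymbol{v}|_{\supp(M_i)} \in \F_p^d$, using that each such restriction must lie in $\Bad^{\alpha}_{k, n^{1/4}, \ge T}(d)$, whose size is at most $(n^{1/4}/d)^{2k-1} (\alpha T)^{n^{1/4} - d} p^d$. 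The crucial combinatorial saving is the factor $(n^{1/4}/d)^{2k-1} = n^{-\Omega(n^{1/8})}$, which is harvested at every iterative step.

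Combining the count and product bounds, the only large positive factor is the $p^n$-like contribution from the freedom of $\boldsymbol{v}$; the powers of $p$ cancel up to a $p^{O(n^{0.7})}$ remainder from the indices not covered by the dominant product exponent; the powers of $T$ leave behind a factor that is absorbed by the count's $(\alpha T)^{n^{1/4} - d}$; and the product's $n^{-n/16}$ factor, combined with $(n^{1/4}/d)^{2k-1}$, dominates the remaining contributions. After summing over the $O(n^{0.1})$ strata, one obtains the desired bound $\lesssim n^{-n/20}$. The main obstacle is taming the naive $p^{n-d}$ factor (and the corresponding $p^{(1-r)n}$) from the freedom of $\boldsymbol{v}$ outside a single constrained support: the resolution is precisely the joint use of many witness indices together with the row-to-column expansion, which reduces this uncontrolled freedom from $p^{n-d}$ down to $p^{n^{0.6}}$, at which point the combinatorial savings of \cref{thm:counting-lemma} comfortably deliver the final estimate.
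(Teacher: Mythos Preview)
Your overall strategy---dyadic stratification by the $\ell$th largest value of $h(\boldsymbol{v}\circ M_i)$, together with the product bound via \cref{lem:smallballdreg}---matches the paper's. The gap is in your count bound. You propose to apply \cref{thm:counting-lemma} ``iteratively'' to the restrictions $\boldsymbol{v}|_{\supp(M_i)}$ for several witness indices $i$ and then multiply the resulting bounds. But the supports $\supp(M_i)$ overlap heavily (each has size $d=\Theta(n)$), so this multiplication massively overcounts: with $b$ iterations the count contributes a factor $p^{bd}$, whereas the product bound supplies only $p^{-n+o(n)}$. To cover $[n]$ up to $n^{0.6}$ coordinates you need $b\ge\Theta((1/r)\log n)$, so $bd-n=\Theta(n\log n)$, and at $T=n$ the net factor $(p/T)^{bd-n}$ is of order $\exp\!\big(\Theta(n^{1.1}\log n)\big)$. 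Your ``crucial combinatorial saving'' $(n^{1/4}/d)^{2k-1}=n^{-\Theta(n^{1/8})}$ per step is nowhere near enough to absorb this; in fact the paper simply discards that factor as $\le 1$.

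The paper repairs precisely this overcounting by a greedy peeling that uses only the expansion property already built into $\EE_{n,d}$ (no additional ``row-to-column'' variant is needed). From the $\ell$ witnessing rows one extracts indices $j_1,\dotsc,j_b$ with $b=O((1/r)\log n)$ and \emph{pairwise disjoint} sets $J_a\subseteq \supp(M_{j_a})\cap\supp(\boldsymbol{v})$ that together cover all but $n^{0.75}$ of $\supp(\boldsymbol{v})$. Since $h(\boldsymbol{v}|_{J_a})\ge h(\boldsymbol{v}\circ M_{j_a})>2^m t$, one applies \cref{lemma:counting-bad} to each $\boldsymbol{v}|_{J_a}$ separately; disjointness makes the product of these counts a genuine bound on the number of $\boldsymbol{v}$, and $\sum_a|J_a|\le |\supp(\boldsymbol{v})|\le n$ ensures the $p$-powers cancel against those in the product bound, leaving the $n^{-n/16}$ from \cref{lem:smallballdreg} as the dominant term. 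Your iterative scheme has no disjointness mechanism, and without one the argument does not close.
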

\begin{proof}
  It is enough to show that the contribution to the above sum of $\boldsymbol{v} \in \boldsymbol{B}_{t, \ell}$ that are witnessed by a given sequence $i_1, \dotsc, i_\ell$ of distinct indices in $[n] \setminus \{i_0\}$ and that satisfy $\supp(\boldsymbol{v}) = S$ for a given set $S$ with $|S| > n^{0.8}$ is $O(2^{-n} \cdot n^{-n/20-\ell})$. Indeed, we can then take the union bound over all such sequences and all such sets $S$. Let us then fix such a sequence and a set $S$ for the remainder of the proof.

  We first claim that there are distinct indices $j_1, \dotsc, j_b \in \{i_1, \dotsc, i_\ell\}$ and pairwise disjoint subsets $J_1 \subseteq S \cap \supp(M_{j_1}), \dotsc, J_b \subseteq S \cap \supp(M_{j_b})$ such that $b \leq (2/r) \cdot \log n$ and
  \begin{itemize}
  \item
    $|J_a| \ge n^{0.7}$ for every $a \in [m]$, and
  \item
    $|J_1| + \dotsb + |J_b| = |J_1 \cup \dotsb \cup J_b| \ge |S| - n^{0.75}$.
  \end{itemize}
  Indeed, one may construct these two sequences as follows. Let $I_0 = S$ and for $a = 0, 1, 2, \dotsc$, do the following. If $|I_a| > n^{0.75} $, then the assumption that $M \in \EE_{n,d}$ implies that for all but at most $n^{0.6}$ indices $i \in [n]$, we have $|\supp(M_i) \cap I_a| \ge r|I_a|/2 \ge n^{0.7}$. Since $\ell - a \ge n^{0.7} - (2/r) \cdot \log n > n^{0.6}$, then we can find one such index among $\{i_1, \dotsc, i_\ell\} \setminus \{j_1, \dotsc, j_a\}$; denote this index by $j_{a+1}$, let $J_{a+1} = \supp(M_{j_{a+1}}) \cap I_a$, and let $I_{a+1} = I_a \setminus J_{a+1}$. Otherwise, if $|I_a| \le n^{0.75}$, then let $b = a$ and terminate the process. Since $|I_{a+1}| < (1-r/2) \cdot |I_a|$ for every $a < b$, then $b \le (2/r) \log n$.

  Now, given an integer $m$, let $\boldsymbol{C}_m$ be the set of all vectors $\boldsymbol{v} \in \boldsymbol{B}_{t, \ell}$ with $\supp(\boldsymbol{v}) = S$ that are witnessed by our sequence and for which $2^mt < h(\boldsymbol{v} \circ M_{i_\ell}) \le 2^{m+1}t$. Since $h(\boldsymbol{a}) \le p$ for every vector $\boldsymbol{a}$ with $|\supp(\boldsymbol{a})| \ge |\boldsymbol{a}|^{1/4}$, then the set $\boldsymbol{C}_m$ is empty unless $t \le 2^m t \le p$ and hence $0 \le m \le \log_2 p \le n^{0.1}$.

  Fix any such $m$ and suppose that $\boldsymbol{v} \in \boldsymbol{C}_m$. Since $\{j_1, \dotsc, j_b\} \subseteq \{i_1, \dotsc, i_\ell\}$, it follows from the definition of a witnessing sequence that for every $a \in [b]$,
  \[
    h(\boldsymbol{v}|_{J_a}) \ge h(\boldsymbol{v} \circ M_{j_a}) \ge h(\boldsymbol{v} \circ M_{i_\ell}) > 2^mt,
  \]
  where $\boldsymbol{v}|_{J_a}$ is the restriction of $\boldsymbol{v}$ to the subset $J_a$ of its coordinates and the first inequality is due to the fact that $J_a \subseteq \supp(M_{j_a})$. In particular, $\boldsymbol{v}|_{J_a} \not\in \boldsymbol{H}_{2^m t}$ for every $a \in [b]$. However, since $J_a \subseteq S = \supp(\boldsymbol{v})$, then $|\supp(\boldsymbol{v}|_{J_a})| = |J_a| \ge n^{0.7} \ge |J_a|^{1/4}$, it follows from~\cref{lemma:counting-bad} that for every $a \in [b]$, there are at most $(4p)^{|J_j|} (2^mt)^{-|J_j|+|J_j|^{1/4}}$ possible values of $\boldsymbol{v}|_{J_a}$. Since $J_1, \dotsc, J_b$ are pariwise disjoint subsets of $S$ that cover all but at most $n^{0.75}$ of its elements and $\boldsymbol{C}_m$ contains only vectors whose support is $S$, we may conclude that
  \[
    |\boldsymbol{C}_m| \le p^{n^{0.75}} \cdot \prod_{a=1}^b \left(\frac{4p}{2^mt}\right)^{|J_j|} \cdot (2^mt)^{bn^{1/4}} \lesssim \left(\frac{4p}{2^mt}\right)^{|S|} \cdot \left(2^mtp\right)^{n^{0.75}} \lesssim \left(\frac{4p}{2^mt}\right)^n \cdot p^{2n^{0.75}}.
  \]

  On the other hand, it follows from the definition of a witnessing sequence that for each $\boldsymbol{v} \in \boldsymbol{C}_m$ and every $i \in T_{\boldsymbol{v}} \setminus \{i_0, i_1, \dotsc, i_\ell\}$, we have $h(\boldsymbol{v} \circ M_i) \le h(\boldsymbol{v} \circ M_{i_\ell}) \le 2^{m+1}t$ and thus, by \cref{lem:smallballdreg}, $\rho_{\F_p}(\boldsymbol{v} \circ M_i) \leq 2^{m+1}Ct/(pn^{1/16})$ for some absolute constant $C$. Consequently, every $\boldsymbol{v} \in \boldsymbol{C}_m$ satisfies
  \[
    \prod_{i \in [n] \setminus \{i_0\}} \rho_{\F_p}(\boldsymbol{v} \circ M_i) \le \prod_{i \in T_{\boldsymbol{v}} \setminus \{i_0, i_1, \dotsc, i_\ell\}} \rho_{\F_p}(\boldsymbol{v} \circ M_i) \le \left(\frac{2^{m+1}Ct}{pn^{1/16}}\right)^{|T_{\boldsymbol{v}}| - \ell} \le \left(\frac{2^{m+1}Ct}{pn^{1/16}}\right)^{n-2n^{0.7}}.
  \]

  Putting everything together, we see that
  \begin{align*}
    \sum_{\boldsymbol{v} \in \boldsymbol{C}_m} \prod_{i \in [n]\setminus \{i_0\}}\rho_{\F_p}(\boldsymbol{v} \circ M_i) & \lesssim \left(\frac{4p}{2^mt}\right)^n \cdot p^{2n^{0.75}} \cdot \left(\frac{2^{m+1}Ct}{pn^{1/16}}\right)^{n-2n^{0.7}} \\
                                                                                                                & \lesssim (8C)^n \cdot p^{4n^{0.75}} \cdot n^{-n/17} \\
                                                                                                                & \lesssim (8C)^n \cdot 2^{4n^{0.85}} \cdot n^{-n/17} \lesssim n^{-n/18},
  \end{align*}
  where the penultimate inequality holds because $p \le 2^{n^{0.1}}$. Since there are at most $n^{0.1} + 1$ relevant values of $m$, at most $n^{\ell}$ sequences $i_1, \dotsc, i_\ell$, and at most $2^n$ sets $S$, the claimed upper bound follows.
\end{proof}

The next lemma bounds the number of vectors $\boldsymbol{v}$ for which $\rho(\boldsymbol{v} \circ M_{i_0})$ has large atom probability. 
\begin{lemma} 
\label{lemma:rrd-number-bad-wrt-1}
The number of vectors $\boldsymbol{v}\in \F_p^{n}$ for which $\rho_{\F_p}(\boldsymbol{v} \circ M_{i_0}) \ge \rho$ is $O(p^{n-rn/4})$.
\end{lemma}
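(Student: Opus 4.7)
Let $T = \supp(M_{i_0})$, so $|T| = d = \lceil rn \rceil$. The plan is to exploit the fact that $\boldsymbol{v} \circ M_{i_0}$ depends only on the restriction $\boldsymbol{v}|_T$: writing $\boldsymbol{v} \in \F_p^n$ as the pair $(\boldsymbol{v}|_T, \boldsymbol{v}|_{T^c})$, the coordinates of $\boldsymbol{v}|_{T^c}$ are multiplied by zero and therefore do not influence $\rho_{\F_p}(\boldsymbol{v}\circ M_{i_0})$. Thus the quantity to be bounded equals $p^{n-d} \cdot N$, where $N$ denotes the number of vectors $\boldsymbol{a} \in \F_p^n$ with $\supp(\boldsymbol{a}) \subseteq T$ and $\rho_{\F_p}(\boldsymbol{a}) \ge p^{-1/2}$. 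It will suffice to prove $N \le p^{3d/4}$, which yields $p^{n-d} \cdot N \le p^{n-d/4} = p^{n - rn/4}$.

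To bound $N$, I will split on the size of $\supp(\boldsymbol{a})$. The small-support case, $|\supp(\boldsymbol{a})| < n^{1/4}$, contributes at most $\binom{d}{n^{1/4}} p^{n^{1/4}} \le (edp)^{n^{1/4}}$. Since $p \le 2^{n^{0.1}}$, this is negligible compared to $p^{3d/4}$ (using that $d = \Theta(n)$ and $n$ is large). For the large-support case, $|\supp(\boldsymbol{a})| \ge n^{1/4}$, I will apply the contrapositive of \cref{lem:smallballdreg} to $\boldsymbol{a}$, viewed as a vector in $\F_p^n$: the hypotheses of that lemma hold for this ambient dimension since $k = n^{1/8}$ and $p \le 2^{n^{0.1}} \le 2^{k/100}$. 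Setting $t^\ast := p^{1/2} n^{1/16}/(2C)$ (which exceeds $n^{1/4}$ comfortably), the lemma gives that any $\boldsymbol{a} \in \boldsymbol{H}_{t^\ast}$ satisfies $\rho_{\F_p}(\boldsymbol{a}) < p^{-1/2}$; hence every $\boldsymbol{a}$ counted in the large-support case must lie outside $\boldsymbol{H}_{t^\ast}$.

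To count such $\boldsymbol{a}$ with the additional constraint $\supp(\boldsymbol{a}) \subseteq T$, I will retrace the short argument giving \cref{lemma:counting-bad}, summing only over $S \subseteq T$ instead of $S \subseteq [n]$. By \cref{thm:counting-lemma}, for each $S \subseteq T$ with $|S| \ge n^{1/4}$, the number of $\boldsymbol{a}$ with $\supp(\boldsymbol{a}) = S$ and $\boldsymbol{a}|_S \in \Bad_{k,n^{1/4},\ge t^\ast}^{1/2}(|S|)$ is at most $(p/(\alpha t^\ast))^{|S|} (t^\ast)^{n^{1/4}}$. Summing over $S \subseteq T$ and using that $p/(\alpha t^\ast) = 4Cp^{1/2}/n^{1/16} \ge 1$ (for large $n$), the dominant contribution is the $|S|=d$ term, yielding
\[
\sum_{s=n^{1/4}}^{d} \binom{d}{s} \left(\frac{p}{\alpha t^\ast}\right)^{s} (t^\ast)^{n^{1/4}} \le \left(\frac{2p}{\alpha t^\ast}\right)^d (t^\ast)^{n^{1/4}} = \left(\frac{8C p^{1/2}}{n^{1/16}}\right)^d (t^\ast)^{n^{1/4}} \le p^{d/2 + n^{1/4}},
\]
where the last inequality uses that $8C/n^{1/16} \le 1$ and $t^\ast \le p$ for large $n$.

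Combining the two cases, $N \le p^{d/2 + n^{1/4}} + (edp)^{n^{1/4}} \le p^{3d/4}$, the last step holding because $d/2 + n^{1/4} \le 3d/4$ whenever $n^{1/4} \le rn/4$. Multiplying by the $p^{n-d}$ choices for $\boldsymbol{v}|_{T^c}$ delivers the claimed bound $O(p^{n - rn/4})$. The only slightly delicate point is the adaptation of \cref{lemma:counting-bad} to respect the support constraint $S \subseteq T$: without this restriction the bookkeeping would only give $O(p^{n/2 + n^{1/4}} \cdot p^{n-d}) = O(p^{3n/2 - d + o(n)})$, which is too weak unless $r$ is close to $1$.
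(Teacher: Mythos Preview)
Your proof is correct and follows essentially the same approach as the paper's: split on the size of $\supp(\boldsymbol{v}\circ M_{i_0})$, handle the small-support case by direct counting, and for the large-support case deduce $\boldsymbol{v}\circ M_{i_0}\notin\boldsymbol{H}_{t}$ via the contrapositive of \cref{lem:smallballdreg} and then count using \cref{thm:counting-lemma} with the summation over supports restricted to subsets of $\supp(M_{i_0})$. The only differences are cosmetic---the paper splits at $n^{0.8}$ rather than $n^{1/4}$ and takes $t=\rho p$ rather than your slightly tuned $t^\ast$, and it applies the support-restricted version of \cref{lemma:counting-bad} implicitly rather than spelling it out---but the substance is identical.
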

\begin{proof}
  We partition the set of relevant vectors $\boldsymbol{v}$ into two parts depending on the size of the support of $\boldsymbol{v} \circ M_{i_0}$. More precisely, we let
  \begin{align*}
    \boldsymbol{V_{\text{small}}} & := \{\boldsymbol{v} \in \F_p^n : |\supp(\boldsymbol{v} \circ M_{i_0})| \le n^{0.8}\}, \\
    \boldsymbol{V_{\text{large}}} & := \{\boldsymbol{v} \in \F_p^n : |\supp(\boldsymbol{v} \circ M_{i_0})| > n^{0.8} \text{ and } \rho_{\F_p}(\boldsymbol{v} \circ M_{i_0}) \ge \rho\}.
  \end{align*}
  Since $M_{i_{0}}$ is a fixed vector with exactly $d$ nonzero entries, then
  \begin{align*}
    |\boldsymbol{V_{\text{small}}}| \le \binom{d}{n^{0.8}} \cdot p^{n-d+n^{0.8}} \lesssim 2^d \cdot p^{n-d+n^{0.8}} \lesssim p^{n-rn/2},
  \end{align*}
  as $d \ge rn \gg n^{0.8}$ and $p \gg 1$. Observe that if $\boldsymbol{v} \in \boldsymbol{V_{\text{large}}}$, then $\boldsymbol{v} \circ M_{i_0} \not\in \boldsymbol{H}_{\rho p}$, as otherwise \cref{lem:smallballdreg} would imply that $\rho_{\F_p}(\boldsymbol{v} \circ M_{i_0}) \leq C\rho/n^{1/16}$ for some absolute constant $C$, contradicting the assumption that $\boldsymbol{v} \in \boldsymbol{V_{\text{large}}}$. In particular, \cref{lemma:counting-bad} implies that there are at most $(4p)^d(\rho p)^{-d+n^{1/4}}$ different restrictions of $\boldsymbol{v} \in \boldsymbol{V_{\text{large}}}$ to $\supp(M_{i_0})$. Recalling that $\rho = p^{-1/2}$, we obtain
  \[
    |\boldsymbol{V_{\text{large}}}| \le p^{n-d} \cdot (4p)^d \cdot (\rho p)^{-d+n^{1/4}} = 4^d \cdot p^{n-d/2+n^{1/4}/2} \lesssim p^{n - rn/3},
  \]
  where the last inequality holds as $rn \le d \le n$ and $p \gg 1$. We obtain the desired conclusion by summing the obtained upper bounds on $|\boldsymbol{V_{\text{small}}}|$ and $|\boldsymbol{V_{\text{large}}}|$.
\end{proof}

We now combine \cref{lem:smallballdreg,lemma:rrd-not-bad-wrt-many,lemma:rrd-number-bad-wrt-1} to derive the main result of this subsection.

\begin{proposition}
  \label{prop:rrd-bounding-Brho}
  We have
  \[
    \Pr(\mathcal{B}_{\rho}) \lesssim n^{-n/20} + p^{-rn/5}.
  \]
\end{proposition}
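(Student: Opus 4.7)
The plan is to start from the union bound in \cref{eq:B-rho-union-bound} and split the sum over $\boldsymbol{v} \in \boldsymbol{V}$ with $\rho_{\F_p}(\boldsymbol{v} \circ M_{i_0}) \geq \rho$ according to whether or not $\boldsymbol{v}$ belongs to the set $\boldsymbol{V_1} := \boldsymbol{B}_{n, \ell}$ of $(n, \ell)$-bad vectors, where $\ell = n^{0.7}$ as specified in the overview. The first piece
\[
  \sum_{\boldsymbol{v}\in \boldsymbol{V_1}}\prod_{i\in [n]\setminus \{i_0\}}\rho_{\F_p}(\boldsymbol{v}\circ M_i)
\]
is handled directly by applying \cref{lemma:rrd-not-bad-wrt-many} with $t = n$, which yields the contribution $\lesssim n^{-n/20}$. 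Note that the constraint $\rho_{\F_p}(\boldsymbol{v} \circ M_{i_0}) \geq \rho$ is simply discarded for this piece.

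For the remaining piece $\boldsymbol{v} \in \boldsymbol{V} \setminus \boldsymbol{V_1}$, the defining property of $\boldsymbol{B}_{n, \ell}$ says that at most $\ell - 1 < n^{0.7}$ indices $i \in T_{\boldsymbol{v}}$ satisfy $\boldsymbol{v} \circ M_i \not\in \boldsymbol{H}_n$. Since $|T_{\boldsymbol{v}}| \geq n - n^{0.6} - 1$, there are at least $n - 2n^{0.7}$ indices $i \in T_{\boldsymbol{v}}$ (for $n$ large) such that $\boldsymbol{v} \circ M_i \in \boldsymbol{H}_n$. For each such $i$, the vector $\boldsymbol{v} \circ M_i$ has dimension $n$, so our choice of parameters $k = n^{1/8}$, $p \leq 2^{n^{0.1}}$, and $t = n \geq n^{1/4}$ makes \cref{lem:smallballdreg} applicable, giving $\rho_{\F_p}(\boldsymbol{v} \circ M_i) \leq C n^{15/16}/p$ for some absolute constant $C = C(1/2)$. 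Bounding the remaining at most $2n^{0.7}$ factors trivially by $1$, every $\boldsymbol{v} \in \boldsymbol{V}\setminus \boldsymbol{V_1}$ satisfies
\[
  \prod_{i\in [n]\setminus \{i_0\}}\rho_{\F_p}(\boldsymbol{v} \circ M_i) \leq \left(\frac{C n^{15/16}}{p}\right)^{n - 2n^{0.7}}.
\]
Combining this with the upper bound $O(p^{n - rn/4})$ from \cref{lemma:rrd-number-bad-wrt-1} on the number of $\boldsymbol{v}$ with $\rho_{\F_p}(\boldsymbol{v}\circ M_{i_0}) \geq \rho$ yields that the second piece is at most
\[
  O(p^{n - rn/4}) \cdot \left(\frac{C n^{15/16}}{p}\right)^{n - 2n^{0.7}} \lesssim C^n \cdot n^{15n/16} \cdot p^{-rn/4 + 2n^{0.7}}.
\]
Since $p \geq 2^{n^{0.1}}/2$, both $C^n$ and $n^{15n/16}$ are absorbed into $p^{o(n)}$, so this contribution is $\lesssim p^{-rn/5}$. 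Summing the two pieces gives the desired bound $\Pr(\mathcal{B}_\rho) \lesssim n^{-n/20} + p^{-rn/5}$.

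The only step requiring any genuine care is verifying that the hypotheses of \cref{lem:smallballdreg} remain valid for the vectors $\boldsymbol{v} \circ M_i$ of dimension $n$ (in particular, that $p \leq 2^{k/100}$ with our choice $k = n^{1/8}$ and $p \leq 2^{n^{0.1}}$, which holds since $n^{0.1} \leq n^{1/8}/100$ for $n$ sufficiently large), and that the loss from the $2n^{0.7}$ trivially-bounded factors is dwarfed by the savings from $p^{-rn/4}$ in \cref{lemma:rrd-number-bad-wrt-1}. Both are routine given the choice of parameters in \cref{sec:rrd-overview}, and no additional ingredients beyond \cref{lem:smallballdreg,lemma:rrd-not-bad-wrt-many,lemma:rrd-number-bad-wrt-1} are needed.
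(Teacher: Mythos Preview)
Your proof is correct and follows essentially the same route as the paper's: split the union bound~\cref{eq:B-rho-union-bound} according to membership in $\boldsymbol{B}_{n,\ell}$, apply \cref{lemma:rrd-not-bad-wrt-many} to the first piece, and for the second piece combine the per-factor bound from \cref{lem:smallballdreg} (on the $\ge n - 2n^{0.7}$ good indices) with the count from \cref{lemma:rrd-number-bad-wrt-1}. The only cosmetic difference is that you retain the sharper bound $Cn^{15/16}/p$ from \cref{lem:smallballdreg} whereas the paper rounds it up to $n/p$; the final arithmetic is otherwise identical.
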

\begin{proof}
  Recall from~\cref{eq:B-rho-union-bound} and our definition $\boldsymbol{V_1} = \boldsymbol{B}_{n, \ell}$ that
  \begin{equation}
    \label{eqn:rrd-boundB_rho}
    \Pr(\mathcal{B}_{\rho}) \le \sum_{\boldsymbol{v}\in \boldsymbol{B}_{n,\ell}}\prod_{i\in[n]\setminus \{i_0\}}\rho_{\F_{p}}(\boldsymbol{v} \circ M_i)+\sum_{\substack{\boldsymbol{v} \in\boldsymbol{V} \setminus \boldsymbol{B}_{n,\ell} \\ \rho_{\F_p}(\boldsymbol{v} \circ M_{i_0})\geq \rho}} \prod_{i\in[n]\setminus \{i_0\}}\rho_{\F_p}(\boldsymbol{v} \circ M_i).
  \end{equation}

  \cref{lemma:rrd-not-bad-wrt-many} states that the first term in the right-hand side of \cref{eqn:rrd-boundB_rho} is  $O(n^{-n/20})$. In order to bound the second term, note that for any $\boldsymbol{v} \in \boldsymbol{V} \setminus \boldsymbol{B}_{n,\ell}$, there are at least $|T_{\boldsymbol{v}}|-\ell \geq n-2n^{0.7}$ indices $i \in [n]\setminus \{i_0\}$ for which $\boldsymbol{v} \circ M_i \in \boldsymbol{H}_{n}$. \cref{lem:smallballdreg} implies that $\rho_{\F_p}(\boldsymbol{v} \circ M_i) \leq n/p$ for each such index. In particular, if $\boldsymbol{v} \in \boldsymbol{V} \setminus \boldsymbol{B}_{n,\ell}$, then
  \[
    \prod_{i\in[n] \setminus \{i_0\}}\rho_{\F_p}(\boldsymbol{v} \circ M_i) \leq (n/p)^{n-2n^{0.7}}.
  \]
  Since \cref{lemma:rrd-number-bad-wrt-1} implies that
  \[
    |\{\boldsymbol{v}\in\boldsymbol{V}:\rho_{\F_{p}}(\boldsymbol{v} \circ M_{i_0})\geq \rho\}|\lesssim p^{n-rn/4},
  \]
  it follows that the second term in \cref{eqn:rrd-boundB_rho} is bounded from above by 
  \[
    p^{n-rn/4} \cdot (n/p)^{n-2n^{0.7}} \lesssim p^{-rn/4} \cdot p^{2n^{0.7}} \cdot n^n \lesssim p^{-rn/5},
  \]
  where the last inequality follows as $p \gg n$.
\end{proof}

\subsection{Proof of \cref{thm:singularity-rrd}}
The main result of this section is now immediate. 
\begin{proof} [Proof of \cref{thm:singularity-rrd}]
  Recall from \cref{sec:rrd-overview} that for every positive $\rho$,
  \begin{equation}
    \label{eqn:bound-rrd-sing}
    \Pr(M^{\pm}_{n,d}\text{ is singular}) \leq \Pr(\mathcal{S}^c) + \Pr(M_{n,d} \not\in \mathcal{E}_{n,d}) + n \cdot \big( \Pr(\mathcal{B}_{\rho}) + \rho\big).
  \end{equation}
  We know from \cref{lemma:rrd-no-small-supp} that $\Pr(\mathcal{S}^{c}) \lesssim 2^{-d/2}$, from \cref{lemma:rrd-expanding-base-whp} that $\Pr(M_{n,d}\in \EE_{n,d}^{c}) \lesssim 2^{-rn^{1.2}/10} $, and from \cref{prop:rrd-bounding-Brho} that $\Pr(B_\rho) \lesssim n^{-n/20} + p^{-rn/5}$. Thus the dominant term in \cref{eqn:bound-rrd-sing} is $n\rho$. Recalling that $\rho = p^{-1/2}$ and $p \ge 2^{n^{0.1}}/2$, we conclude that the right-hand side of \cref{eqn:bound-rrd-sing} can be bounded from above by $Cn \cdot 2^{-n^{0.1}/2}$ for some absolute constant $C$. This gives the desired conclusion.  
\end{proof}

\section{Singularity of random row-regular matrices: proof of \cref{thm:singularity-row-regular}}
\label{sec:singularity-row-reg}

\subsection{Overview of the proof and preliminary reductions}
\label{sec:overview-row-reg}

The proof of \cref{thm:singularity-row-regular} is very similar to the proof of \cref{thm:singularity-rrd}, as will be clear from the following overview. Throughout this section, we will assume that $n$ is even. Recall that $\QQ_n$ denotes the set of all $n\times n$ matrices with entries in $\{0,1\}$ each of whose rows sums to $n/2$. We will prove the stronger statement that a uniformly chosen random matrix $Q_n \in \QQ_n$ is non-singular even over $\F_p$, for a suitably chosen prime $p$, with extremely high probability.

As a first step, let $\mathcal{S}^{c}$ denote the event that some `almost constant' vector $\boldsymbol{v} \in \F_p^{n}\setminus\{\boldsymbol{0}\}$, i.e., a vector almost all of whose coordinates (all but at most $n^{0.8}$) have the same value, satisfies $Q_{n}\boldsymbol{v} = 0$. More precisely, for a vector $\boldsymbol{v} \in \F_p^n$, we define
\[
  L(\boldsymbol{v}) = \max_{x \in \F_p} |\{i \in [n] : v_i = x\}|
\]
and let $\mathcal{S}^c$ be the event that $Q_n \boldsymbol{v}  = 0$ for some nonzero $\boldsymbol{v}$ with $L(\boldsymbol{v}) \ge n - n^{0.8}$. We will show that $\Pr(\mathcal{S}^{c})$ is extremely small (\cref{prop:eliminate-large-levelset}), so that it will suffice to bound $\Pr(Q_n\text{ is singular}\cap \mathcal{S})$ from above.

As in the previous proof, we will find it more convenient (as will be explained later in this subsection) to work with the following representation of a uniformly random element of $\QQ_n$. Let $\Sigma_{n}$ denote the set of all permutations of $[n]$ and consider the map
\[
  f \colon (\Sigma_{n})^{n} \times \left(\{0,1\}^{n/2}\right)^{n} \to \QQ_n,
\]
which takes $\big((\sigma_1,\dots,\sigma_n), \xi_1,\dots,\xi_n\big)$ to the matrix in $\QQ_n$ whose $i^{th}$ row is $(q_{i1}, \dotsc, q_{in})$, where
\[
  q_{ij} =
  \begin{cases}
    \xi_i(k) & \text{if $\sigma_i(2k-1) = j$},\\
    1 - \xi_i(k) & \text{if $\sigma_i(2k) = j$}.
  \end{cases}
\]
In other words, for each $k \in [n/2]$, exactly one among the $\sigma_i(2k-1)^{st}$ and the $\sigma_i(2k)^{th}$ entries in the $i$th row is equal to $1$ (the other is equal to $0$) and the value of $\xi_i(k)$ determines which one of the two entries it is. It is straightforward to see that the pushforward measure of the uniform measure on $(\Sigma_n)^{n} \times \left(\{0,1\}^{n/2}\right)^{n}$ under the map $f$ is the uniform measure on $\QQ_n$. In other words, the following process generates a uniformly random element of $\QQ_n$. First, choose a sequence $\boldsymbol{\sigma} = (\sigma_1,\dots,\sigma_n)$ of i.i.d.\ uniformly random elements of $\Sigma_n$. Second, for each $i \in [n]$ and each $k \in [n/2]$, choose exactly one among the $\sigma_i(2k-1)^{st}$ entry and the $\sigma_i(2k)^{th}$ entry in the $i^{th}$ row of the matrix to be $1$ (and the other to be $0$) uniformly at random, independently for each pair of indices $i$ and $k$. We shall refer to $\boldsymbol{\sigma}$ as the \emph{base} of the matrix $Q_n$. Let us note here that for each $i \in [n]$, the set comprising the $n/2$ unordered pairs $\{\sigma_i(2k-1), \sigma_i(2k)\}$, for all $k \in [n/2]$, is a uniformly random perfect matching in $K_n$ -- the complete graph on the vertex set $[n]$; we shall refer to this matching as the matching induced by $\sigma_i$.

In analogy with the signed r.r.d.\ case, we will first condition on a `good' realization of the base $\boldsymbol{\sigma}$ and later use only the randomness of $\boldsymbol{\xi}:= (\xi_1,\dotsc,\xi_n)$. More precisely, we will identify a subset $\EE_n \subseteq \left(\Sigma_{n}\right)^{n}$ of bases with suitable `expansion' properties and use the following chain of inequalities. Denote by $Q_{\boldsymbol{\sigma}}$ the random matrix chosen uniformly among all the matrices in $\QQ_n$ with base $\boldsymbol{\sigma}$ and by $\boldsymbol{\tau} \in (\Sigma_n)^n$ the vector of i.i.d.\ uniformly random permutations. Then,
\begin{align*}
  \Pr(Q_n\text{ is singular}\cap\mathcal{S}) & = \Pr(Q_{\boldsymbol{\tau}} \text{ is singular} \cap \mathcal{S}) \\
                                               &\leq \Pr\big(Q_{\boldsymbol{\tau}} \text { is singular}\cap \mathcal{S} \cap (\boldsymbol{\tau} \in \EE_n) \big) + \Pr\left(\boldsymbol{\tau} \not\in \EE_n\right)\\
                                               &\leq \sup_{\boldsymbol{\sigma} \in \EE_n} \Pr(Q_{\boldsymbol{\sigma}} \text { is singular}\cap \mathcal{S}) + \Pr\left(\boldsymbol{\tau} \not\in \EE_n\right).
\end{align*}
Roughly speaking, a base $\boldsymbol{\sigma}$ belongs to $\EE_n$ if the following two conditions are met: for every pair of distinct $i, j \in [n]$, the union of the perfect matchings induced by $\sigma_i$ and $\sigma_j$ has relatively few (at most $n^{0.6}$) connected components; and for every pair $A, B \subseteq [n]$ of disjoint sets, each of which is somewhat large (of size at least $n^{0.8}$), the matching induced by almost every $\sigma_i$ (all but at most $\sqrt{n}/2$) contains many edges with one endpoint in each of $A$ and $B$. As before, it will turn out that these `expansion' properties we require from the base permutation are fairly mild and can easily be proved to hold with very high probability (in \cref{prop:expanding-base-perm-whp}) using two somewhat ad hoc large deviation inequalities (\cref{lemma:random-matching,lemma:cycles-perm}).

In analogy with the signed r.r.d.\ case, the main part of the argument is bounding the supremum above. Fix a $\boldsymbol{\sigma} \in \EE_n$, denote the (random) rows of $Q_{\boldsymbol{\sigma}}$ by $W_1, \dotsc, W_n$, and let $S_i = \Span\{W_1, \dotsc, W_{i-1}, W_{i+1}, \dotsc, W_n\}$. Moreover, denote by $\boldsymbol{V}$ the set of all vectors in $\boldsymbol{v} \in \F_p^n$ with $L(\boldsymbol{v}) < n - n^{0.8}$. An elementary reasoning analogous to the one we used in the signed r.r.d.\ case shows that
\begin{align*}
  \Pr\left(Q_{\boldsymbol{\sigma}}\text{ is singular}\cap \mathcal{S}\right)
  & \leq \sum_{i=1}^{n}\Pr\left(W_i\cdot \boldsymbol{v} = 0 \text{ for all }\boldsymbol{v} \in S_i^{\perp}\cap \boldsymbol{V}\right)\\
  & \leq n \cdot \max_{i\in [n]}\left(\E \left[\inf_{\boldsymbol{v} \in S_i^{\perp}\cap \boldsymbol{V}}\Pr\left(W_i\cdot \boldsymbol{v} =0 \mid S_i \right) \right]\right).
\end{align*}
Let $i_0 \in [n]$ be an index that attains the maximum in the above expression. In order to define an analogue of the event $\mathcal{B}_\rho$ from the previous section, we need to take a little detour and explain how we will bound from above the probability that $W_i \cdot \boldsymbol{v} = 0$.

Since the entries of the random vector $W_i$ are not independent, we cannot use standard anti-concentration techniques directly. However, we may rewrite $W_i \cdot \boldsymbol{v}$ as follows:
\[
  W_i \cdot \boldsymbol{v} = \sum_{k=0}^{n/2} \frac{v_{\sigma_i(2k-1)} + v_{\sigma_i(2k)}}{2} + \sum_{k=0}^{n/2}\left(1-2\xi_i(k)\right) \frac{v_{\sigma_i(2k-1)}  -  v_{\sigma_i(2k)}}{2}.
\]
Since $(1-2\xi_i(1)), \dotsc, (1-2\xi_i(n/2))$ are i.i.d.\ Rademacher random variables independent of $\sigma_i$, then, letting $\boldsymbol{v}_{\sigma_i} \in \F_p^{n/2}$ be the vector whose $k^{th}$ coordinate is $(v_{\sigma_i(2k-1)}-v_{\sigma_i(2k)})/2$, we see that
\begin{equation}
  \label{eq:pairanticoncentration}
  \sup_{x\in \F_p}\Pr(W_i \cdot \boldsymbol{v} = x  \mid \sigma_i) \leq \rho_{\F_p}(\boldsymbol{v}_{\sigma_i}).
\end{equation}

For every $\rho > 0$, let $\mathcal{B}_{\rho}$ be the event that there exists a vector $\boldsymbol{v} \in S_{i_0}^{\perp}\cap \boldsymbol{V}$ such that $\rho_{\F_p}(\boldsymbol{v}_{\sigma_{i_0}})\geq \rho$. We may conclude that
\[
  \Pr\left(Q_{\boldsymbol{\sigma}}\text{ is singular}\cap\mathcal{S}\right) \leq n \cdot \inf_{\rho > 0} \big(\Pr\left(\mathcal{B}_{\rho}\right) + \rho \big).
\]
It remains to bound $\Pr(\mathcal{B}_\rho)$ from above. By the union bound, 
\begin{align*}
  \Pr(\mathcal{B}_{\rho}) 
  & \leq \sum_{\substack{\boldsymbol{v}\in\boldsymbol{V} \\ \rho_{\F_p}(\boldsymbol{v}_{\sigma_{i_0}})\geq \rho}}\Pr\big(W_i \cdot \boldsymbol{v} = 0\text{ for all } i \in [n] \setminus \{i_0\}\big)\\
  & \leq  \sum_{\substack{\boldsymbol{v}\in \boldsymbol{V} \\\rho_{\F_p}(\boldsymbol{v}_{\sigma_{i_0}})\geq \rho}}\prod_{i\in [n]\setminus\{i_0\}}\rho_{\F_p}(\boldsymbol{v}_{\sigma_i}).
\end{align*}
As before, we will control the sum on the right-hand side above in two stages. First, we will bound from above the sum over those vectors $\boldsymbol{v}$ for which two of the values $\rho_{\F_p}(\boldsymbol{v}_{{\sigma}_{i}})$, among a set $T_{\boldsymbol{v}}$ of typical indices $i$, are large; we term such vectors $\boldsymbol{v}$ `bad'. For this, we stratify the set of bad vectors, essentially according to the order of magnitude of $\prod_{i \in [n] \setminus \{i_0\}} \rho_{\F_p}(\boldsymbol{v}_i)$, and use the corollary of our counting theorem (\cref{lemma:counting-bad}) along with the expansion property of the base $\boldsymbol{v}$ to control the number of vectors in each stratum (\cref{lem:not-bad-wrt-2}). Later, we will control the sum over the remaining vectors (\cref{lemma:few-bad-wrt-1,prop:rrm-bounding-Brho}).

We present complete details below. As stated earlier, we make no attempt to optimize the constant $c$ in our bound on the singularity probability. Consequently, we choose various parameters conveniently (but otherwise somewhat arbitrarily) in order to simplify the exposition. Throughout, $n$ is a sufficiently large even integer, $\alpha = 1/2$, $k = n^{1/8}/2$, and $p$ is an arbitrary prime satisfying $2^{n^{0.1}}/2 \le p \le 2^{n^{0.1}}$. Note that this choice of parameters makes \cref{lem:smallballdreg} applicable to vectors $\boldsymbol{a} \in \F_p^{n/2} \cap \boldsymbol{H}_t$ as long as $t \ge n^{1/4}$.

\subsection{Two large deviation inequalities}
\label{sec:two-large-deviation}

In this section, we derive large deviation inequalities for two simple functions of a uniformly random perfect matching of $K_n$ (recall that we have assumed that $n$ is even).

\begin{lemma}
  \label{lemma:random-matching}
  Suppose that $A$ and $B$ are two disjoint subsets of $[n]$ and let $M$ be a uniformly random perfect matching in $K_n$. Then
  \[
    \Pr\left(\left|\big\{\{u,v\} \in M : u \in A \text{ and } v \in B\big\}\right| \le \frac{|A||B|}{8n}\right) \le \exp\left(-\frac{|A||B|}{32n}\right).
  \]
\end{lemma}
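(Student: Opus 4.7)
The plan is to generate $M$ by a sequential pair-exposure process and bound $X := |\{\{u,v\} \in M : u \in A, v \in B\}|$ from below by a binomial random variable, to which a standard Chernoff inequality applies. By symmetry of the statement in $A$ and $B$, assume $a := |A| \le |B| =: b$; and since the uniform perfect matching distribution is invariant under relabelling of $[n]$, assume further that $A = \{1,\dotsc,a\}$ and $B = \{a+1,\dotsc,a+b\}$. Note that $a \le n/2$ since $a \le b$ and $a + b \le n$. Set $t := ab/(8n)$.

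Generate $M$ sequentially: at each step $i$, let $v_i$ be the smallest unmatched vertex and let $w_i$ be uniform over the remaining unmatched vertices other than $v_i$. Because $A$ occupies the smallest indices, the ``$A$-phase'' (steps with $v_i \in A$) is an initial segment $\{1,\dotsc,T_A\}$ with $T_A \ge a/2$, since each such step removes at most two $A$-vertices. No $A$--$B$ edge is produced outside the $A$-phase (once no $A$-vertex remains unmatched, all later partners lie outside $A$), so letting $X_i := \mathbf{1}[w_i \in B]$ for $i \le T_A$, we have $X = \sum_{i=1}^{T_A} X_i$. At step $i$ the conditional success probability is $p_i' := b_i/(|U_i|-1) \ge (b - X_{i-1})/n$, where $b_i$ counts unmatched $B$-vertices; in particular, on the event $\{X_{i-1} \le t\}$ one has $p_i' \ge p := (b-t)/n$.

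I will introduce an auxiliary i.i.d.\ sequence $\eta_1, \eta_2, \dotsc \sim \mathrm{Uniform}[0,1]$ and couple by setting $X_i := \mathbf{1}[\eta_i \le p_i']$ (which has the correct conditional distribution) and $\xi_i := \mathbf{1}[\eta_i \le p]$; the $\xi_i$ are i.i.d.\ $\mathrm{Bernoulli}(p)$, and $\xi_i \le X_i$ holds whenever $p_i' \ge p$. The crucial observation is that the global bad event $\{X \le t\}$ already forces $X_{i-1} \le t$ for \emph{every} $i$, and therefore $p_i' \ge p$ throughout the entire $A$-phase on this event. Hence on $\{X \le t\}$ we have $\sum_{i=1}^{a/2} \xi_i \le \sum_{i=1}^{T_A} X_i = X \le t$, which yields
\[
  \Pr(X \le t) \le \Pr\!\big(\mathrm{Bin}(a/2,\, p) \le t\big).
\]

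It remains to estimate this binomial tail. The mean is $\mu = (a/2)p \ge 15ab/(32n)$ (using $t \le b/16$, which follows from $a \le n/2$), so $t/\mu \le 4/15$. A standard Chernoff bound $\Pr[\mathrm{Bin}(N,p) \le (1-\delta)\mu] \le \exp(-\delta^2\mu/2)$ applied with $\delta \ge 11/15$ shows $\delta^2\mu/2 \ge 121ab/(960n) \ge ab/(32n)$, giving the desired estimate. The main subtle point is the conditional-coupling step: the inequality $\xi_i \le X_i$ only holds when $p_i' \ge p$, which is not a deterministic fact. The resolution, as used above, is that $p_i' \ge p$ is implied by $X_{i-1} \le t$ and hence by the global event $\{X \le t\}$ that we are bounding; a fully formal treatment can introduce the stopping time $\tau := \inf\{i : X_{i-1} > t\}$ and observe $\{X \le t\} \subseteq \{\tau = \infty\}$, on which the coupling inequality holds for all $i$ in the $A$-phase.
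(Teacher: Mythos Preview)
Your proof is correct and follows essentially the same sequential-exposure/binomial-domination/Chernoff route as the paper. One simplification available (and used in the paper): since at most $i-1 < a/2 \le b/2$ of the $B$-vertices can have been consumed by step $i \le \lceil a/2\rceil$, the bound $p_i' \ge |B|/(2n)$ holds \emph{deterministically}, which gives the stochastic domination by $\mathrm{Bin}(\lceil a/2\rceil, b/(2n))$ immediately and lets you dispense with the conditional-coupling/stopping-time discussion entirely.
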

\begin{proof}
  Without loss of generality, we may assume that $|A| \le |B|$. Consider the following procedure for generating $M$ one edge at a time. Start with $M_0$ being the empty matching and do the following for $i = 1, \dotsc, \lceil |A|/2 \rceil$. First, let $u_i$ be an arbitrarily chosen element of $A$ that is not covered by $M_{i-1}$; there is at least one such element as $M_{i-1}$ is a matching with $i-1$ edges and $2(i-1) < |A|$. Second, let $v_i$ be a uniformly random element of $[n] \setminus \{u_i\}$ that is not covered by $M_{i-1}$ and let $M_i = M_{i-1} \cup \{\{u_i, v_i\}\}$, so that $M_i$ is a matching comprising the $i$ edges $\{u_1, v_1\}, \dotsc, \{u_i, v_i\}$. Finally, let $M = M_{\lceil |A|/2 \rceil} \cup M'$, where $M'$ is a uniformly random perfect matching of the vertices of $K_n$ that are left uncovered by $M_{\lceil |A|/2 \rceil}$. Observe that for each $i \in \{1, \dotsc, \lceil|A|/2\rceil\}$,
  \[
    \Pr\big(v_i \in B \mid u_1, v_1, \dotsc, u_{i-1}, v_{i-1}, u_i\big) = \frac{|B \setminus \{v_1, \dotsc, v_{i-1}\}|}{n-2i+1} \ge \frac{|B|}{2n}.
  \]
  Thus, the number of indices $i$ for which  $v_i \in B$ can be bounded from below by a binomial random variable with parameters $\lceil |A|/2 \rceil$ and $|B|/(2n)$. Consequently, standard tail estimates for binomial distributions yield
  \[
    \Pr\left(\left|\big\{i : v_i \in B\big\}\right| \le \frac{|A||B|}{8n} \right) \le \exp\left(-\frac{|A||B|}{32n}\right),
  \]
  which implies the assertion of the proposition, as $u_i \in A$, $v_i \in B$, and $\{u_i, v_i\} \in M$ for every $i$.
\end{proof}

\begin{lemma}
  \label{lemma:cycles-perm}
  Let $M$ be a uniformly random perfect matching in $K_n$. Then for every fixed perfect matching $M'$ in $K_n$,
  \[
    \Pr\big(\text{$M \cup M'$ has more than $2\sqrt{n}$ connected components}\big) \le 2^{-\sqrt{n}/2}.
  \]
\end{lemma}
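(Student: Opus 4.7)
The plan is to generate the random matching $M$ one edge at a time and track how many connected components of $M \cup M'$ are formed as we reveal edges. First I would observe that the number of components of $M \cup M'$ equals the number of edges of $M$ that ``close a cycle'' rather than merge two previously-distinct components: the graph $M'$ alone has $n/2$ components, each edge of $M$ either connects two distinct current components or joins two vertices of the same current component, and since $M$ contributes $n/2$ edges in total, a telescoping count gives that the final number of components is exactly the number of closes.

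Next, I would make the generating process explicit. At step $i$, let $u_i$ be the least-indexed uncovered vertex and draw $v_i$ uniformly from the $n-2i+1$ remaining uncovered vertices. At this moment, the partially revealed graph (namely $M'$ together with the edges $\{u_1,v_1\},\dots,\{u_{i-1},v_{i-1}\}$) is a disjoint union of paths and cycles; the uncovered vertices are precisely the degree-one endpoints, and each path has exactly two such endpoints. Consequently, $u_i$ has a unique ``partner'' uncovered vertex lying in its own component, and the conditional probability that step $i$ closes a cycle is exactly $1/(n-2i+1)$, regardless of the history.

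Letting $X_i$ be the indicator that step $i$ closes a cycle, the number $C$ of components of $M \cup M'$ equals $X_1 + \dots + X_{n/2}$. By iterated conditioning, for any fixed $i_1 < \dots < i_k$ we have $\Pr(X_{i_1} = \dots = X_{i_k} = 1) \le \prod_j 1/(n-2i_j+1)$. A union bound over $k$-subsets then yields
\[
\Pr(C \ge k) \;\le\; \sum_{i_1 < \dots < i_k} \prod_{j=1}^k \frac{1}{n-2i_j+1} \;\le\; \frac{1}{k!}\left(\sum_{i=1}^{n/2}\frac{1}{n-2i+1}\right)^{\!k} \;\le\; \frac{(\log n)^k}{k!},
\]
where the final inequality uses that the partial harmonic sum over odd denominators is at most $\log n$ for $n$ large. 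Taking $k = \lceil 2\sqrt{n}\rceil$ and using $k! \ge (k/e)^k$ bounds the right-hand side by $(e\log n/(2\sqrt{n}))^{2\sqrt{n}}$, which is much smaller than $2^{-\sqrt{n}/2}$ for all sufficiently large $n$.

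The only real subtlety is justifying that the conditional probability of a closure is \emph{exactly} $1/(n-2i+1)$ at every step regardless of history; this rests on the structural fact that the partially revealed graph always decomposes into paths with exactly two uncovered endpoints (together with completed cycles), so that $u_i$ always has precisely one ``partner'' among the uncovered vertices. Once that is in hand, the rest is an elementary counting estimate and I do not anticipate any serious obstacle.
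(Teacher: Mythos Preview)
Your proof is correct and follows essentially the same approach as the paper: reveal $M$ one edge at a time, observe that at step $i$ exactly one of the $n-2i+1$ choices for $v_i$ closes a cycle (so the number of components is distributed as a sum of Bernoulli$(1/(2j-1))$ variables), and finish with a union bound over $k$-subsets of closing events. The only cosmetic difference is in the final tail estimate: the paper discards the first $\sqrt{n}$ indicators (which trivially sum to at most $\sqrt{n}$) and bounds the remaining Bernoulli parameters uniformly by $1/(2\sqrt{n}+1)$, obtaining $(e/4)^{\sqrt{n}}$, whereas you keep all terms and use the harmonic-sum bound $\sum_{j} 1/(2j-1) \le \log n$ to get $(\log n)^k/k!$; both routes give the claimed $2^{-\sqrt{n}/2}$ for large $n$.
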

\begin{proof}
  Observe first that $M \cup M'$ is a union of even cycles and isolated edges (the edges in $M \cap M'$). We will view the isolated edges as cycles of length two so that the number of connected components of $M \cup M'$ equals the number of its cycles. Note that $M$ can be represented as $\{u_1, v_1\}, \dotsc, \{u_{n/2}, v_{n/2}\}$, where for each $i \in [n/2]$, the ordered pair $(u_i, v_i)$ is a uniformly random pair of distinct vertices of $K_n \setminus \{u_1, v_1, \dotsc, u_{i-1}, v_{i-1}\}$. The crucial observation is that after we condition on $u_1, v_1, \dotsc, u_{i-1}, v_{i-1}$ and $u_i$, there is exactly one (out of $n-2i+1$) choice for $v_i$ such that $\{u_i, v_i\}$ closes a cycle in the graph $M' \cup \{u_1, v_1\} \cup \dotsb \cup \{u_{i-1}, v_{i-1}\}$; this unique $v_i$ is the endpoint of the longest path (in the above graph) that starts at $u_i$. Consequently, the number $X$ of cycles in $M \cup M'$ has the same distribution as the sum of $n/2$ independent Bernoulli random variables $X_1, \dotsc, X_{n/2}$, where $\E[X_j] = 1/(2j-1)$. In particular,
  \begin{align*}
    \Pr\left(X \ge 2\sqrt{n}\right) & \le \Pr\left(\sum_{j=\sqrt{n}+1}^{n/2} X_j \ge \sqrt{n} \right) \le \binom{n/2-\sqrt{n}}{\sqrt{n}} \cdot \left(\frac{1}{2\sqrt{n}+1}\right)^{\sqrt{n}} \\
    & \le \left(\frac{en/2}{\sqrt{n}} \cdot \frac{1}{2\sqrt{n}}\right)^{\sqrt{n}} = \left(\frac{e}{4}\right)^{\sqrt{n}} \le 2^{-\sqrt{n}/2}.\qedhere
  \end{align*}
\end{proof}

\subsection{Eliminating potential null vectors that are almost constant} 

Recall from \cref{eq:pairanticoncentration} that we wish to use the bound 
\[
  \sup_{x\in \F_p}\Pr(W_i \cdot \boldsymbol{v} = x \mid \sigma_i) \leq \rho_{\F_p}(\boldsymbol{v}_{\sigma_i}).
\]
Note that if a vector $\boldsymbol{v}$ has large $L(\boldsymbol{v})$, then the vector $\boldsymbol{v}_{\sigma_i}$ has very small support. In this subsection, analogously to the step in the signed r.r.d.\ case where we eliminated potential null vectors with small support, we will eliminate potential null vectors with large $L(\boldsymbol{v})$. The goal of this subsection is to prove the following proposition. 

\begin{proposition}
  \label{prop:eliminate-large-levelset}
  If $Q_n$ is a uniformly random element of $\QQ_n$, then
  \[
    \Pr\left(\exists \boldsymbol{v} \in \F_p^n \setminus\{\boldsymbol{0}\} \text{ such that } Q_n\boldsymbol{v}=\boldsymbol{0} \text{ and } L(\boldsymbol{v}) \geq n-n^{0.8}\right)  \lesssim 2^{-n/100}.
  \]
\end{proposition}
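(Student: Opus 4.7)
The plan is a union bound over all candidate nonzero vectors $\boldsymbol{v} \in \F_p^n$ with $L(\boldsymbol{v}) \ge n - n^{0.8}$. Every such $\boldsymbol{v}$ is specified by a value $x \in \F_p$ attained on the large level set, an exceptional set $T := \{j \in [n] : v_j \neq x\}$, and the values $(v_j)_{j \in T} \in (\F_p \setminus \{x\})^T$; write $d := |T| \le n^{0.8}$.

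First I would dispose of $d = 0$: then $\boldsymbol{v} = x\mathbf{1}$ with $x \neq 0$ and $Q_n \boldsymbol{v} = x(n/2)\mathbf{1}$, which is nonzero in $\F_p^n$ since $p \ge 2^{n^{0.1}}/2 > n/2$ for sufficiently large $n$. For $d \ge 1$, I would condition on the base $\boldsymbol{\sigma}$ and exploit only the randomness of $\boldsymbol{\xi}$. Using the conditional identity from \cref{sec:overview-row-reg},
\[
  W_i \cdot \boldsymbol{v} = C_{\sigma_i} + \sum_{k=1}^{n/2} (1 - 2\xi_i(k))\, b_{i,k}, \qquad b_{i,k} := (v_{\sigma_i(2k-1)} - v_{\sigma_i(2k)})/2,
\]
and letting $c_i$ count the indices $k$ with $b_{i,k} \neq 0$ (equivalently, the matching edges of $\sigma_i$ whose endpoints carry distinct $\boldsymbol{v}$-values), a single-coordinate marginalization over any $\xi_i(k_0)$ with $b_{i,k_0} \neq 0$ shows that $\Pr(W_i \cdot \boldsymbol{v} = 0 \mid \sigma_i) \le 1/2$ whenever $c_i \ge 1$ (the two values of $(1-2\xi_i(k_0))\, b_{i,k_0}$ are distinct in $\F_p$ because $p$ is odd). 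Hence $\Pr(W_i \cdot \boldsymbol{v} = 0) \le 1/2 + \Pr(c_i = 0)$.

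Every matching edge of $\sigma_i$ with exactly one endpoint in $T$ has distinct endpoint-values (since $v_j = x$ on $S := [n] \setminus T$ and $v_j \neq x$ on $T$), so $c_i = 0$ forces every $T$-vertex to be matched inside $T$. For $d$ odd this is impossible; for $d$ even, a direct count of perfect matchings of $K_n$ that restrict to a perfect matching of $T$ yields $\Pr(c_i = 0) \le (2d/n)^{d/2}$. Since the rows of $Q_n$ are independent under the representation $Q_n = f(\boldsymbol{\sigma}, \boldsymbol{\xi})$,
\[
  \Pr(Q_n \boldsymbol{v} = 0) \le \big(\tfrac{1}{2} + (2d/n)^{d/2}\big)^n \le 2^{-n} \exp\!\big(2n(2d/n)^{d/2}\big) \le C \cdot 2^{-n}
\]
for an absolute constant $C$, since the exponent is maximized at $d = 2$ among $d \ge 2$ (and vanishes for $d$ odd).

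Finally, the number of candidates with $|T| = d$ is at most $p \binom{n}{d}(p-1)^d$, and summing over $1 \le d \le n^{0.8}$ with $p \le 2^{n^{0.1}}$ yields at most $2^{O(n^{0.9})}$ candidates in total. The union bound therefore gives $\Pr(\mathcal{S}^c) \le 2^{O(n^{0.9})} \cdot C \cdot 2^{-n} \ll 2^{-n/100}$. The main subtlety lies in showing that the correction $\Pr(c_i = 0)$ is small enough relative to $1/2$ to survive raising to the $n$-th power; this is precisely what the matching-counting estimate provides, leveraging the fact that a small exceptional set is very unlikely to be matched entirely within itself in a uniformly random perfect matching of $K_n$.
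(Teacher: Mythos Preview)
Your argument is correct and follows the same overall scheme as the paper: union bound over the at most $2^{O(n^{0.9})}$ candidate vectors, dispose of constant vectors using $p > n/2$, and for each remaining $\boldsymbol{v}$ bound $\Pr(W_i \cdot \boldsymbol{v} = 0)$ via the matching representation. The one substantive difference is in how you control $\Pr(\boldsymbol{v}_{\sigma_i} = \boldsymbol{0})$. The paper invokes \cref{lemma:random-matching} with $A$ the large level set and $B$ its complement, obtaining only the crude bound $\Pr(\boldsymbol{v}_{\sigma_i} = \boldsymbol{0}) < 0.98$ and hence $\Pr(Q_n\boldsymbol{v} = \boldsymbol{0}) \le 0.99^n$. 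You instead observe that $\boldsymbol{v}_{\sigma_i} = \boldsymbol{0}$ forces the exceptional set $T$ to be matched internally and bound this probability by the exact ratio of double factorials, yielding $(2d/n)^{d/2}$ and thus $\Pr(Q_n\boldsymbol{v} = \boldsymbol{0}) \lesssim 2^{-n}$. Your estimate is sharper and self-contained (it does not rely on \cref{lemma:random-matching}), while the paper's route has the virtue of reusing a lemma that is needed elsewhere in the argument anyway; either bound comfortably beats the $2^{O(n^{0.9})}$ count of candidates.
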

\begin{proof}
  Let $\boldsymbol{L}$ denote the set of all $\boldsymbol{v} \in \F_p^n \setminus \{\boldsymbol{0}\}$ with $L(\boldsymbol{v}) \ge n-n^{0.8}$ and note that
  \[
    |\boldsymbol{L}| \le \binom{n}{n^{0.8}} \cdot p^{n^{0.8}+1} \le n^{n^{0.8}} p^{n^{0.8}+1} \lesssim 2^{2n^{0.9}},
  \]
  as $n \ll p$ and $p \le 2^{n^{0.1}}$. Therefore, the assertion of the proposition will follow from a simple union bound if we show that
  \[
    \sup_{\boldsymbol{v} \in \boldsymbol{L}} \Pr(Q_n \boldsymbol{v} = \boldsymbol{0}) \le 0.99^n.
  \]
  
  Fix an arbitrary $\boldsymbol{v} \in \boldsymbol{L}$. If $L(\boldsymbol{v}) = n$, then the supremum above is zero as the assumption that $p > n/2$ implies that $Q_n \boldsymbol{v}$ is a nonzero multiple of the all-ones vector, so we may assume that $n - n^{0.8} \le L(\boldsymbol{v}) < n$. Consider the representation of $Q_n$ as $(\boldsymbol{\sigma}, \boldsymbol{\xi})$ described in the previous subsection and fix an $i \in [n]$. Recall from~\cref{eq:pairanticoncentration} that $\Pr(W_i \cdot \boldsymbol{v} = 0 \mid \sigma_i) \le \rho_{\F_p}(\boldsymbol{v}_{\sigma_i})$. Since $\rho_{\F_p}(\boldsymbol{v}_{\sigma_i}) \le 1/2$ as long as $\boldsymbol{v}_{\sigma_i} \neq \boldsymbol{0}$, we have
  \[
    \Pr(W_i \cdot \boldsymbol{v} = 0) \le 1 - \Pr(\boldsymbol{v}_{\sigma_i} \neq \boldsymbol{0})/2.
  \]
  Let $x \in \F_p$ be the unique element for which the set $A = \{i \in [n] : v_i = x\}$ has $L(\boldsymbol{v})$ elements and let $B = [n] \setminus A$. Since $|\supp(\boldsymbol{v}_{\sigma_i})|$ is at least as large as the number of edges of the matching induced by $\sigma_i$ that have exactly one endpoint in $A$, it follows from \cref{lemma:random-matching} that
  \[
    \Pr(\boldsymbol{v}_{\sigma_i} = \boldsymbol{0}) \le \exp\left(-\frac{L(\boldsymbol{v})(n-L(\boldsymbol{v}))}{32n}\right) \le e^{-1/33} < 0.98,
  \]
  which implies that
  \[
    \Pr(Q_n \boldsymbol{v} = \boldsymbol{0}) = \prod_{i=1}^n \Pr(W_i \cdot \boldsymbol{v} = 0) \le 0.99^n,
  \]
  as claimed.
\end{proof}

\subsection{Expanding base permutations}
In this subsection, we define the subset $\EE_n \subseteq (\Sigma_n)^{n}$ mentioned in the previous subsection, and show that a uniformly random $\boldsymbol{\sigma}$ belongs to this subset with very high probability. We say that $\boldsymbol{\sigma}:=(\sigma_1,\dots,\sigma_n) \in (\Sigma_n)^{n}$ belongs to $\EE_n$ if it satisfies the following two properties:
\begin{enumerate}[{label=(Q\arabic*)}]
\item
  \label{item:Q1}
  The union of any two perfect matchings of the form $\sigma_i$ and $\sigma_j$ ($i\neq j$) has at most $n^{0.6}$ connected components.  
\item
  \label{item:Q2}
  For any two disjoint subsets $A,B\subseteq [n]$ such that $n^{0.8} \leq |A|,|B|\leq n/2$, there are at most $\sqrt{n}/2$ indices $i\in [n]$ such that the perfect matching induced by $\sigma_i$ has fewer than $|A||B|/(8n)$ edges between $A$ and $B$.   
\end{enumerate}

\begin{proposition}
\label{prop:expanding-base-perm-whp}
Let $\boldsymbol{\sigma}$ be a uniformly random element of $(\Sigma_n)^{n}$. Then, 
\[
  \Pr(\boldsymbol{\sigma} \not\in \EE_n) \lesssim 2^{-\sqrt{n}/3}.
\]
\end{proposition}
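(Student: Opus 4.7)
The plan is to bound $\Pr(\boldsymbol{\sigma}\notin \EE_n)$ by a union bound that splits into the failure modes of \ref{item:Q1} and \ref{item:Q2}, leveraging two elementary observations: (a) the perfect matching induced by a uniformly random permutation in $\Sigma_n$ is itself a uniformly random perfect matching of $K_n$ (each matching is the image of exactly $(n/2)!\cdot 2^{n/2}$ permutations), and (b) the permutations $\sigma_1,\dots,\sigma_n$ are mutually independent. These facts allow me to invoke \cref{lemma:random-matching} and \cref{lemma:cycles-perm} cleanly on each coordinate.

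For \ref{item:Q1}, I fix an ordered pair $i\neq j$ and condition on $\sigma_j$. The matching induced by $\sigma_i$ is uniform and independent of $\sigma_j$, so \cref{lemma:cycles-perm} gives that the union of the two induced matchings has more than $2\sqrt{n}$ connected components with probability at most $2^{-\sqrt{n}/2}$. Since $2\sqrt{n}\le n^{0.6}$ for all sufficiently large $n$, a union bound over the at most $n^2$ pairs shows that \ref{item:Q1} fails with probability at most $n^2\cdot 2^{-\sqrt{n}/2}$, which is clearly $O(2^{-\sqrt{n}/3})$.

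For \ref{item:Q2}, I fix an ordered pair of disjoint subsets $A,B\subseteq[n]$ with $n^{0.8}\le |A|,|B|\le n/2$. Because the $\sigma_i$ are independent, the number of indices $i$ for which the matching induced by $\sigma_i$ contains fewer than $|A||B|/(8n)$ edges between $A$ and $B$ is stochastically dominated by a $\mathrm{Bin}(n,q)$ random variable, where \cref{lemma:random-matching} gives
\[
  q \le \exp\!\big(-|A||B|/(32n)\big) \le \exp(-n^{0.6}/32).
\]
The standard binomial tail bound then yields
\[
  \Pr\!\big(\mathrm{Bin}(n,q)\ge \sqrt{n}/2\big)
  \le \binom{n}{\lceil \sqrt{n}/2\rceil} q^{\sqrt{n}/2}
  \le n^{\sqrt{n}/2}\exp(-n^{1.1}/64)
  \le \exp(-n^{1.1}/128)
\]
for large $n$. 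A union bound over the at most $3^n$ ordered pairs of disjoint subsets (each element is placed in $A$, in $B$, or in neither) shows that \ref{item:Q2} fails with probability at most $3^n\exp(-n^{1.1}/128)$, which is $O(\exp(-n))$ and hence much smaller than $2^{-\sqrt{n}/3}$.

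Combining the two contributions gives $\Pr(\boldsymbol{\sigma}\notin \EE_n)\lesssim 2^{-\sqrt{n}/3}$, as required. The only step that requires some care is calibrating the parameters in \ref{item:Q2}: one needs $|A|,|B|\ge n^{0.8}$ so that $q$ is small enough to beat both the $n^{\sqrt{n}/2}$ factor from the binomial tail and the $3^n$ factor from the outer union bound. This is precisely the regime guaranteed by the definition of $\EE_n$, so no additional argument is needed beyond plugging in the bounds.
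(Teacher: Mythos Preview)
Your proof is correct and follows essentially the same approach as the paper's: a union bound separating the failure of \ref{item:Q1} and \ref{item:Q2}, handling the former via \cref{lemma:cycles-perm} on each pair $(i,j)$ and the latter via \cref{lemma:random-matching} on each coordinate followed by a binomial tail bound and a union over pairs $(A,B)$. The only cosmetic differences are that you count disjoint pairs by $3^n$ rather than the paper's $2^{2n}$, and you (correctly) obtain $q\le\exp(-n^{0.6}/32)$ from $|A|,|B|\ge n^{0.8}$, whereas the paper writes $\exp(-n^{0.8}/32)$; either exponent suffices for the conclusion.
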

\begin{proof}
  Since the coordinates of $\boldsymbol{\sigma}$ are independent, it follows from \cref{lemma:cycles-perm} and the union bound that~\ref{item:Q1} fails with probability at most $\binom{n}{2} e^{-\sqrt{n}/2}$. \cref{lemma:random-matching} implies that for every pair $A$ and $B$ and every $i \in [n]$, the probability that $\sigma_i$ has fewer than $|A||B|/(8n)$ edges between $A$ and $B$ is at most $\exp(-n^{0.8}/32)$. Since $\sigma_1, \dotsc, \sigma_n$ are independent, then
  \[
    \Pr\big(\text{\ref{item:Q2} fails to hold}\big) \leq 2^{2n} \cdot \binom{n}{\sqrt{n}/2} \cdot \exp\left(-\frac{n^{0.8}}{64}\right)^{\sqrt{n}/2} \lesssim \exp\left(-\frac{n^{1.3}}{100}\right).
  \]
This completes the proof. 
\end{proof}

\subsection{Bounding $\Pr(\mathcal{B}_\rho)$ for small $\rho$}

Throughout this subsection, we will consider a fixed $\boldsymbol{\sigma}=(\sigma_1,\dots,\sigma_n)\in \EE_n$. Recall from the proof outline that
\[
  \boldsymbol{V} = \big\{\boldsymbol{v} \in \F_p^n : L(\boldsymbol{v}) < n - n^{0.8}\big\}
\]
and that $i_0$ is an index that attains the maximum in
\[
  \max_{i\in [n]}\left(\E \left[\inf_{\boldsymbol{v} \in S_i^{\perp}\cap \boldsymbol{V}}\Pr\left(W_i\cdot \boldsymbol{v} =0 \mid S_i \right) \right]\right).
\]

Fix a $\boldsymbol{v} \in \boldsymbol{V}$. Recall that for every $i \in [n]$, we defined the $n/2$-dimensional vector $\boldsymbol{v}_{\sigma_i}$ to be the vector whose coordinates are  $(v_{\sigma_i(2k-1)}-v_{\sigma_i(2k)})/2$. Let $T_{\boldsymbol{v}}$ denote the set of all coordinates $i \in  [n] \setminus \{i_0\}$ such that $|\supp(\boldsymbol{v}_{\sigma_i})| \ge n^{0.8}/16$. We claim that $|T_{\boldsymbol{v}}| \ge n - \sqrt{n}$. To see this, note first that the assumption that $L(\boldsymbol{v}) < n-n^{0.8}$ implies that there are disjoint sets $A_{\boldsymbol{v}}, B_{\boldsymbol{v}} \subseteq [n]$ such that $|A_{\boldsymbol{v}}| = n^{0.8}$, $|B_{\boldsymbol{v}}| = n/2$, and $v_i \neq v_j$ for all $i \in A_{\boldsymbol{v}}$ and $j \in B_{\boldsymbol{v}}$. Property~\ref{item:Q2} from the definition of $\EE_n$ implies that for all but at most $\sqrt{n}/2$ indices $i \in [n] \setminus \{i_0\}$, the perfect matching induced by $\sigma_i$ has at least $n^{0.8}/16$ edges with one endpoint in each of $A_{\boldsymbol{v}}$ and $B_{\boldsymbol{v}}$. It is easy to see that each such index $i$ belongs to $T_{\boldsymbol{v}}$.

Recall the definitions of the set $\boldsymbol{H}_t$ of $t$-good vectors and of the goodness function $h$ given in \cref{sec:good-bad-vectors}. The following is an adaptation of \cref{def:Btell-rrd} to the context of expanding base permutations.

\begin{definition}
  \label{def:Bt-rrm}
  For any $t>0$, define the set $\boldsymbol{B}_t$ of \emph{$t$-bad} vectors by
  \[
    \boldsymbol{B}_t:= \big\{\boldsymbol{v}\in \boldsymbol{V} : |\{i \in T_{\boldsymbol{v}} : \boldsymbol{v}_{\sigma_i} \notin \boldsymbol{H}_{t}\}| \ge 2 \big\}.
  \]
  We say that a pair $(i_1, i_2)$ of distinct elements of of $T_{\boldsymbol{v}}$ \emph{witnesses} $\boldsymbol{v} \in \boldsymbol{B}_t$ if
  \[
    h(\boldsymbol{v}_{\sigma_{i_1}}) \ge h(\boldsymbol{v}_{\sigma_{i_2}}) \ge \max_{i \in T_{\boldsymbol{v}} \setminus \{i_0, i_1, i_2\}} h(\boldsymbol{v}_{\sigma_i}).
  \]
\end{definition}

For the remainder of this subsection, let $\rho = p^{-1/2}$. Recall that our goal is to bound 
\[
\sum_{\boldsymbol{v}\in\boldsymbol{V}:\rho_{\F_{p}}(\boldsymbol{v}_{\sigma_{i_{0}}})\geq\rho}\prod_{i\in[n]\setminus \{i_{0}\}}\rho_{\F_{p}}(\boldsymbol{v}_{\sigma_{i}}).
\]
We begin by bounding the contribution to the above sum of vectors $\boldsymbol{v}$ that are $n$-bad.
\begin{lemma} 
  \label{lem:not-bad-wrt-2}
  If $t\geq n$, then
  \[
    \sum_{\boldsymbol{v}\in \boldsymbol{B}_t}\prod_{i\in[n]\setminus \{ i_0\}}\rho_{\F_{p}}(\boldsymbol{v}_{\sigma_{i}}) \lesssim n^{-n/20}.
  \]
\end{lemma}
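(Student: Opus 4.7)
The plan is to follow the scheme of the proof of \cref{lemma:rrd-not-bad-wrt-many}, with property \ref{item:Q1} playing the role of the expansion property of the base matrix. Fix a pair of distinct indices $i_1,i_2 \in [n]\setminus\{i_0\}$ and an integer $m$ with $0 \le m \le \log_2 p \le n^{0.1}$, and let $\boldsymbol{C}_m$ be the set of $\boldsymbol{v}\in \boldsymbol{B}_t$ that are witnessed by $(i_1,i_2)$ and satisfy $h(\boldsymbol{v}_{\sigma_{i_2}}) \in (2^mt,2^{m+1}t]$ (so in particular $i_1,i_2 \in T_{\boldsymbol{v}}$). Since $h \le p$ for any vector with sufficiently large support, there are only $O(n^{0.1})$ relevant values of $m$ and at most $n^2$ pairs, so it suffices to prove
\[
  \sum_{\boldsymbol{v}\in \boldsymbol{C}_m}\prod_{i\in [n]\setminus\{i_0\}}\rho_{\F_p}(\boldsymbol{v}_{\sigma_i}) \;\lesssim\; n^{-n/18}.
\]

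The main counting step is the bound on $|\boldsymbol{C}_m|$. For any $\boldsymbol{v}\in \boldsymbol{C}_m$, the witnessing condition gives $h(\boldsymbol{v}_{\sigma_{i_1}}) \ge h(\boldsymbol{v}_{\sigma_{i_2}}) > 2^m t$, so both $\boldsymbol{v}_{\sigma_{i_1}}$ and $\boldsymbol{v}_{\sigma_{i_2}}$ lie in $\F_p^{n/2}\setminus \boldsymbol{H}_{2^m t}$; moreover, since $i_1,i_2\in T_{\boldsymbol{v}}$, each of these half-sized vectors has support at least $n^{0.8}/16 \gg (n/2)^{1/4}$. Applying \cref{lemma:counting-bad} with $n/2$ in place of $n$ and $\alpha = 1/2$, the number of candidates for each of $\boldsymbol{v}_{\sigma_{i_1}}$ and $\boldsymbol{v}_{\sigma_{i_2}}$ is at most $(4p/(2^m t))^{n/2}\,(2^m t)^{(n/2)^{1/4}}$. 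The key observation is then that once these two vectors are fixed, we know $v_a - v_b$ for every edge $\{a,b\}$ of the matchings induced by $\sigma_{i_1}$ and $\sigma_{i_2}$, so $\boldsymbol{v}$ is determined up to a free additive constant on each connected component of the graph $\sigma_{i_1} \cup \sigma_{i_2}$. By property \ref{item:Q1} this graph has at most $n^{0.6}$ components, so each pair $(\boldsymbol{v}_{\sigma_{i_1}},\boldsymbol{v}_{\sigma_{i_2}})$ lifts to at most $p^{n^{0.6}}$ vectors $\boldsymbol{v}$, giving
\[
  |\boldsymbol{C}_m| \;\le\; p^{n^{0.6}} \cdot \left(\frac{4p}{2^m t}\right)^{n} (2^m t)^{2(n/2)^{1/4}}.
\]

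For the product, the witnessing condition forces $h(\boldsymbol{v}_{\sigma_i}) \le 2^{m+1}t$ for every $i \in T_{\boldsymbol{v}}\setminus\{i_0,i_1,i_2\}$, and our choice of $k,p$ together with $2^{m+1}t \ge n \ge (n/2)^{1/4}$ makes \cref{lem:smallballdreg} applicable, yielding $\rho_{\F_p}(\boldsymbol{v}_{\sigma_i}) \le C \cdot 2^{m+1} t / (p\, n^{1/16})$ for some absolute constant $C$. Since $|T_{\boldsymbol{v}}| \ge n-\sqrt n$, the trivial bound $\rho_{\F_p}\le 1$ on the at most $\sqrt n$ remaining indices gives
\[
  \prod_{i\in[n]\setminus\{i_0\}}\rho_{\F_p}(\boldsymbol{v}_{\sigma_i}) \;\le\; \left(\frac{2^{m+1}C t}{p\, n^{1/16}}\right)^{n-\sqrt n-2}.
\]
Multiplying, the $2^m t$ and $p$ powers nearly cancel: the residual contributions amount to $(8C)^n \cdot p^{n^{0.6}+O(\sqrt n)} \cdot (2^m t)^{O(\sqrt n)} \le 2^{O(n^{0.7})}$ (using $p \le 2^{n^{0.1}}$ and $t \le p$), while the factor $n^{-(n-\sqrt n-2)/16}$ dominates and produces the desired $n^{-n/18}$ bound. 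The main obstacle, compared to \cref{lemma:rrd-not-bad-wrt-many}, is that only two witnessing indices are available rather than $\ell = n^{0.7}$, so one cannot carve out many disjoint coordinate blocks inside $\supp(\boldsymbol{v})$; instead, the global rigidity supplied by property \ref{item:Q1} converts two applications of \cref{lemma:counting-bad} on half-sized vectors into a count of full $\boldsymbol{v}$, at the very modest price of a $p^{n^{0.6}}$ factor that is dwarfed by $n^{-n/16}$.
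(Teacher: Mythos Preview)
Your proposal is correct and follows essentially the same approach as the paper: fix a witnessing pair $(i_1,i_2)$, stratify by $m$ with $h(\boldsymbol{v}_{\sigma_{i_2}})\in(2^mt,2^{m+1}t]$, use \cref{lemma:counting-bad} on each of $\boldsymbol{v}_{\sigma_{i_1}},\boldsymbol{v}_{\sigma_{i_2}}$, and recover $\boldsymbol{v}$ via property~\ref{item:Q1} at cost $p^{n^{0.6}}$, then bound the product using \cref{lem:smallballdreg} on $T_{\boldsymbol{v}}\setminus\{i_0,i_1,i_2\}$. The only blemish is the sentence claiming the residuals are $\le 2^{O(n^{0.7})}$: the factor $(8C)^n$ is of order $2^{\Theta(n)}$, not $2^{O(n^{0.7})}$, but as you immediately note, $n^{-(n-\sqrt n-2)/16}$ still dominates it, so the conclusion $n^{-n/18}$ stands.
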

\begin{proof}
  It is enough to show that the contribution to the above sum of $\boldsymbol{v} \in \boldsymbol{B}_t$ that are witnessed by a given pair $(i_1, i_2)$ of distinct indices in $[n]\setminus \{i_0\}$ is $O(n^{-n/20-2})$ and then take the union bound over all such pairs. Let us now fix such a pair for the remainder of the proof.  Given an integer $m$, let $\boldsymbol{C}_m$ be the set of all vectors $\boldsymbol{v} \in \boldsymbol{B}_t$ that are witnessed by our pair and for which $2^mt < h(\boldsymbol{v}_{\sigma_{i_2}}) \le 2^{m+1}t$. Since $h(\boldsymbol{a}) \le p$ for every vector $\boldsymbol{a}$ with $|\supp(\boldsymbol{a})| \ge |\boldsymbol{a}|^{1/4}$, then the set $\boldsymbol{C}_m$ is empty unless $t \le 2^m t \le p$ and hence $0 \le m \le \log_2 p \le n^{0.1}$.

  Fix such an $m$ and suppose that $\boldsymbol{v} \in \boldsymbol{C}_m$. It follows from the definition of a witnessing sequence that $h(\boldsymbol{v}_{\sigma_{i_1}}) \ge h(\boldsymbol{v}_{\sigma_{i_2}})$ and hence neither $\boldsymbol{v}_{\sigma_{i_1}}$ nor $\boldsymbol{v}_{\sigma_{i_2}}$ belong to $\boldsymbol{H}_{2^mt}$. It thus follows from \cref{lemma:counting-bad} that both the vectors $\boldsymbol{v}_{\sigma_{i_1}}$ and $\boldsymbol{v}_{\sigma_{i_2}}$ belong to a set of size at most $(4p)^{n/2} (2^mt)^{-n/2+n^{1/4}}$.  We next bound the number of vectors $\boldsymbol{v} \in \boldsymbol{V}$ with a given value of $(\boldsymbol{v}_{\sigma_{i_{1}}},\boldsymbol{v}_{\sigma_{i_{2}}})$. Note that all such vectors $\boldsymbol{v}$ have the same differences between all those pairs of coordinates that are connected by an edge in the union of the matchings induced by $\sigma_{i_1}$ and $\sigma_{i_2}$. In particular, the vector $\boldsymbol{v}$ is uniquely determined once we fix the value of a single coordinate in each conencted component of this graph. Since property~\ref{item:Q1} from the definition of $\EE_n$ implies that the number of connected components does not exceed $n^{0.6}$, we may conclude that
  \[
    |\boldsymbol{C}_m| \le p^{n^{0.6}} \cdot \left((4p)^{n/2} (2^mt)^{-n/2+n^{1/4}}\right)^2 \le \left(\frac{4p}{2^mt}\right)^{n} \cdot (2^mtp)^{n^{0.6}} \lesssim \left(\frac{4p}{2^mt}\right)^{n} \cdot p^{2n^{0.6}}.
  \]
 
  On the other hand, it follows from the definition of a witnessing sequence that, for each $\boldsymbol{v} \in \boldsymbol{C}_m$ and every $i \in T_{\boldsymbol{v}} \setminus \{i_0, i_1, i_2\}$, we have $h(\boldsymbol{v}_{\sigma_i}) \le h(\boldsymbol{v}_{\sigma_{i_2}})$ and hence $\boldsymbol{v}_{\sigma_i} \in \boldsymbol{H}_{2^{m+1}t}$. Consequently, \cref{lem:smallballdreg} implies that $\rho_{\F_p}(\boldsymbol{v}_{\sigma_{i}}) \leq 2^{m+1}Ct/(pn^{1/16})$ for some absolute constant $C$. In particular, every $\boldsymbol{v} \in \boldsymbol{C}_m$ satisfies
  \[
    \prod_{i \in [n] \setminus \{i_0\}} \rho_{\F_p}(\boldsymbol{v}_{\sigma_i}) \le \prod_{i \in T_{\boldsymbol{v}} \setminus \{i_0, i_1, i_2\}} \rho_{\F_p}(\boldsymbol{v}_{\sigma_i}) \le \left(\frac{2^{m+1}Ct}{pn^{1/16}}\right)^{|T_{\boldsymbol{v}}| - 2} \le \left(\frac{2^{m+1}Ct}{pn^{1/16}}\right)^{n-3\sqrt{n}}.
  \]

  Putting everything together, we see that
  \begin{align*}
    \sum_{\boldsymbol{v} \in\boldsymbol{C}_m} \prod_{i \in [n]\setminus \{i_0\}}\rho_{\F_p}(\boldsymbol{v}_{\sigma_i}) & \lesssim \left(\frac{4p}{2^mt}\right)^{n} \cdot p^{2n^{0.6}} \cdot \left(\frac{2^{m+1}Ct}{pn^{1/16}}\right)^{n-3\sqrt{n}} \\
                                                                                                                 & \lesssim (8C)^n \cdot p^{2n^{0.6}}\cdot n^{-n/17} \\
                                                                                                                 & \lesssim (8C)^n \cdot 2^{2n^{0.7}} \cdot n^{-n/17} \lesssim n^{-n/18},
  \end{align*}
  where the penultimate inequality holds because $p \le 2^{n^{0.1}}$. Since there are at most $n^{0.1} + 1$ relevant values of $m$ and $n^2$ pairs $i_1, i_2$, the claimed upper bound follows.
\end{proof}

The next lemma bounds the number of vectors $\boldsymbol{v}$ for which $\rho(\boldsymbol{v}_{\sigma_{i_0}})$ has large atom probability.

\begin{lemma}
  \label{lemma:few-bad-wrt-1}
  The number of vectors $\boldsymbol{v}\in \F_p^{n}$ for which $\rho_{\F_p}(\boldsymbol{v}_{\sigma_{i_0}}) \ge \rho$ is $O(p^{0.77n})$.
\end{lemma}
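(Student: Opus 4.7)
The plan is to mirror the two-case split used in the proof of \cref{lemma:rrd-number-bad-wrt-1}, partitioning the relevant $\boldsymbol{v}$ into $\boldsymbol{V_{\text{small}}} := \{\boldsymbol{v} \in \F_p^n : |\supp(\boldsymbol{v}_{\sigma_{i_0}})| \le n^{0.8}\}$ and $\boldsymbol{V_{\text{large}}} := \{\boldsymbol{v} \in \F_p^n : |\supp(\boldsymbol{v}_{\sigma_{i_0}})| > n^{0.8} \text{ and } \rho_{\F_p}(\boldsymbol{v}_{\sigma_{i_0}}) \ge \rho\}$. The central linear-algebraic observation that is not needed in the signed r.r.d.\ case is that the map $\phi \colon \F_p^n \to \F_p^{n/2}$ defined by $\phi(\boldsymbol{v}) = \boldsymbol{v}_{\sigma_{i_0}}$ is a surjective $\F_p$-linear map (using that $p$ is odd so division by $2$ makes sense) whose kernel consists of those $\boldsymbol{v}$ with $v_{\sigma_{i_0}(2k-1)} = v_{\sigma_{i_0}(2k)}$ for every $k \in [n/2]$ and therefore has dimension exactly $n/2$. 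Consequently, for any $T \subseteq \F_p^{n/2}$, the preimage $\phi^{-1}(T)$ has cardinality $p^{n/2} \cdot |T|$. This reduces the problem to counting $(n/2)$-dimensional vectors in $\F_p^{n/2}$, which is the natural setting for \cref{lemma:counting-bad}.

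For $\boldsymbol{V_{\text{small}}}$, the number of vectors $\boldsymbol{a} \in \F_p^{n/2}$ with $|\supp(\boldsymbol{a})| \le n^{0.8}$ is at most $\binom{n/2}{n^{0.8}} p^{n^{0.8}} \le n^{n^{0.8}} p^{n^{0.8}}$, so the fiber-size identity gives $|\boldsymbol{V_{\text{small}}}| \le p^{n/2} \cdot n^{n^{0.8}} p^{n^{0.8}} \le p^{n/2 + o(n)}$, comfortably smaller than $p^{0.77n}$. For $\boldsymbol{V_{\text{large}}}$, observe that the parameters satisfy the hypotheses of \cref{lem:smallballdreg} when applied to $\boldsymbol{v}_{\sigma_{i_0}} \in \F_p^{n/2}$: we have $|\supp(\boldsymbol{v}_{\sigma_{i_0}})| > n^{0.8} \ge (n/2)^{1/4}$, $k = n^{1/8}/2 \le (n/2)^{1/8}$, and $p \le 2^{n^{0.1}} \le 2^{k/100}$ for $n$ sufficiently large. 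Hence, setting $t_0 := p^{1/2} (n/2)^{1/16}/(2C)$, where $C = C(1/2)$ is the constant from \cref{lem:smallballdreg}, we see that $\boldsymbol{v}_{\sigma_{i_0}} \in \boldsymbol{H}_{t_0}$ would imply $\rho_{\F_p}(\boldsymbol{v}_{\sigma_{i_0}}) \le Ct_0/(p (n/2)^{1/16}) = (2p^{1/2})^{-1} < \rho$, contradicting $\boldsymbol{v} \in \boldsymbol{V_{\text{large}}}$. Therefore $\boldsymbol{v}_{\sigma_{i_0}} \notin \boldsymbol{H}_{t_0}$, and since $t_0 \gg n/2$, \cref{lemma:counting-bad} (applied with dimension $n/2$ and $\alpha = 1/2$) bounds the number of such $\boldsymbol{v}_{\sigma_{i_0}}$ by
\[
  (4p/t_0)^{n/2} \cdot t_0^{(n/2)^{1/4}} \le (8Cp^{1/2})^{n/2} \cdot (p^{1/2}(n/2)^{1/16})^{(n/2)^{1/4}} \lesssim p^{n/4 + o(n)}.
\]
Multiplying by the fiber size yields $|\boldsymbol{V_{\text{large}}}| \lesssim p^{3n/4 + o(n)} \le p^{0.76n}$ for large $n$.

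Summing the two contributions gives the desired $O(p^{0.77n})$ bound. There is no real obstacle; the proof is essentially a routine adaptation of \cref{lemma:rrd-number-bad-wrt-1}, with the only conceptual new ingredient being the fiber-size observation that lets us pass from counting in $\F_p^n$ to counting in $\F_p^{n/2}$ -- without this reduction, applying \cref{lemma:counting-bad} directly would not yield a sub-exponentially-wasteful bound. The slightly weaker exponent $0.77n$ (versus $(1-r/4)n$ in the signed r.r.d.\ case) reflects exactly the $p^{n/2}$ fiber loss inherent to this linearization.
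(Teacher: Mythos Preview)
Your proof is correct and follows essentially the same approach as the paper: the same small/large split on $|\supp(\boldsymbol{v}_{\sigma_{i_0}})|$, the same fiber-size observation that each value of $\boldsymbol{v}_{\sigma_{i_0}}$ has exactly $p^{n/2}$ preimages, and the same use of \cref{lem:smallballdreg} and \cref{lemma:counting-bad} on $\boldsymbol{V_{\text{large}}}$. The only cosmetic difference is your choice of threshold $t_0 = p^{1/2}(n/2)^{1/16}/(2C)$ versus the paper's $\rho p = p^{1/2}$, which is immaterial to the final bound.
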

\begin{proof}
  We partition the set of relevant vectors $\boldsymbol{v}$ into two parts depending on the size of the support of $\boldsymbol{v}_{\sigma_{i_0}}$. More precisely,
  \begin{align*}
    \boldsymbol{V_{\text{small}}} & := \{\boldsymbol{v} \in \F_p^n : |\supp(\boldsymbol{v}_{\sigma_{i_0}})| \le n^{0.8}\}, \\
    \boldsymbol{V_{\text{large}}} & := \{\boldsymbol{v} \in \F_p^n : |\supp(\boldsymbol{v}_{\sigma_{i_0}})| > n^{0.8} \text{ and } \rho_{\F_p}(\boldsymbol{v}_{\sigma_{i_0}}) \ge \rho\}.
  \end{align*}
  Note first that the number of vectors $\boldsymbol{a} \in \F_p^{n/2}$ with $|\supp(\boldsymbol{a})| \le n^{0.8}$ is at most
  \[
    \binom{n/2}{n^{0.8}} \cdot p^{n^{0.8}} \le n^{n^{0.8}} \cdot p^{n^{0.8}} \lesssim p^{2n^{0.8}}
  \]
  Since $\sigma_{i_{0}}$ is fixed, then for every $\boldsymbol{a} \in \F_p^{n/2}$, there are exactly $p^{n/2}$ vectors $\boldsymbol{v} \in \F_p^n$ for which $\boldsymbol{v}_{\sigma_{i_0}} = \boldsymbol{a}$. It follows that
  \[
    |\boldsymbol{V_{\text{small}}}| \lesssim p^{n/2} \cdot p^{2n^{0.8}} \lesssim p^{2n/3}.
  \]
  Observe that if $\boldsymbol{v} \in \boldsymbol{V_{\text{large}}}$, then $\boldsymbol{v}_{\sigma_{i_0}} \not\in \boldsymbol{H}_{\rho p}$, as otherwise \cref{lem:smallballdreg} would imply that $\rho_{\F_p}(\boldsymbol{v}_{\sigma_{i_0}}) \leq C\rho/n^{1/16}$ for some absolute constant $C$, contradicting the assumption that $\boldsymbol{v} \in \boldsymbol{V_{\text{large}}}$. In particular, \cref{lemma:counting-bad} implies that the vector $\boldsymbol{v}_{\sigma_{i_0}}$ belongs to a set of size $O((4p)^{n/2} \cdot (\rho p)^{-n/2+n^{1/4}})$. Recalling that $\rho = p^{-1/2}$, we obtain
  \[
    |\boldsymbol{V_{\text{large}}}| \lesssim p^{n/2} \cdot (4p)^{n/2} \cdot (\rho p)^{-n/2+n^{1/4}} = 2^n \cdot p^n \cdot p^{-n/4+n^{1/4}/2} \lesssim p^{0.76n},
  \]
  where the last inequality holds as $p \gg 1$. We obtain the desired conclusion by summing the obtained upper bounds on $|\boldsymbol{V_{\text{small}}}|$ and $|\boldsymbol{V_{\text{large}}}|$.
\end{proof}

We now combine \cref{lem:smallballdreg,lem:not-bad-wrt-2,lemma:few-bad-wrt-1} to derive the main result of this subsection.

\begin{proposition}
  \label{prop:rrm-bounding-Brho}
  We have
  \[
    \Pr(\mathcal{B}_{\rho}) \lesssim n^{-n/20} + p^{-n/5}.
  \]
\end{proposition}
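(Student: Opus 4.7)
The plan is to follow the same template as the proof of \cref{prop:rrd-bounding-Brho}, splitting the sum in the union bound from the overview according to whether the vector $\boldsymbol{v}$ is $n$-bad or not:
\[
  \Pr(\mathcal{B}_\rho) \le \sum_{\boldsymbol{v} \in \boldsymbol{B}_n} \prod_{i \in [n] \setminus \{i_0\}} \rho_{\F_p}(\boldsymbol{v}_{\sigma_i}) + \sum_{\substack{\boldsymbol{v} \in \boldsymbol{V} \setminus \boldsymbol{B}_n \\ \rho_{\F_p}(\boldsymbol{v}_{\sigma_{i_0}}) \ge \rho}} \prod_{i \in [n] \setminus \{i_0\}} \rho_{\F_p}(\boldsymbol{v}_{\sigma_i}).
\]
The first sum is bounded by $O(n^{-n/20})$ by a direct application of \cref{lem:not-bad-wrt-2} with $t = n$.

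For the second sum, I would exploit the definition of $\boldsymbol{B}_n$: since $\boldsymbol{v} \notin \boldsymbol{B}_n$, there is at most one index $i \in T_{\boldsymbol{v}}$ with $\boldsymbol{v}_{\sigma_i} \notin \boldsymbol{H}_n$, so for at least $|T_{\boldsymbol{v}}| - 1 \ge n - \sqrt{n} - 1$ indices $i \in T_{\boldsymbol{v}}$ we have $\boldsymbol{v}_{\sigma_i} \in \boldsymbol{H}_n$. Since $k, p$ satisfy the hypotheses of \cref{lem:smallballdreg} and each such $\boldsymbol{v}_{\sigma_i} \in \F_p^{n/2}$ has support at least $n^{0.8}/16 \ge (n/2)^{1/4}$ (by the definition of $T_{\boldsymbol{v}}$) and $n \ge (n/2)^{1/4}$, \cref{lem:smallballdreg} yields $\rho_{\F_p}(\boldsymbol{v}_{\sigma_i}) \le Cn/(p \cdot (n/2)^{1/16}) \lesssim n/p$ for each such $i$. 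Consequently, every $\boldsymbol{v} \in \boldsymbol{V} \setminus \boldsymbol{B}_n$ satisfies
\[
  \prod_{i \in [n] \setminus \{i_0\}} \rho_{\F_p}(\boldsymbol{v}_{\sigma_i}) \le (n/p)^{n - \sqrt{n} - 1}.
\]

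To finish, I would invoke \cref{lemma:few-bad-wrt-1} to bound the number of $\boldsymbol{v}$ with $\rho_{\F_p}(\boldsymbol{v}_{\sigma_{i_0}}) \ge \rho$ by $O(p^{0.77n})$, giving
\[
  \sum_{\substack{\boldsymbol{v} \in \boldsymbol{V} \setminus \boldsymbol{B}_n \\ \rho_{\F_p}(\boldsymbol{v}_{\sigma_{i_0}}) \ge \rho}} \prod_{i \in [n] \setminus \{i_0\}} \rho_{\F_p}(\boldsymbol{v}_{\sigma_i}) \lesssim p^{0.77n} \cdot (n/p)^{n - \sqrt{n} - 1} = p^{-0.23n + O(\sqrt{n})} \cdot n^{n - O(\sqrt{n})}.
\]
Since $p \ge 2^{n^{0.1}}/2$, the factor $p^{0.03n} = 2^{\Omega(n^{1.1})}$ dominates $n^n = 2^{O(n \log n)}$, so this quantity is at most $p^{-0.2n + O(\sqrt{n})} \le p^{-n/5}$ for sufficiently large $n$. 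Combining the two contributions yields the claimed bound. The main obstacle is purely a bookkeeping one: keeping the support hypotheses of \cref{lem:smallballdreg} satisfied for the restricted vectors $\boldsymbol{v}_{\sigma_i}$ (which live in $\F_p^{n/2}$ rather than $\F_p^n$) and verifying the arithmetic that converts the polynomial loss from \cref{lemma:few-bad-wrt-1} into the promised $p^{-n/5}$ bound; no new conceptual input is needed beyond what was already developed for the r.r.d.\ case.
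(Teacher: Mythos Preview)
Your proposal is correct and follows essentially the same argument as the paper: the same split into $\boldsymbol{B}_n$ and its complement, the same invocations of \cref{lem:not-bad-wrt-2}, \cref{lem:smallballdreg}, and \cref{lemma:few-bad-wrt-1}, and the same final arithmetic exploiting $p \ge 2^{n^{0.1}}/2$. The only cosmetic difference is that you use $|T_{\boldsymbol{v}}|-1 \ge n-\sqrt{n}-1$ good indices (which is what the definition of $\boldsymbol{B}_n$ literally gives), whereas the paper uses the slightly weaker $|T_{\boldsymbol{v}}|-2 \ge n-2\sqrt{n}$; this makes no difference to the outcome.
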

\begin{proof}
  Recall from~\cref{eq:B-rho-union-bound} that
  \begin{equation}
    \label{eqn:rrm-boundB_rho}
    \Pr(\mathcal{B}_{\rho}) \le \sum_{\boldsymbol{v}\in \boldsymbol{B}_n}\prod_{i\in[n]\setminus \{i_0\}}\rho_{\F_{p}}(\boldsymbol{v}_{\sigma_i})+\sum_{\substack{\boldsymbol{v} \in\boldsymbol{V} \setminus \boldsymbol{B}_n \\ \rho_{\F_p}(\boldsymbol{v}_{\sigma_{i_0}}) \geq \rho}} \prod_{i\in[n]\setminus \{i_0\}}\rho_{\F_p}(\boldsymbol{v}_{\sigma_i}).
  \end{equation}
  \cref{lem:not-bad-wrt-2} states that the first term in the right-hand side of \cref{eqn:rrm-boundB_rho} is $O(n^{-n/20})$. In order to bound the second term, note that for any $\boldsymbol{v} \in \boldsymbol{V} \setminus \boldsymbol{B}_n$, there are at least $|T_{\boldsymbol{v}}|-2 \geq n-2\sqrt{n}$ indices $i \in [n]\setminus \{i_0\}$ for which $\boldsymbol{v}_{\sigma_i} \in \boldsymbol{H}_n$. \cref{lem:smallballdreg} implies that for each such index, $\rho_{\F_p}(\boldsymbol{v}_{\sigma_i}) \leq n/p$. In particular, if $\boldsymbol{v} \in \boldsymbol{V} \setminus \boldsymbol{B}_n$, then
  \[
    \prod_{i\in[n] \setminus \{i_0\}}\rho_{\F_p}(\boldsymbol{v}_{\sigma_i}) \leq (n/p)^{n-2\sqrt{n}}.
  \]
  Since \cref{lemma:few-bad-wrt-1} implies that
  \[
    |\{\boldsymbol{v}\in\boldsymbol{V}:\rho_{\F_{p}}(\boldsymbol{v}_{\sigma_{v_i}})\geq \rho\}|\leq p^{0.77n},
  \]
  it follows that the second term in \cref{eqn:rrm-boundB_rho} is bounded from above by 
  \[
    p^{0.77n} (n/p)^{n-2\sqrt{n}} \lesssim p^{-0.24n} \cdot p^{2\sqrt{n}} \cdot n^n \lesssim p^{-n/5},
  \]
  where the last inequality follows as $p \gg n$.
\end{proof}

\subsection{Proof of \cref{thm:singularity-row-regular}}

The main result of this section is now immediate. 

\begin{proof}[Proof of \cref{thm:singularity-row-regular}]
  Recall from \cref{sec:overview-row-reg} that for every positive $\rho$,
  \begin{equation}
    \label{eqn:bound-Q_n-sing}
    \Pr(Q_n\text{ is singular}) \leq \Pr(\mathcal{S}^c) + \Pr(\boldsymbol{\tau} \not\in \EE_n) + n \cdot \big( \Pr(\mathcal{B}_\rho) + \rho\big).
  \end{equation}
  We know from \cref{prop:eliminate-large-levelset} that $\Pr(\mathcal{S}^{c}) \lesssim 2^{-n/100}$, from \cref{prop:expanding-base-perm-whp} that $\Pr(\boldsymbol{\tau} \not\in \EE_n) \lesssim 2^{-\sqrt{n}/3} $, and from \cref{prop:rrm-bounding-Brho} that $\Pr(B_\rho) \lesssim n^{-n/20} +  p^{-n/5}$. Recalling that $\rho = p^{-1/2}$ and $p \ge 2^{n^{0.1}}/2$, we see that the right-hand side of \cref{eqn:bound-Q_n-sing} can be bounded from above by  $Cn \cdot 2^{-n^{0.1}/2}$ for some absolute constant $C$. This gives the desired conclusion. 
\end{proof}

\bibliographystyle{abbrv}
\bibliography{counting_inverseLO}

\appendix
\section{Proof of Hal\'asz's inequality over $\F_p$}
\label{app:halasz}
In this appendix, we prove \cref{thm:halasz-fp}. The proof follows Hal\'asz's original proof in~\cite{halasz1977estimates}. 
\begin{proof}[Proof of \cref{thm:halasz-fp}]
  Let $e_p$ be the canonical generator of the Pontryagin dual of $\F_p$, that is, the function $e_p \colon \F_p \to \C$ defined by $e_p(x) = \exp(2\pi i x / p)$. Recall the following discrete Fourier identity in $\F_p$:
  \[
    \delta_0(x) = \frac{1}{p} \sum_{r \in \F_p}e_p(rx),
  \]
  where $\delta_0(0) = 1$ and $\delta_0(x) = 0$ if $x \neq 0$. Let $\epsilon_1,\dotsc,\epsilon_n$ be i.i.d.\ Rademacher random variables. Note that for any $q \in \F_p$, 
  \begin{align*}
    \Pr\left(\sum_{j=1}^n \epsilon_j a_j = q\right) &= \E\left[\delta_0 \left(\sum_{j=1}^n \epsilon_j a_j - q\right)\right] \\
                                                    &= \E\left[ \frac{1}{p} \sum_{r\in \F_p} e_p\left(r \left(\sum_{j=1}^n \epsilon_j a_j - q\right)\right)\right] \\
                                                    & = \E\left[\frac{1}{p} \sum_{r \in \F_p} \prod_{j=1}^n e_p( \epsilon_j r a_j) e_p(-r q)\right] \\
                                                    & = \frac{1}{p} \sum_{r \in \F_p} e_p(-rq) \prod_{j=1}^n \E\big[e_p(\epsilon_j r a_j)\big].
  \end{align*}
  Since each $\epsilon_j$ is a Rademacher random variable, we have
  \[
    \E\big[e_p(\epsilon_jra_j)\big] = \exp(2\pi i ra_j/p)/2 + \exp(-2\pi i ra_j/p)/2 = \cos(2\pi r a_j/p).
  \]
  It thus follows from the triangle inequality that
  \begin{equation}
    \label{eq:Pr-cos-UB}
    \Pr\left(\sum_{j=1}^n \epsilon_j a_j = q\right) \le \frac{1}{p} \sum_{r\in \F_p} \prod_{j=1}^n \left|\cos(2 \pi r a_j/p)\right| = \frac{1}{p} \sum_{r \in \F_p} \prod_{j=1}^n \left|\cos\left(\pi r a_j/p\right) \right|,
  \end{equation}
  where the equality holds because the map $\F_p \ni r \mapsto 2r \in \F_p$ is a bijection (as $p$ is odd) and (since $x \mapsto |\cos(\pi x)|$ has period $1$ and it is therefore well defined for $x \in \R/\Z$) because $|\cos(2\pi x/p)| = |\cos(\pi (2x)/p)|$ for every $x \in \F_p$.

  Given a real number $y$, denote by $\|y\| \in [0,1/2]$ the distance between $y$ and a nearest integer. Let us record the useful inequality 
\[
  |\cos(\pi y)| \leq \exp\big(-\| y \|^2/2\big),
\]
which is valid for every real number $y$. Using this inequality to bound from above each of the $n$ terms in the right-hand side of~\cref{eq:Pr-cos-UB}, we arrive at
\begin{equation}
  \label{eqn:halasz-prelim}
  \max_{q\in \F_p}\Pr\left(\sum_{i=1}^n \epsilon_i a_i = q\right) \leq \frac{1}{p} \sum_{r\in \F_p} \exp\left(-\frac{1}{2} \sum_{i=1}^n\|r a_i/p  \|^2\right).
\end{equation}
Now, for each nonnegative real $t$, we define the following `level' set:
\[
  T_t := \left\{r \in \F_p : \sum_{j=1}^{n} \|r a_j/p  \|^2 \leq t \right\}.
\]
Since for every real $y$, we may write $e^{-y} = \int_0^\infty \mathds{1}[y \le t] e^{-t}\,dt$, then
\begin{equation}
  \label{eqn:halasz-integral}
  \sum_{r\in \F_p} \exp\left(-\frac{1}{2} \sum_{j=1}^{n} \|r a_j/p  \|^2\right) = \sum_{r \in \F_p} \int_0^\infty \mathds{1}\left[\sum_{j=1}^n \|r a_j / p \|^2 \le 2t\right] e^{-t} \, dt = \int_0^{\infty} |T_{2t}| e^{-t} \, dt.
\end{equation}
Since for every nonzero $a \in \F_p$, the map $\F_p \ni r \mapsto ra \in \F_p$ is bijective, we have
\begin{align*}
  \sum_{r \in \F_p} \sum_{j=1}^n \|r a_j/p \|^2 & = \sum_{j \in \supp(\boldsymbol{a})} \sum_{r \in \F_p} \|r a_j/p\|^2 = |\supp(\boldsymbol{a})| \sum_{r \in \F_p} \|r/p\|^2 \\
                                                & = |\supp(\boldsymbol{a})| \cdot 2\sum_{s=1}^{(p-1)/2} (s/p)^2 = |\supp(\boldsymbol{a})| \cdot \frac{p^2-1}{12p} > \frac{|\supp(\boldsymbol{a})| \cdot p}{15},
\end{align*}
where the inequality holds because $p \ge 3$ (as $p$ is an odd prime). On the other hand, it follows from the definition of $T_t$ that for every $t \ge 0$,
\[
  \sum_{r \in \F_p} \sum_{j=1}^n \|r a_j/p \|^2 \le |T_t| \cdot t + \big(p - |T_t|\big) \cdot n.
\]
This implies that $|T_t| < p$ as long as $t \le |\supp(\boldsymbol{a})|/15$.

Recall that the Cauchy--Davenport theorem states that every pair of nonempty $A, B \subseteq \F_p$ satisfies $|A+B| \ge \min\{ p, |A|+|B|-1\}$. It follows that for every positive integer $m$ and every $t \ge 0$, the iterated sumset $mT_t$ satifies $|mT_t| \ge \min\{ p, m|T_t|-m\}$. We claim that for every $m$, the iterated sumset $mT_t$ is contained in the set $T_{m^2 t}$ and thus
\[
    |T_{m^2t}| \ge \min\big\{p, m|T_t|-m\big\}.
\]
Indeed, for $r_1,\dots, r_m \in T_t$, it follows from the triangle inequality and the Cauchy--Schwarz inequality that 
\begin{align*}
  \sum_{j=1}^n \left\| \sum_{i=1}^{m} r_i a_j/ p \right\|^2 \leq \sum_{j=1}^{n} \left(\sum_{i=1}^{m} \left\|r_i a_j/p \right\|\right)^2  \leq \sum_{j=1}^{n} m \sum_{i=1}^{m} \left \|r_i a_j/p\right \|^2  \leq m^2 t.
\end{align*}
Since $|T_{m^2t}| < p$ as long as $m^2t \le |\supp(\boldsymbol{a})|/15$, we see that if $t \le 2M \le |\supp(\boldsymbol{a})|/15$, then, letting $m = \lfloor \sqrt{2M/t} \rfloor \ge 1$, we obtain
\begin{equation}
  \label{eqn:halasz-cd}
  |T_t| \le \frac{|T_{m^2t}|}{m}+1 \le \frac{\sqrt{2t} \cdot |T_{2M}|}{\sqrt{M}} + 1.
\end{equation}

We now bound the size of $T_{2M}$. First, it follows from the elementary inequality
\[
  \cos(2\pi y) \ge 1-2\pi^2\|y\|^2 \ge 1 - 20\|y\|^2,
\]
which holds for all $y\in \mathbb{R}$, that $T_{2M} \subseteq T'$, where 
\[
  T' := \left\{r \in \F_p : \sum_{j=1}^n \cos(2 \pi r a_j/p) \geq n - 40M\right\}.
\]
Second, by Markov's inequality,
\[
  |T'| \le \frac{1}{\big(n-40M\big)^{2k}} \cdot \sum_{r \in T_M'}\left(\sum_{j=1}^n\cos(2\pi ra_j/p)\right)^{2k}.
\]
Third, by our assumption that $80Mk \le n$ and since the sequence $\big(1-1/(2k)\big)^{2k}$ is increasing,
\[
  (n-40M)^{2k} = \left(1 - \frac{40M}{n}\right)^{2k} \cdot n^{2k} \ge \left(1 - \frac{1}{2k}\right)^{2k} \cdot n^{2k} \ge \frac{n^{2k}}{\sqrt{2}}
\]
Fourth, since $T' \subseteq \F_p$ and $2\cos(2\pi r a_j / p) = e_p(ra_j) + e_p(-ra_j)$, we also have
\begin{align*}
  \sum_{r \in T'} \left( \sum_{j=1}^n \cos(2 \pi r a_j / p)\right)^{2k} &\le \sum_{r \in \F_p} \left( \sum_{j=1}^n \big(e_p(ra_j) + e_p(-ra_j)\big)/2\right)^{2k}\\
                                                                          &= \frac{1}{2^{2k}} \sum_{(\sigma_1,\dotsc, \sigma_{2k})\in\{\pm 1\}^{2k}} \sum_{j_1, \dots, j_{2 k}} \sum_{r \in F_p} e_p\left( r \sum_{\ell=1}^{2 k} \sigma_\ell a_{j_\ell}\right) \\
                                                                          &= \frac{1}{2^{2k}}\sum_{(\sigma_1,\dots, \sigma_{2k})\in\{\pm 1\}^{2k}} \sum_{j_1,\dots, j_{2k}} p \cdot \delta_0\left(\sum_{\ell=1}^{2k} \sigma_\ell a_{j_\ell}\right)\\
                                                                          &= \frac{p R_k(\boldsymbol{a})}{2^{2k}}.
\end{align*}
Thus, we may conclude that
\begin{equation}
  \label{eqn:halasz-moment}
  |T_M| \le |T'| \le \frac{\sqrt{2} p R_k(\boldsymbol{a})}{2^{2k}n^{2k}}
\end{equation}

Finally, combining this with \cref{eqn:halasz-prelim,eqn:halasz-integral,eqn:halasz-cd,eqn:halasz-moment}, we get, %for any $0\leq \mu \leq 1/2$, 
\begin{align*}
  \max_{q\in\F_{p}}\Pr\left(\sum_{j=1}^n\epsilon_ja_j = q\right) & \leq \frac{1}{p}\int_{0}^{M} |T_{2t}| e^{-t}\,dt+\frac{1}{p}\int_{M}^\infty pe^{-t} \, dt\\
 & \le \frac{1}{p}\int_0^{M} \left(\frac{\sqrt{2t} \cdot |T_M|}{\sqrt{M}}+1\right)e^{-t}\,dt+e^{-M} \\
 & \le \frac{|T_{2M}|}{p\sqrt{M}} \cdot \int_0^{M} \sqrt{t}e^{-t} \, dt+ \frac{1}{p} \int_0^{M} e^{-t}\, dt +e^{-M}\\
 & \leq \frac{|T_{2M}|}{p\sqrt{M}} \cdot C' + \frac{1}{p}+e^{-M}\\
                                                                 & \le \frac{CR_{k}(\boldsymbol{a})}{2^{2k}n^{2k}\sqrt{M}} + \frac{1}{p}+e^{-M},
\end{align*}
as desired.
\end{proof}

\end{document}